\newcolumntype{P}[1]{>{\centering\arraybackslash}p{#1}}
\newtheorem{assumption}{Assumption}
\begin{document}

\title{Escaping Saddle Points with Stochastically Controlled Stochastic Gradient Methods}

\author{\name Guannan Liang \email guannan.liang@uconn.edu
       \AND
       \name Qianqian Tong \email qianqian.tong@uconn.edu
       \AND
       \name Chunjiang Zhu \email chunjiang.zhu@uconn.edu
       \AND
       \name Jinbo Bi \email jinbo.bi@uconn.edu\\
       \addr University of Connnecticut\\ Storrs, CT 06269, USA}
\editor{ }
\maketitle

\begin{abstract}
	Stochastically controlled stochastic gradient (SCSG) methods have been proved to converge efficiently to first-order stationary points which, however, can be saddle points in nonconvex optimization. It has been observed that a stochastic gradient descent (SGD) step introduces anistropic noise around saddle points for deep learning and non-convex half space learning problems, which indicates that SGD satisfies the correlated negative curvature (CNC) condition for these problems. {   Therefore, we propose to use a separate SGD step to help the SCSG method escape from strict saddle points, resulting in the CNC-SCSG method. The SGD step plays a role similar to noise injection but is more stable. We prove that the resultant algorithm converges to a second-order stationary point with a convergence rate of $\tilde{O}( \epsilon^{-2} log( 1/\epsilon))$ where $\epsilon$ is the pre-specified error tolerance. This convergence rate is independent of the problem dimension, and is faster than that of CNC-SGD. 
	A more general framework is further designed to incorporate the proposed CNC-SCSG into any first-order method for the method to escape saddle points. Simulation studies illustrate that the proposed algorithm can escape saddle points in much fewer epochs than the gradient descent methods perturbed by either noise injection or a SGD step. }
\end{abstract}
\begin{keywords}
nonconvex optimization, SCSG methods, escaping saddle points
\end{keywords}

\section{Introduction}

In this paper, the problem of interest is the nonconvex finite-sum problem of the form
\begin{equation}
\vspace{-.2cm}\label{finite_sum}
\min_{x \in \mathbb{R}^d} f( x ) := \frac{1}{n} \sum_{z=1}^{n} f_z( x),
\end{equation}
where both $f$ and $f_z$ ($z \in [n] :=\{1,2,\dots,n\}$) may be nonconvex, and their gradients and Hessians are Lipschitz continuous. Problem (\ref{finite_sum}) plays an essential role in many machine learning methods especially in the training of deep neural networks.  
The stochastic gradient descent (SGD) method and its variants are commonly used  for solving (\ref{finite_sum}). However, finding global or local minimum of $f$ generally is NP-hard \citep{anandkumar2016efficient}. {  Hence, people turn to find sub-optimal solutions for Problem (\ref{finite_sum}), such as first-order stationary points \citep{nesterov2013introductory}  or second-order stationary points \citep{ge2015escaping}.} 

An $\epsilon$-first-order stationary point $x$ can be guaranteed by the gradient descent (GD) method and its variants, such that for an arbitrarily small $\epsilon>0$,
\begin{equation} \label{1st_stationary_condition}
\| \nabla f( x ) \| \leq \epsilon
\end{equation} 
\citep{nesterov2013introductory,ghadimi2016accelerated,carmon2017convex,reddi2016stochastic,lei2017non}. 
Specifically, for a smooth nonconvex $f$, an $\epsilon$-first-order stationary point can be found by the GD in $\mathcal{O}( \epsilon^{-2})$ iterations, or by the SGD in $\mathcal{O}( \epsilon^{-4} )$ iterations \citep{nesterov2013introductory} where $\epsilon$ is the error tolerance chosen beforehand.
Later, the accelerated gradient descent (AGD) method also achieved a convergence rate $\mathcal{O}( \epsilon^{-2})$ \citep{ghadimi2016accelerated}, and the Guarded-AGD algorithm improved over the AGD algorithm and reached a convergence rate $O(\epsilon^{-\frac{7}{4}})$.
However, $\epsilon$-first-order stationary points can be saddle points or even local maximizers in non-convex optimization \citep{jin2017escape}.   
%A general form of accelerated gradient was proposed by \cite{ghadimi2016accelerated}, which converges to an $\epsilon$-first-order stationary point in $\mathcal{O}( \epsilon^{-2})$ iterations.  Later, Nesterov's accelerated gradient descent method (AGD) was modified to form the Guarded-AGD algorithm \cite{carmon2017convex}.
%It separates the domain of $x$ into two areas: 1) the area where $f$ is locally $\sigma$-strongly convex and ii) the area where $f$ violates $\sigma$-strong convexity. Eventually, Guarded-AGD algorithm reaches a convergence rate of $O(\epsilon^{-\frac{7}{4}})$.
% however, Guarded-AGD is time-consuming for large-scale problem since it need to solve three sub-optimization for calculating negative curvature direction.  

{  Hence, there has been theoretical analysis to find second-order stationary points for Problem (\ref{finite_sum}) \citep{ge2015escaping, jin2017escape,nesterov2006cubic} where an ($\epsilon_g$, $\epsilon_h$)-second-order stationary point $x$ is sought such that:
\begin{equation} \label{2nd_stationary_condition}
\| \nabla f( x ) \| \leq \epsilon_g ~~~\mbox{and}~~~  \nabla^2 f( x )  \succeq -\epsilon_h  I 
\end{equation}  
for some small positive $\epsilon_g$ and $\epsilon_h$, and $I$ is the identity matrix.  If we assume that $f( x )$ has strict saddles (see Definition \ref{def:strictsaddle}), a second-order stationary point is a local minimizer. 
In order to obtain a sub-optimal solution $x$ which satisfies (\ref{2nd_stationary_condition}), some works explicitly calculate the eigenvector direction corresponding to the most negative eigenvalue of the Hessian matrix $\nabla^2 f( x )$ around saddle points, such as the cubic regularized Newton's (CRN) methods \citep{nesterov2006cubic,agarwal2017finding,reddi2018generic,tripuraneni2017stochastic, zhou2018stochastic, curtis2017trust}.The CRN-based methods require an {\em oracle} to compute the search direction at each iteration, where the oracle is difficult to derive and time-consuming to compute. Thus, some other works including this present work develop perturbation-based methods to escape saddle points. 

Some perturbation-based methods rely on the strategy of injecting isotropic noise along the search direction, such as in the perturbed GD (PGD) and perturbed SGD (PSGD) methods \citep{ge2015escaping, jin2017escape,zhang2017hitting}. A recent work observes that the stochasticity in SGD steps induces intrinsic anisotropic noise around saddle points, and hence proposes to utilize this noise by running a separate SGD step to escape saddle points  \citep{daneshmand2018escaping}. The advantage of such a method is that the anisotropic noise does not reduce along the negative curvature around saddle points when the deep neural network (the size of the optimization problem) becomes massive. Thus, this leads to two new methods both based on the assumption called the correlated negative curvature (CNC), namely, the CNC-GD and CNC-SGD methods.

Although the anisotropy of the noise in SGD steps has been extensively studied \citep{zhu2018anisotropic,daneshmand2018escaping,simsekli2019tail}, there are discrepancies in the discussion. \cite{fang2019sharp} assumes dispersive noise structure in SGD, such as the structure of Gaussian noise or uniform ball-shaped noise,  which then supports the explicit uniform ball-shaped noise injection procedure in the PGD method \citep{jin2017escape}. Although this assumption of dispersive (similar to isotropic) noise enables the authors to analyze SGD directly, it deserves a second thought. The variance of  Gaussian noise or uniform ball-shaped noise diminishes to zero along the negative curvature when the problem size increases, but practically the noise induced by SGD steps does not. For example, \cite{daneshmand2018escaping}  experimentally show that  the variance of noise caused by SGD along the negative curvature direction is not influenced by the dimension of deep neural networks. More importantly, \cite{simsekli2019tail} have studied the long tail behavior of noise from SGD in  deep neural networks and shown that the noise in SGD does not follow the behavior of Gaussian noise as well as uniform ball-shaped noise. Hence, we assume in this paper that SGD satisfies the CNC condition which has been theoretically proved for the non-convex half-space learning problems. % Therefore, anisotropic noise structure ( CNC condition ) is more reasonable in deep learning applications and nonconvex half-space learning problems.

%Hence, the assumption in \citep{fang2019sharp} that noise in SGD follows Gaussian distribution is not appropriate,  though it enables the authors to analyze SGD directly.
}

  \begin{center}
\vspace{-.8cm}
  \begin{minipage}[t]{.7\textwidth}
\begin{algorithm}[H]
	\caption {Finding a second-order stationary point }
	\label{alg:escaping_saddle} 	
	\begin{algorithmic}[1]		
		\Require  $\epsilon_g$ and $\epsilon_h$
		\For{$k=0,1,2,...$}
		\State $y^{k} = Gradient$-$Focused$-$Optimizer( x^{k-1})$
		\State $x^{k} = Escaping$-$Saddle$-$Algorithm( y^{k})$
		\If{ ($\epsilon_g$, $\epsilon_h$)-second-order conditions}
		\State Return $x^k$
		\EndIf
		\EndFor		
	\end{algorithmic}	
\end{algorithm}
\end{minipage}
\end{center}

For perturbation-based algorithms, we summarize them into a general framework as in Algorithm  \ref{alg:escaping_saddle}.
%, following the spirit of \cite{reddi2017generic}. 
The procedure $Gradient$-$Focused$-$Optimizer(\cdot)$ in Line 2 can be any algorithm that converges to an $\epsilon$-first-order stationary point, such as, the GD, SGD, and stochastically controlled stochastic gradient (SCSG) methods. The procedure $Escaping$-$Saddle$-$Algorithm(\cdot)$ in Line 3 is for escaping from the region around saddle points (also called the stuck region), typically by taking a negative curvature direction explicitly or implicitly. {   A few prior works attempted to use variance reduction methods in the $Gradient$-$Focused$-$Optimizer(\cdot)$ \citep{xu2017first,allen2017neon2,zhou2018finding} to reach a better convergence rate for second-order stationary points, but they have not been used in the $Escaping$-$Saddle$-$Algorithm(\cdot)$.}{ We are the first work to use variance reduced methods inside $Escaping$-$Saddle$-$Algorithm(\cdot)$ .}

\textbf{Our contributions.} In this paper, we first propose a variance reduced algorithm, which we call CNC-SCSG, to escape strict saddle points by incorporating SGD into SCSG. We show that it converges to a second-order stationary point with a convergence rate faster than that of the CNC-SGD and comparable to that of the CNC-GD, under the assumption of the CNC condition. The computational complexity in terms of the number of IFO calls (see Definition 3) has also been analyzed. Comparing with GD methods, perturbed by either Gaussian noise or one SGD step, our simulation studies show that the CNC-SCSG converges in much fewer epochs than the perturbed GD methods even though they theoretically have comparable convergence rate.
Furthermore, we generalize the CNC-SCSG into a framework that separates the part for escaping saddle points from the main algorithm so that it can be combined with any existing first-order algorithm, providing a mechanism that allows further search for faster convergence. 

\section{Related works}

We briefly review the existing methods for finding second-order stationary points in the following three lines. 

{\bf Perturbation methods.}
A line of research finds second-order stationary points by injecting perturbation when the algorithm gets close to saddle points. The perturbation can be either an explicit isotropic noise injected to the current gradient or search direction \citep{ge2015escaping, jin2017escape,jin2018accelerated,xu2017first, allen2017neon2,allen2017natasha,jin2019stochastic,fang2019sharp,ge2019stabilized,li2019ssrgd},
%\footnote{\cite{fang2019sharp} assumes isotropic noise structure in SGD  which has not been verified, but enables the authors to analyze SGD directly. The assumption corresponds to the key lemma of  perturbation injection procedure in PGD \cite{jin2017escape} and is equivalent to PSGD.}
or a  SGD step introducing intrinsic and anisotropic noise implicitly \citep{daneshmand2018escaping}.

As in the PSGD \citep{ge2015escaping,jin2019stochastic,fang2019sharp}, PGD \citep{jin2017escape} and perturbed AGD (PAGD) \citep{jin2018accelerated} methods , researchers have shown that the stuck region is a ``thin pancake". Thus injecting random noise to gradients can help the algorithm to jump out of the stuck region with high probability. However, because the noise is isotropic, this strategy is inefficient in high dimension. The Neon \citep{xu2017first} and Neon2 \citep{allen2017neon2} methods connected the noise perturbed methods PGD and PSGD to the search methods of negative curvature direction, such as the power method \citep{kuczynski1992estimating} and Oja's method \citep{oja1982simplified}. These methods for the first time successfully separated the $Escaping$-$Saddle$-$Algorithm(\cdot)$ from the main GD process. Because of the separation, the frameworks are easy to incorporate any suitable algorithm for  $Gradient$-$Focused$-$Optimizer(\cdot)$. The overall convergence rate has less dependence on the dimension $d$, i.e. from $poly(d)$ in PSGD  \citep{ge2015escaping} to $poly( log(  d ))$\citep{xu2017first,allen2017neon2}. Nevertheless, as the convergence rate still depends on $d$, it may cause issues in deep learning models with massive network parameters. Recently, \citep{daneshmand2018escaping} observed that SGD works very well in deep learning applications, and experimentally showed that the anisotropic noise created by SGD has a high variance along the negative curvature direction. The variance does not diminish with the increase in the size of deep neural networks. This observation implies that SGD satisfies the CNC condition (see Definition \ref{def:CNC-Condition}). Hence, a single step of SGD can be used to escape saddle points, which can also remove the dependency on problem dimension as well. %Perturbing GD and SGD by a separate SGD step (forming the CNC-GD and CNC-SGD method, resp.) makes the algorithms' convergence rate no longer dependent on $d$. See Table in Appendix for a summary of these algorithms.

{ \bf Cubic regularized Newton's (CRN) methods.}
Another line of studies for escaping saddle points is based on the CRN method \citep{nesterov2006cubic} and its variants \citep{agarwal2017finding,reddi2018generic,tripuraneni2017stochastic, zhou2018stochastic, curtis2017trust}.
These methods use second-order information directly. The CRN method \citep{nesterov2006cubic} can converge to an ($\epsilon, \sqrt{\epsilon})$-second-order stationary point in $O(\epsilon^{-\frac{3}{2}})$ steps, if there is an oracle which can return an update direction $h$ at each iteration to minimize the cubic function $m(h) = g^Th + \frac{1}{2}h^THh + \frac{\rho}{6}\| h \|^3$, where $g=\nabla f( x ) $, $H = \nabla^2 f( x )$ and $\rho$ is the second-order smoothness parameter of $f$. The FastCubic method in \citep{agarwal2017finding} improved the CRN method and achieved a better convergence rate. Its stochastic version has also been proposed by computing stochastic gradient and Hessian using two mini-batches respectively at each iteration \citep{tripuraneni2017stochastic}. The SVRCubic method \citep{zhou2018stochastic} introduced the variance reduction technique to the stochastic CRN method to reach the  convergence rate of its deterministic version. 

 \textbf{Stochastic variance reduction methods.} For convex optimization, variance reduction methods have been extensively studied, such as, the stochastic variance reduced gradient (SVRG) \citep{johnson2013accelerating}, stochastically controlled stochastic gradient (SCSG) \citep{lei2016less}, and stochastic average gradient (SAGA) \citep{defazio2014saga} methods, and they are well-known for faster convergence rates. In non-convex optimization, variance reduction methods have been proved to converge to $\epsilon$-first-order stationary points \citep{reddi2016stochastic,lei2017non}. Some of these methods have been used in the $Gradient$-$Focused$-$Optimizer(\cdot)$ step of Algorithm \ref{alg:escaping_saddle} to find second-order stationary points \citep{xu2017first,allen2017neon2,zhou2018finding}.

%\vspace{-0.1in}
\section{Preliminaries}
%We are interested in stochastic optimization problems of the the form:
%$$\min_{x \in R^d} f( x ) := \frac{1}{n} \sum_{z=1}^{n} f_z( x),$$
%where $z$ represents for data points and $f$ is nonconvex. Our goal is to find $( \epsilon_g, \epsilon_h)$-second-order stationary points for above problem.

%\subsection{Notations}
We use  uppercase letters, e.g. $A$, to denote matrices and lowercase letters, e.g. $x$, to denote vectors. We use $\|\cdot\|$ to denote the 2-norm for vectors and  the spectral norm for matrices, and $\lambda_{min}(\cdot)$ to denote the minimum eigenvalue. For two matrices $A$ and $B$, $A\succeq B$ iff $A-B$ is positive semi-definite. In this paper, the notation $O(\cdot)$ is used to hide constants which do not rely on the setup of the problem parameters and the notation $\tilde{O}(\cdot)$ is used to hide all $\epsilon$-independent constants. The operator $E[\cdot]$ represents taking the expectation over all randomness, $[n]$ denotes the integer set $\{1, ..., n\}$, $\nabla f( \cdot )$, $\nabla f_I( \cdot)$ and $\nabla f_z ( \cdot )$ are the full gradient, the stochastic gradient over a mini-batch $I\subset [n]$ and the stochastic gradient over a single training example indexed by $z \in [n]$, respectively. We also assume that the probability of observing $z$ given the model parameter $x$, $ P( z | x) \propto exp^{ -f_z( x)} $. 
%In the following, we formally define notations we will use in the derivations.
%\subsection{Definitions}
%\begin{definition}\label{def:smooth}
%	{(Smooth function)} Function $f: \mathbb{R}^d \rightarrow \mathbb{R}$ is $L-$ smooth if
	%$$f( y ) \leq f( x ) + \langle \nabla f( x ), y-x\rangle + \frac{L}{2}\| y -x \|^2_2, \forall x, y \in \mathbb{R}^d .$$
%	$\| \nabla f_z( x_1 ) - \nabla f_z( x_2 ) \| \leq L \| x_1 - x_2 \|, \forall x_1, x_2 \in \mathbb{R}^d.$
%	where index $z \in [n]$.
%\end{definition}
%\begin{definition}\label{def:Hessianlipschitz}
%	{(Hessian Lipschitz)}
%	A twice-differentiable function $f(x): \mathbb{R}^d \rightarrow \mathbb{R}$ is $\rho$-Hessian Lipschitz  if
%	$\| \nabla^2 f_z( x_1 ) - \nabla^2 f_z( x_2 ) \| \leq \rho \| x_1 - x_2 \|, \forall x_1, x_2 \in \mathbb{R}^d.$
%	where index $z \in [n]$.
%\end{definition}
%\begin{definition}\label{def:boundedgradient}
%	{(Bounded Gradient)} 
%	Function f has l-bounded gradient if
%	$\| \nabla f_z( x ) \| \leq l, \forall x \in \mathbb{R}^d, $
%	where index $z \in [n]$.
%\end{definition}
%\begin{remark}
%	Full gradient and mini-batch gradient of function $f$ are $l$-bounded too, i.e. 
%	$$\| \nabla f_I( x ) \| \leq l, \forall x \in \mathbb{R}^d, $$ 
%	where $I\subseteq [n]$ 
%\end{remark}
\begin{definition}\label{def:strictsaddle}{(Strict saddle)}
	A twice differential function f has strict saddle if all its local minima have $\nabla^2 f( x ) \succ 0$ and all its other stationary points satisfy that $\lambda_{min}( \nabla^2 f( x )) < 0$.
\end{definition}
\begin{definition}\label{def:CNC-Condition}
	{(CNC-condition)}
	A differentiable  finite-sum function $f(x): \mathbb{R}^d \rightarrow \mathbb{R}$ satisfies the CNC condition if $\exists \tau \geq 0, \text{s.t.}~ E[\langle\mathbf{v_x}, \nabla f_z(x)\rangle^2] \geq \tau,$	 $\forall x \in \{ x: \lambda_{min}(\nabla^2 f(x)) < 0 \}$ and $z\in [n]$, where $\mathbf{v_x}$ is the eigenvector corresponding to the minimum eigenvalue $\lambda_{min}( \nabla^2 f(x ))$ and $E[\cdot]$ is the expectation taken over the random sample $z$.
\end{definition}
%\begin{definition}\label{def}
%A differentiable  finite-sum function $f(x): \mathbb{R}^d \rightarrow \mathbb{R}$ satisfies
%\renewcommand{\labelenumii}{\Romen{enumii}}
%\begin{enumerate}
%    \item {(Strict Saddle)}
%	 $f(x)$ is strict saddle if all its local minima have $\nabla^2 f( x ) \succ 0$ and all its other stationary points satisfy $\lambda_{min}( \nabla^2 f( x )) < 0$.
%    \item {(CNC-Condition)}
%	 $f(x)$ satisfies the Correlated Negative Curvature (CNC) condition if $\exists \tau \geq 0, \text{s.t.}~ E[\langle\mathbf{v_x}, \nabla f_z(x)\rangle^2] \geq \tau,$
%	 $\forall x \in \{ x: \lambda_{min}(\nabla^2 f(x)) < 0 \}$ and $z\in [n]$, where $\mathbf{v_x}$ is the eigenvector corresponding to the minimum eigenvalue $\lambda_{min}( \nabla^2 f(x ))$ and $E[\cdot]$ is the expectation taken over the random sample $z$.
%\end{enumerate}
%\end{definition}
\begin{definition}
    {(Incremental First-order Oracle (IFO)\citep{agarwal2014lower})} An IFO  is a subroutine  that takes a point $x\in \mathbb{R}^d$ and an index $z \in [n]$ and returns a pair ($f_z(x)$, $\nabla f_z(x)$) .
\end{definition}
\begin{definition}
	(Geometric distribution) A random variable $N$ is said to follow a geometric distribution $Geom( \gamma)$ for some $\gamma >0$, denoted as $N \sim Geom( \gamma)$, if  $N$ is a non-negative integer and the probability	$P( N = k ) = ( 1 - \gamma) \gamma^k$	for all $ k = 0, 1, \cdots$. It holds that $E[N] = \frac{\gamma}{1-\gamma}$.
\end{definition}
In this paper, we assume that the function $f(x)$ is lower-bounded by a constant $f^*$, which is the optimal value of the objective. As commonly used in the related works on escaping saddle points, we also assume that the gradient and Hessian of $f_z$ are locally stable (Assumption 1 (1)-(2)), and the variance of the stochastic gradients is bounded from above (Assumption 1 (3)).

\begin{assumption}\label{assumption4f}
The twice differentiable function $f_z(x )$, $z \in [n]$, satisfies: 
\begin{enumerate}
\item  $L$-Lipschitz gradient, i.e., $\| \nabla f_z( x_1 ) - \nabla f_z( x_2 ) \| \leq L \| x_1 - x_2 \|$, $\forall x_1, x_2 \in \mathbb{R}^d$;\\
\item  $\rho$-Lipschitz Hessian, i.e., $\| \nabla^2 f_z( x_1 ) - \nabla^2 f_z( x_2 ) \| \leq \rho \| x_1 - x_2 \|$, $\forall x_1, x_2 \in \mathbb{R}^d$; \\
\item  $\exists~l\geq 0$, such that, $\| \nabla f_z( x ) \| \leq l, \forall x \in \mathbb{R}^d$, and $\forall z \in [n]$.
\end{enumerate}
\end{assumption}
We further exploit the observation given in \citep{daneshmand2018escaping} that SGD steps intrinsically induce anisotropic noise and the variance of noise along negative curvature direction does not decrease with the increase of deep neural network dimension size. We emphasize that the original CNC-condition in \citep{daneshmand2018escaping} does not necessarily hold for the entire $x$ domain. It only needs to hold at the points around strict saddle points for SGD to jump out of stuck regions.
\begin{assumption}\label{CNC_assumption}
	In Problem (\ref{finite_sum}), we assume that $f( x )$ satisfies the CNC-condition at points in the stuck regions.
\end{assumption}
The CNC-condition can be regarded as instability of the iteration dynamics in SGD around strict saddle points.
Unlike the methods of injecting isotropic noise to the first order algorithms as in \cite{ge2015escaping, jin2017escape, jin2018accelerated,xu2017first,allen2017neon2} which have a convergence rate dependent on the dimension $d$ (i.e., either $poly( d )$ or $poly \log( d )$), the most important advantage of using the anisotropic noise induced by SGD is that the convergence rate no longer depends on the dimension $d$.  When optimizing deep neural networks with millions of parameters, the anisotropic noise provided by SGD can have a significant advantage over injected isotropic noise. 
%Further justifications for assumption \ref{CNC_assumption} are in Appendix 4.
\begin{remark}
	Around saddle points, the variance-covariance matrix of the stochastic gradient $\nabla f_z( x)$ is approximately equal to the empirical Fisher's information matrix (see Appendix E). In practice, under the assumptions that the sample size $n$ is large enough and the current $x$ is not too far away from the ground truth parameter, Fisher's information matrix $E_{P(z|x)}[\nabla^2 \log( P( z|x))]$ approximately equals to the Hessian matrix $H$ of $f(\cdot)$ at $x$ (Note that taking logarithm of the likelihood $\prod_{z=1}^{n}P(z|x)$ yields a finite sum $f$). Thus, the intrinsic noise in SGD is naturally embedded into the Hessian information of $f$ \citep{zhu2018anisotropic}. This explains the anisotropic behavior of the noise in SGD.
\end{remark}

\section{The CNC-SCSG algorithm}

In this section, we present a novel algorithm CNC-SCSG (Algorithm \ref{alg:SVRG_SGD}), which utilizes SCSG and SGD to escape strict saddle points and is able to find ($\epsilon$, $\sqrt{\rho}\epsilon^{\frac{2}{5}}$)-second-order stationary points.
The SCSG algorithm is a member of the SVRG family, which was first proposed in \cite{lei2016less} and showed competitive time complexity in convex optimization. Later it was extended to non-convex optimization \citep{lei2017non}. The original SVRG method computes a full gradient before starting an inner loop which typically contains $O(n)$ iterations. The SCSG differs from the SVRG mainly because %(i) rather than computing a full gradient, the SCSG calculates gradients for a mini-batch $B\subseteq [n]$ of the batch size $|B|$, and (ii) 
the number of iterations in the inner loop is stochastically determined by an integer sampled from $Geom(\gamma)$ where $\gamma$ is related to the size $b$ of the mini-batch $I$ used in the inner loop.

\begin{center}
\vspace{-1.cm}
\begin{minipage}[t]{.7\textwidth}
\begin{algorithm}[H]
	\caption {CNC-SCSG}
	\label{alg:SVRG_SGD} 
	%\begin{multicols}{2}
	\begin{algorithmic}[1]
		\Require  $\mathcal{K}_{thres} $, stepsizes $\eta$, $r$, accuracy $\epsilon$ and epoch $T$
		\State $t_{noise} = 0$
		%\State $T = \tilde{O}( \epsilon^{-2} log( 1/\epsilon))$
		\For{$k=0,1,2,.., T$}
		\State $\tilde{\mu} = 1/n \sum_{i=1}^{n} \nabla f_i(\tilde{x}^k)$
		\If { $t_{noise} \geq \mathcal{K}_{thres}$ $\&\&$  {$\|\tilde{\mu}\| \leq \epsilon$ } \label{Line 9}}
		\State Randomly pick $i \in \{1,\dots,n\}$ 
		\State $\tilde{x}^k  = \tilde{x}^k - r( \nabla f_{i}(\tilde{x}^k ) ) $
		\State $t_{noise} = 0 $
		\State $\tilde{\mu} = 1/n \sum_{i=1}^{n} \nabla f_i(\tilde{x}^k)$
		\EndIf
		\State $x_0^k = \tilde{x}^k$
		\State Generate $N_k \sim $ Geom( $n/(n+b)$) 
		\For{$t=1,2,\dots, N_k$}
		\State Sample $I_t \subset \{1,...,n\}$,where $|I_t|=b$ 
		\State $x_t^k = x_{t-1}^k - \eta (\nabla f_{I_t}(x_{t-1}^k) - \nabla f_{I_t}( \tilde{x}^k )  + \tilde{\mu} ) $
		\EndFor
		\State set $\tilde{x}^{k+1} = x_t^k  $
		\State $t_{noise} = t_{noise} + 1 $
		\EndFor
	\end{algorithmic}
	%\end{multicols}
\end{algorithm}
\end{minipage}
\end{center}

As shown in Algorithm \ref{alg:SVRG_SGD}, our algorithm consists of an outer loop (Lines 2-18) and an inner loop (Lines 12-15). In each iteration of the outer loop, a full gradient is calculated (Line 3). Following SCSG, an integer is sampled from a geometric distribution and then used as the number of iterations in the inner loop (Line 11). Variance reduction steps (Lines 13 - 14) are commonly used in the family of SVRG. More importantly, Lines 4-9 are the steps for escaping saddle points where Line 4 is used to determine if the $\epsilon$-first-order stationary condition ($\|\tilde{\mu}\| \leq \epsilon$) is satisfied; if yes, and the escaping-saddle-point steps have not been invoked in the past $\mathcal{K}_{thres}$ epochs, perturbation by one step of SGD (Line 6) is taken. $t_{noise}$ is a counter counting the number of epochs after a perturbation and the threshold parameter $\mathcal{K}_{thres}$ is algorithm-specific (to be discussed in proofs). Theoretically, a parameter $ f_{thres}$ 
 specifying the desirable decrease of function value should be provided to terminate the algorithm. In other words, we run $\mathcal{K}_{thres}$ epochs after the SGD perturbation step, and then check if the objective $f$ decreases less than $f_{thres}$; if yes, terminate and the algorithm arrives at an ($\epsilon$, $\sqrt{\rho}\epsilon^{\frac{2}{5}}$)-second-order stationary point. In practice, there is no need to specify the value of $f_{thres}$, and we can terminate the algorithm when $f$ does not decrease for a couple of epochs.

\noindent\textbf{Convergence analysis.}
We first prove in Theorem \ref{mainTheroem} that  the CNC-SCSG (Algorithm \ref{alg:SVRG_SGD}) converges to a second-order stationary point of Problem (\ref{finite_sum}) within a finite number of epochs with high probability.
We then derive the convergence rate and  computational complexity in Theorem \ref{cor2}.
\begin{theorem} \label{mainTheroem}
	Under Assumptions \ref{assumption4f} and \ref{CNC_assumption}, Algorithm \ref{alg:SVRG_SGD} obtains an ($\epsilon$, $\sqrt{\rho}\epsilon^{\frac{2}{5}}$)-second-order stationary point within 
	$ T = \frac{b}{n}\frac{288 C C_1 l^4 ( f( x_0) - f^*)}{C_2\delta \eta^2 \tau^2 \epsilon^2} \log( \frac{1}{ \sqrt{\rho}\epsilon^{\frac{2}{5}}}) $ epochs with probability at least $1-\delta$, where $\eta L = \gamma (\frac{b}{n})^{\frac{2}{3}}$, $b\leq \frac{n}{8} $ ,  $C = b{ [\frac{( b - \frac{\eta^2 L^2 n}{b} - \eta n )( 1 - \eta L)}{ 1 + 2 \eta} - \frac{L^3\eta^2 n}{2b}] }^{-1}$,  $\gamma$  and $C_1$ are constants independent of $\epsilon$, and $C_2 = min( \frac{1 }{2},  \frac{\eta }{C L})$. 
\end{theorem}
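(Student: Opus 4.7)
The plan is to decompose epochs into two regimes -- ``descent'' epochs in which the current iterate is still far from first-order stationarity, and ``escape'' epochs triggered when the full gradient norm falls below $\epsilon$ -- and to bound the total count via an amortized potential argument on $\Phi_k := f(\tilde{x}^k)-f^\star$.

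First I would establish a per-epoch descent lemma for SCSG under Assumption \ref{assumption4f}. Following the nonconvex SCSG analysis of Lei et al., an inner loop of length $N_k\sim\mathrm{Geom}(n/(n+b))$ combined with $L$-smoothness should give an expected one-epoch decrease of the form
\begin{equation*}
E[\,f(\tilde{x}^{k+1})-f(\tilde{x}^k)\mid \tilde{x}^k\,]\;\le\;-\,\tfrac{1}{C}\,\|\nabla f(\tilde{x}^k)\|^2,
\end{equation*}
where $C$ is precisely the constant named in the theorem; the choice $\eta L=\gamma(b/n)^{2/3}$ together with $b\le n/8$ keeps the bracketed denominator positive. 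Consequently, as long as $\|\tilde{\mu}\|=\|\nabla f(\tilde{x}^k)\|>\epsilon$, each such epoch loses at least $\epsilon^2/C$ in potential, so the number of pure descent epochs is $\mathcal{O}(C(f(x_0)-f^\star)/\epsilon^2)$ in expectation and, via Markov, $\mathcal{O}(C(f(x_0)-f^\star)/(\delta\epsilon^2))$ with probability $1-\delta/2$.

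Second, and this is the heart of the argument, I would prove an escape lemma covering what happens each time the test on Line 4 fires. Suppose $\tilde{x}^k$ satisfies $\|\nabla f(\tilde{x}^k)\|\le\epsilon$ but $\lambda_{\min}(\nabla^2 f(\tilde{x}^k))<-\sqrt{\rho}\,\epsilon^{2/5}$; otherwise the second-order condition already holds and the algorithm terminates. The SGD kick $\tilde{x}^k\mapsto\tilde{x}^k-r\nabla f_i(\tilde{x}^k)$ injects, by Assumption \ref{CNC_assumption}, an expected squared component of size at least $r^2\tau$ along the negative-curvature eigenvector $v_x$. I would then track the coupled pair consisting of the signed $v_x$-coordinate of $\tilde{x}^{k+j}-\tilde{x}^k$ and its orthogonal remainder over the next $\mathcal{K}_{thres}$ epochs. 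Using the $\rho$-Lipschitz Hessian to freeze the local quadratic model, the SCSG variance bound to dominate the orthogonal drift, and Assumption \ref{assumption4f}(3) to bound the per-step displacement by $l$, the $v_x$-coordinate should grow geometrically at rate roughly $1+\Theta(\eta\sqrt{\rho}\,\epsilon^{2/5})$ per epoch, which after $\mathcal{K}_{thres}=\Theta(\log(1/\sqrt{\rho}\epsilon^{2/5})/(\eta\sqrt{\rho}\epsilon^{2/5}))$ epochs drives the iterate out of the quadratic neighbourhood. An ``improve-or-localize'' argument then gives a deterministic decrease
\begin{equation*}
f(\tilde{x}^{k+\mathcal{K}_{thres}})-f(\tilde{x}^k)\;\le\;-\,f_{thres},\qquad f_{thres}\;=\;\Theta\!\bigl(C_2\,\eta^2\,\tau^2\,\epsilon^2 / (C_1\,l^4)\bigr),
\end{equation*}
with failure probability at most $\delta/(2T)$ per escape attempt -- a Chebyshev step on the $v_x$ coordinate plus Markov on the orthogonal part. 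A union bound over at most $(f(x_0)-f^\star)/f_{thres}$ escape attempts contributes at most $\delta/2$ to the total failure probability.

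Finally I would assemble the two parts. Every epoch either is a pure descent epoch or belongs to one of the $(f(x_0)-f^\star)/f_{thres}$ post-perturbation windows of length $\mathcal{K}_{thres}$; adding, the epoch count is bounded by $\mathcal{K}_{thres}(f(x_0)-f^\star)/f_{thres}$ plus the descent term, and substituting the values above together with $\eta L=\gamma(b/n)^{2/3}$ yields exactly the claimed
\begin{equation*}
T\;=\;\tfrac{b}{n}\cdot\tfrac{288\,C\,C_1\,l^4\,(f(x_0)-f^\star)}{C_2\,\delta\,\eta^2\,\tau^2\,\epsilon^2}\,\log\!\bigl(1/\sqrt{\rho}\,\epsilon^{2/5}\bigr).
\end{equation*}
The main obstacle I anticipate is the escape lemma: one must show that the variance-reduced SCSG updates do not wash out the anisotropic SGD kick before the negative-curvature coordinate grows, while simultaneously obtaining $f_{thres}=\Theta(\epsilon^2)$ rather than a worse polynomial in $\epsilon$. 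That scaling is what ultimately turns $T=\mathcal{K}_{thres}/f_{thres}$ into the $\tilde{\mathcal{O}}(\epsilon^{-2}\log(1/\epsilon))$ rate advertised in the abstract, and it requires the SCSG mini-batch variance bound (which decays like $b^{-1}$) to be married carefully with the CNC lower bound $\tau$ on the projection of $\nabla f_i$ onto $v_x$.
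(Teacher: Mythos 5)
Your two-regime split (descent epochs vs.\ escape windows) captures two of the three cases the paper analyzes, and your large-gradient lemma and escape lemma do match the paper's Lemmas~\ref{decrease_LargeGrad} and~\ref{eigenvalue_m} in spirit. However, there is a genuine gap: you have no treatment of the third regime, where $\|\nabla f(\tilde{x}^k)\|\leq\epsilon_g$ \emph{and} $\lambda_{\min}(\nabla^2 f(\tilde{x}^k))>-\epsilon_h$. The algorithm cannot observe the Hessian, so Line 4 fires and the SGD kick plus $\mathcal{K}_{thres}$ SCSG epochs run even when the iterate is already a second-order stationary point. At such a point your escape lemma gives nothing (there is no negative curvature to amplify), and the SCSG epochs can make $f$ \emph{increase}. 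Without a bound on that increase, your accounting ``every epoch is descent or part of a successful escape window, so there are at most $(f(x_0)-f^\star)/f_{thres}$ windows'' breaks down: the potential $\Phi_k$ is no longer monotone, and a bad stretch near a SOSP can silently refill the budget. The paper closes this with a dedicated lemma showing $E[f(\tilde{x}^k)-f(\tilde{x}^{k-1})]\leq\tfrac{5l^2\gamma^2}{L}(b/n)^{1/3}$ in this regime, then chooses $g_{thres}\geq\tfrac{10l^2\gamma^2}{L\delta}(b/n)^{1/3}$ so the increase is dominated, and combines all three cases through a telescoping bound on $\tfrac{1}{T}\sum_t P_t$ where $P_t=\Pr[\tilde{x}^t\text{ is not a SOSP}]$, rather than through your Markov-plus-union-bound count. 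That $E[R]\geq 1-\delta$ device is what lets the paper tolerate epochs in which the potential goes up; your direct count does not.

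A secondary, smaller issue: your claimed $f_{thres}=\Theta(C_2\eta^2\tau^2\epsilon^2/(C_1 l^4))$ has the wrong $\epsilon$ exponent. From the paper's constraints $f_{thres}\leq\eta\tau r\epsilon_h^2/(12l\rho C)$ with $r\lesssim\tau\epsilon_h^2/(\rho l^3)$ one gets $f_{thres}=\Theta(\eta\tau^2\epsilon_h^4/(l^4\rho^2 C))$, i.e.\ $\Theta(\epsilon_h^4)=\Theta((\rho\epsilon)^{8/5})$, not $\Theta(\epsilon^2)$. With $\mathcal{K}_{thres}=\Theta(\log(1/\epsilon_h)/(\eta\epsilon_h))$ this still recovers $T=\Theta(\epsilon_h^{-5}\log(1/\epsilon_h))=\Theta(\epsilon^{-2}\log(1/\epsilon))$, so the final rate you wrote is right, but the intermediate scaling as stated would yield $\epsilon^{-12/5}$. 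You should carry $\epsilon_h=(\rho\epsilon)^{2/5}$ explicitly through $f_{thres}$ and $\mathcal{K}_{thres}$ before substituting.
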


The proof of Theorem \ref{mainTheroem} is sketched in Section \ref{proofsketch} and its complete proof is provided in Appendix B.
\begin{theorem}\label{cor2}
Under the same assumptions of Theorem \ref{mainTheroem}, the convergence rate is $\tilde{O}( \epsilon^{-2} log( 1/\epsilon))$. If we further require $b = \frac{n}{C_4}$, where $C_4 \geq 8$ is a constant and is independent of $n$, the expectation of overall computational complexity (i.e., the number of IFO calls) is 
		 $O( n\epsilon^{-2} log( 1/\epsilon))$.
\end{theorem}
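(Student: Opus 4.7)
The plan is to read off the $\epsilon$-dependence of the epoch bound proved in Theorem~\ref{mainTheroem}, separately bound the expected IFO cost per outer epoch of Algorithm~\ref{alg:SVRG_SGD}, and multiply the two.

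For the convergence rate, starting from
\[ T = \frac{b}{n}\frac{288 C C_1 l^4 (f(x_0) - f^*)}{C_2 \delta \eta^2 \tau^2 \epsilon^2} \log\bigl(1/(\sqrt{\rho}\epsilon^{2/5})\bigr), \]
I would observe that under the coupling $\eta L = \gamma(b/n)^{2/3}$ together with $b \leq n/8$, each of the quantities $C, C_1, C_2, \eta, \tau, l, \rho, \delta, b/n$, and $f(x_0)-f^*$ is independent of $\epsilon$. The only $\epsilon$-dependent pieces are $1/\epsilon^2$ and $\log(1/(\sqrt{\rho}\epsilon^{2/5})) = \tfrac{2}{5}\log(1/\epsilon) + O(1)$, which together give $T = \tilde{O}(\epsilon^{-2}\log(1/\epsilon))$ directly.

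For the IFO complexity I would tally the expected cost of a single outer epoch. Line~3 is a full-gradient pass of cost $n$; the saddle-escape branch on Lines~4--9 contributes at most one SGD step and one additional full-gradient recomputation, i.e.\ at most $n+1$ extra calls per epoch; the inner loop on Lines~12--15 runs for $N_k \sim \mathrm{Geom}(n/(n+b))$ iterations, each using $2b$ calls for the two mini-batch gradients. Since a $\mathrm{Geom}(\gamma)$ variable has mean $\gamma/(1-\gamma)$, we have $E[N_k] = n/b$, hence an expected inner-loop cost of $2b\cdot(n/b) = 2n$. Summing the three contributions, the expected per-epoch cost is $O(n)$.

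Fixing $b = n/C_4$ with $C_4 \geq 8$ is consistent with $b \leq n/8$ and makes both $b/n$ and, via the coupling, $\eta$ dimension-independent constants. A brief algebraic check shows that every term in the bracket defining $C^{-1}$ is either a fixed constant or scales as $\Theta(n)$, with the dominant $\Theta(n)$ piece outweighing the subtractions, so that the outer factor $b = \Theta(n)$ and the bracket $\Theta(n)$ cancel and leave $C = \Theta(1)$ independent of $n$. The expression for $T$ therefore keeps its $\tilde{O}(\epsilon^{-2}\log(1/\epsilon))$ form, and multiplying by the per-epoch cost $O(n)$ yields the claimed $O(n\epsilon^{-2}\log(1/\epsilon))$ expected IFO complexity. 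The one place needing care is this last algebraic check: one must confirm that the bracket inside $C^{-1}$ remains positive as $n$ grows, which pins down an admissible range for the constant $\gamma$ in $\eta L = \gamma(b/n)^{2/3}$ (roughly $\gamma$ small enough compared to $L/C_4^{1/3}$), so that $C$ does not blow up with the problem size.
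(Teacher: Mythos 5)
Your argument is correct and follows essentially the same route as the paper: read off the $\epsilon$-dependence of $T$ from Theorem~\ref{mainTheroem}, tally an expected $O(n)$ IFO cost per outer epoch (full gradient, escape branch, and inner loop with $E[N_k]=n/b$), and multiply. The paper states ``convergence rate'' in terms of the expected number of inner iterations $E[\sum_k N_k] = \frac{n}{b}T$ rather than epochs $T$, but since $n/b$ is $\epsilon$-independent these coincide up to $\tilde{O}(\cdot)$; your additional check that $C=\Theta(1)$ under $b=n/C_4$ (which requires $\gamma$ small enough that the bracket inside $C^{-1}$ stays positive) is a useful sanity check the paper leaves implicit rather than a different proof.
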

Provided that the constants $C$, $C_1$, $C_2$ and $\gamma$ in Theorem \ref{mainTheroem} are independent of $\epsilon$, the number of iterations to reach convergence is $E[\sum_{k=1}^{T} N_k]=\frac{n}{b}T =\tilde{O}( \epsilon^{-2} log( 1/\epsilon))$. If we further require $C_4 = \frac{n}{b}$ to be a constant independent of $n$, the number of epochs $T$ only depends on $\frac{1}{C_4}=\frac{b}{n}$. Therefore, the number of IFO calls can be derived by $E[\sum_{k=1}^{T} (n + b N_k )]=2nT=O( n\epsilon^{-2} log( 1/\epsilon))$.%As a stochastic algorithm, the convergence rate of CNC-SCSG is similar to CNC-GD and much better than CNC-SGD.

\section{A general framework for CNC-SCSG}
In this section, we reorganize the CNC-SCSG algorithm (Algorithm \ref{alg:SVRG_SGD}) into a general framework - Algorithm \ref{alg:SVRG_escape-A}, similar to   Neon \citep{xu2017first} or Neon2  \citep{allen2017neon2}, which allows to employ any first order stochastic algorithm $\mathcal{A}$ to speed up the process of finding second-order stationary points. We interpret algorithm $\mathcal{A}$  as an ``epoch" based algorithm. For example, algorithm $\mathcal{A}$ can be one epoch of SVRG, $n$ steps of SGD or one 

\begin{center}
	%\vspace{-1.cm}
	\begin{minipage}[t]{.7\textwidth}
		\begin{algorithm}[H]
			\caption {CNC-SCSG-Escaping}
			\label{alg:SVRG_escape_module} 
			\begin{algorithmic}[1]
				\Require $x$, $\mathcal{K}_{thres} $, stepsize $\eta$ and $r$
				\State Randomly pick $i \in \{1,\dots,n\}$ and update weight 
				\State $\tilde{x}^0  = x  - r( \nabla f_{i}(x ) ) $
				\For{$k=0,1,2,..\mathcal{K}_{thres}$}
				\State $\tilde{\mu} = 1/n \sum_{i=1}^{n} \nabla f_i(\tilde{x}^k)$
				\State $x_0^k = \tilde{x}^k$
				\State Generate $N_k \sim $ Geom( $n/(n+b)$) 
				\For{$t=1,2,\dots, N_k$}
				\State Randomly pick $I_t \subset \{1,...,n\}$
				\State $x_t^k = x_{t-1}^k - \eta( \nabla f_{I_t}(x_{t-1}^k) - \nabla f_{I_t}( \tilde{x}^k ) + \tilde{\mu} )$
				\EndFor
				\State set $\tilde{x}^{k+1} = x_t^k  $
				\EndFor
				\State $y = \tilde{x}^{\mathcal{K}_{thres}+1}$
			\end{algorithmic}
		\end{algorithm}
	\end{minipage}
\end{center}

\noindent step of GD. If we run 
$\mathcal{A}$ of SGD for instance, we do not mean the entire SGD process, but $n$ steps of SGD, and hence reasonably assume that the number of IFO calls $T_{\mathcal{A}}$ for algorithm $\mathcal{A}$ is upper bounded by $O(n)$.  Note that Algorithm \ref{alg:SVRG_SGD} consists of two parts: 1) the SGD perturbation step (Lines 4-9) to inject noise and the $\mathcal{K}_{thres}$ SCSG epochs following 
this perturbation step; 2) the remaining SCSG epochs. The first part can be taken out to form Algorithm \ref{alg:SVRG_escape_module}, and will be triggered whenever an $\epsilon_g$-first-order stationary point is satisfied in the main Algorithm \ref{alg:SVRG_escape-A}. The whole algorithm terminates when the first-order condition is satisifed and $f$ decreases less than $f_{thres}$.

In Algorithm \ref{alg:SVRG_escape-A}, the first-order algorithm $\mathcal{A}$ is applied to decrease the function value in each epoch $k$, until the norm of the gradient becomes small. Once a critical point $y_k$ s.t. $\|\nabla f(y_k) \| \leq \epsilon_g $ is reached, Algorithm \ref{alg:SVRG_escape_module} is invoked, and returns a new iterate $x_k$, which guarantees a sufficient decrease in the value of $f$, unless $y_k$ is already
an $(\epsilon_g, \epsilon_h)$-second-order stationary point. This general framework, similar to Neon and Neon2, also requires the evaluation of the first-order condition (Line 4). For stochastic algorithms that do not compute full gradients, a few approximation schemes have been developed to effectively check the first-order condition \citep{xu2017first,allen2017neon2} (see Appendix C).

\begin{center}
\vspace{-0.8cm}
\begin{minipage}[t]{.7\textwidth}
\begin{algorithm}[H]
	\caption {$\mathcal{A}$ with CNC-SCSG }
	\label{alg:SVRG_escape-A} 
	\begin{algorithmic}[1]
		\Require  $ \epsilon_g$, $ \epsilon_h$, stepsize $\eta$ and $r$
		\State  $\mathcal{K}_{thres}\leftarrow\tilde{O}(\epsilon_h^{-1}log(\epsilon_h^{-1}))$
		$f_{thres}\leftarrow\tilde{O}(\epsilon_h^4)$
		\For{$k=0,1,2,...$}
		\State $y_k = \mathcal{A}( x_k)$
		\If{ $\epsilon_g$-first-order condition }
		\State $x_k$ = CNC-SCSG-Escaping($y_k$, $\mathcal{K}_{thres}$, $\eta$, $r$)
		\If{ $f(y_k) - f(x_k) \leq f_{thres} $}
		\State return $y_k$
		\EndIf
		\Else
		\State $x_k = y_k$
		\EndIf
		\EndFor
	\end{algorithmic}
\end{algorithm}
\end{minipage}
\end{center}

%For practical consideration, we can make sample size $|S|$ large enough and use $\nabla f_{S}(x_{t-1}^k)$ to check $\epsilon_g$-first-order condition. When we say sample size large enough, it means that $\| \nabla f_{S}(x_{t-1}^k) -  \nabla f(x_{t-1}^k)\| \leq \epsilon/2$ with high probability $1-\delta$, hence $\| \nabla f_{S}(x_{t-1}^k) \| \leq \epsilon/2$ can guarantee that  $\| \nabla f(x_{t-1}^k) \| \leq \epsilon$ with high chance. We say the $\epsilon_g$-first-order stationary point is found and the algorithm is ready for SGD jump step. The sample size $|S|$  can be determined by the following lemma which was first established in \citep{ghadimi2016mini}:
%\begin{lemma}\label{prob}
%	Suppose function $f$ is gradient-bounded . Let $\nabla f_{S} ( x )= \frac{1}{|S|} \sum_{z\in S} \nabla f_z( x) $, for any $\epsilon, \delta \in (0, 1)$, $x\in\mathbb{R}^d$, when $|S| \geq \frac{2l^2( 1+ log( 1/\delta))}{\epsilon^2}$, we have $Pr( \| \nabla f_S( x) - \nabla f( x ) \| \leq \epsilon) \geq 1 - \delta$.
%\end{lemma}

%The frameworks of Neon and Neon2 provide a good mechanism to search for a better theoretical convergence rate. Our proposed framework enjoys the same benefit. However, Neon and Neon2 may encounter the following difficulty: the algorithm parameters $t$, and $\mathcal{F}$ for the Neon procedure, or $T$, and $r$ for the Neon2 procedure are hard to determine in practice. In our framework, when $\mathcal{K}_{thres}$ and $f_{thres}$ are intractable, Algorithm \ref{alg:SVRG_SGD} is recommended, where only $\mathcal{K}_{thres}$ is required to be pre-determined.
\begin{remark}The framework facilitates searching for a better dimension-free 
convergence rate, and enjoys the same benefit as in Neon \citep{xu2017first} and Neon2  \citep{allen2017neon2}. But they contain parameters, e.g., $t$ and $\mathcal{F}$ in the Neon, and $T$ and a distance $r$ in the Neon2, which so far there is no knowledge on how to set up in practice. For the CNC-SCSG,  only $\mathcal{K}_{thres}$ is required to set up and $f_{thres}$ can be omitted in practice. We will provide a guidance in Section 6. 
\end{remark}

\noindent\textbf{Convergence analysis.}
We first guarantee that there is a sufficient decrease in the function value whenever the CNC-SCSG-Escaping procedure (Algorithm \ref{alg:SVRG_escape_module}) is invoked. 
\begin{lemma} \label{theroem:escaping}
	Under Assumptions \ref{assumption4f} and \ref{CNC_assumption}, and $\lambda_{min} ( \nabla^2 f( x )) \leq \epsilon_h$ , in $\mathcal{K}_{thres} = \hat{O}( \epsilon_h^{-1}log( \epsilon_h^{-1}))$ epochs, Algorithm \ref{alg:SVRG_escape_module} returns $y$, where
	$f( x ) - f( y ) \geq \frac{\eta \tau^2 \epsilon_h^4}{144l^4 \rho^2 C} min( \frac{1}{2}, \frac{\eta}{CL})$. The expected  decrease of function value per epoch is lower bounded by
	$D_e = \tilde{O}( \epsilon_h^{5} log^{-1}( \epsilon_h^{-1})).$
\end{lemma}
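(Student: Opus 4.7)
The plan is to establish the lemma by chaining three bounds: (i) the initial SGD perturbation in Line~2 seeds the iterate with a component along the minimum-eigenvector direction via the CNC condition; (ii) the $\mathcal{K}_{\text{thres}}$ SCSG epochs amplify that seed exponentially along the negative-curvature dynamics; and (iii) the resulting displacement is converted into the claimed function-value decrease using the Hessian-Lipschitz Taylor expansion. Much of the machinery will reuse the bounds developed for Theorem~\ref{mainTheroem}, with Algorithm~\ref{alg:SVRG_escape_module} playing the role of a single ``escape episode'' of Algorithm~\ref{alg:SVRG_SGD}.

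First, let $v$ denote the unit eigenvector for $\lambda_{\min}(\nabla^2 f(x)) \leq -\epsilon_h$. In Line~2, $\tilde{x}^0 = x - r\nabla f_i(x)$ with $i$ drawn uniformly from $[n]$. Assumption~\ref{CNC_assumption} gives $E_i[\langle v, \tilde{x}^0 - x\rangle^2] \geq r^2\tau$, while Assumption~\ref{assumption4f}(3) gives $\|\tilde{x}^0 - x\| \leq rl$. So the SGD step provides a direction-aligned seed of expected squared magnitude $r^2\tau$ with overall norm at most $rl$.

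Second, I would analyze the SCSG inner loop. Rewrite Line~9 of Algorithm~\ref{alg:SVRG_escape_module} as $x_{t+1}^k = x_t^k - \eta\nabla f(x_t^k) - \eta\xi_t^k$, where $\xi_t^k$ is the variance-reduced error whose conditional second moment is bounded by $(L^2/b)\|x_t^k - \tilde{x}^k\|^2$ (the standard SCSG variance bound, crucially measured from the reference point, so it starts at zero at each epoch). Near the saddle, $\nabla f(x_t^k) = \nabla f(x) + \nabla^2 f(x)(x_t^k-x) + O(\rho\|x_t^k-x\|^2)$, so projecting onto $v$ yields the scalar recursion
\begin{equation*}
\langle v, x_{t+1}^k - x\rangle = (1+\eta\epsilon_h)\langle v, x_t^k - x\rangle - \eta\langle v,\xi_t^k\rangle - \eta\langle v,\nabla f(x)\rangle + O(\eta\rho\|x_t^k-x\|^2).
\end{equation*}
Taking expectation over the geometric $N_k$ (with $E[N_k] = n/b$) and compounding across $\mathcal{K}_{\text{thres}}$ outer epochs gives an amplification factor of order $(1+\eta\epsilon_h)^{\Theta(\mathcal{K}_{\text{thres}} n/b)}$. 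With $\mathcal{K}_{\text{thres}} = \tilde{O}(\epsilon_h^{-1}\log(\epsilon_h^{-1}))$ and using $\log(1+\eta\epsilon_h) \approx \eta\epsilon_h$, this amplifies the CNC seed up to $E[\langle v, y-x\rangle^2] = \Omega(\epsilon_h^2/\rho^2)$, while a coupling argument keeps $\|y-x\| = O(\epsilon_h/\rho)$ so the cubic remainder stays lower order.

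Third, I would translate this displacement into a function decrease via
\begin{equation*}
f(y) - f(x) \leq \langle \nabla f(x), y-x\rangle + \tfrac{1}{2}(y-x)^{\top}\nabla^2 f(x)(y-x) + \tfrac{\rho}{6}\|y-x\|^3.
\end{equation*}
Because Algorithm~\ref{alg:SVRG_escape_module} is invoked only when the first-order criterion is active, the linear term is absorbed by the quadratic one in the regime $\epsilon_g \sim \epsilon_h^2/\rho$ used by Algorithm~\ref{alg:SVRG_escape-A}. The quadratic term contributes at most $-\tfrac{1}{2}\epsilon_h E[\langle v, y-x\rangle^2]$, while the positive-eigenvalue directions are controlled because nothing amplifies their $O(rl)$ seed components. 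This yields the claimed $f(x) - f(y) = \Omega(\eta\tau^2\epsilon_h^4/(l^4\rho^2 C))$; dividing by $\mathcal{K}_{\text{thres}}$ yields $D_e = \tilde{O}(\epsilon_h^5\log^{-1}(\epsilon_h^{-1}))$.

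The hardest step will be the amplification argument. The variance-reduction bound scales with $\|x_t^k - \tilde{x}^k\|^2$, so as the iterate moves away from the saddle the noise grows on the same scale as the signal. Proving that the component along $v$ grows strictly faster than its orthogonal perturbations requires a careful martingale bound that separates the projection onto $v$ from the orthogonal directions and handles the geometric stopping time $N_k$ via iterated expectation. A secondary technical point is keeping the cubic Hessian-Lipschitz remainder lower order throughout the growth phase, which is what forces the stopping scale $\|y-x\| = O(\epsilon_h/\rho)$ and explains the $\rho^{-2}$ factor appearing in the final decrease bound.
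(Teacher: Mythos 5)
Your high-level ingredients match the paper's---the CNC condition seeds a component along the negative-curvature direction $v$, the SCSG dynamics amplify it via powers of $(I-\eta H)$, and Hessian Lipschitzness controls the Taylor remainder---but you have dropped the argument's load-bearing structure, which is a \emph{proof by contradiction}, and without it two of your steps do not close.

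The paper begins by \emph{assuming} $E[f(\tilde{x}^{\mathcal{K}})] - f(\tilde{x}^0) \ge -f_{thres}$ and then derives two incompatible bounds on $E[\|x_{t+1} - \tilde{x}^0\|^2]$: a linear-in-$\mathcal{K}$ upper bound (Lemma~\ref{distance_Upper_bound_t+1}) and an exponential lower bound $\tfrac{1}{3}\tau r^2 \kappa^{2\frac{n}{b}(\mathcal{K}-1)}$ (Lemma~\ref{distance_lower_bound}). Both bounds use the hypothesis. The upper bound comes from Lemma~\ref{decrease_saddle}, $E[\|\tilde{x}^k-\tilde{x}^{k-1}\|^2]\le C\,E[f(\tilde{x}^{k-1})-f(\tilde{x}^k)]$, which under the hypothesis caps the total displacement; the lower bound's Taylor-remainder control (Lemma~\ref{delta_bound}) also explicitly invokes $E[f(x_t)]-f(x_0)\ge -f_{thres}$, because $E[\|\delta_t\|]$ is bounded in terms of $E[\|x_k-x_0\|^2]$, which in turn is bounded via the hypothesis. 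Choosing $\mathcal{K}_{thres}\gtrsim \frac{1}{\eta\epsilon_h}\frac{n}{b}\log(\epsilon_h^{-1})$ makes the exponential exceed the linear bound, forcing a contradiction. The function-value decrease is then a \emph{consequence} of the contradiction, not of a final Taylor expansion.

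Your proposal runs the amplification forward without the hypothesis and then tries to convert the resulting displacement into a function decrease via a Taylor expansion of $f$. This leaves two concrete gaps you flag but do not fill. (a)~Because the SCSG error has conditional second moment scaling as $(L^2/b)\|x_t^k-\tilde{x}^k\|^2$, the noise grows at the same exponential rate as the signal once you are past the seed; the paper's contradiction hypothesis is precisely what keeps $\|x_k-x_0\|$ bounded so the cubic remainder $\delta_t$ and the noise do not swamp $u_t$. Your ``careful martingale bound'' would need to separate the $v$-component from the rest without this a~priori bound, and you give no mechanism for that. (b)~Your final step needs $(y-x)^\top \nabla^2 f(x)(y-x)$ to be dominated by the negative-curvature contribution, i.e.\ the positive-curvature components of $y-x$ must stay $O(rl)$. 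You assert ``nothing amplifies'' them, but the accumulated SCSG noise $\zeta_t=\sum_k (I-\eta H)^{t-k}\xi_k$ drives all directions, and the positive-curvature modes only decay geometrically if $\eta\lambda_j<2$; you would need a quantitative bound here. The paper avoids this entirely by never decomposing the Hessian at the Taylor stage: the function decrease comes from Lemma~\ref{decrease_saddle} (a one-epoch SCSG descent bound that is eigenbasis-agnostic) under the contradiction. In short, the missing idea is that the displacement is used as a \emph{diagnostic} under a contradiction hypothesis, not as a quantity you directly push into $f$'s Taylor expansion.
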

%Then we are able to prove the convergence results, the convergence rate and computational complexity as summarized in the following theorems. The proof of Theorem \ref{theroem:generic} is deferred to Sect. \ref{proofsketch}.
Lemma \ref{theroem:escaping} guarantees in expectation that the decrease of function value per epoch around strict saddle points is at least $D_e$. Then if we assume that i) when $\|\nabla f( x)\| > \epsilon_g$, in expectation $ \mathcal{A} $ drops function value by $D_{\mathcal{A}( \epsilon_g)}$ per epoch, and ii) when  $\|\nabla f( x) \|\leq \epsilon_g$ and $\lambda_{min}( \nabla^2 f( x) ) \leq  \epsilon_h$, $ \mathcal{A} $ is stable, i.e., does not increase function value too much in expectation, we can obtain the following convergence result by setting $g_{thres} = min(D_e, D_{\mathcal{A}( \epsilon_g)})$. Its proof is similar to that of Theorem \ref{mainTheroem} and has been moved to Appendix. 
\begin{theorem} \label{theroem:generic}
	Given Assumptions \ref{assumption4f} and \ref{CNC_assumption}, a first order algorithm $ \mathcal{A}$ and $y = \mathcal{A}(x)$ such that:
	\begin{align*}
	    \|\nabla f( x ) \| \ge \epsilon_g &\Rightarrow E[ f( y) - f( x )]  \leq -D_{\mathcal{A}( \epsilon_g)}, \\
		\|\nabla f( x ) \| \leq \epsilon_g &\Rightarrow E[ f( y) - f( x )]  \leq \frac{min(D_e, D_{\mathcal{A}( \epsilon_g)} ) \delta }{2} ,
	\end{align*}
	 Algorithm \ref{alg:SVRG_escape-A} converges to an $(\epsilon_g, \epsilon_h )$-second-order stationary point in 
	$T = \tilde{O}( max( \frac{1}{D_e}, \frac{1}{D_{\mathcal{A}( \epsilon_g)}}))$
	epochs with probability  $1- \delta$.
\end{theorem}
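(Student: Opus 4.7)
The plan is to adapt the outer-loop analysis used for Theorem \ref{mainTheroem} by treating the generic first-order routine $\mathcal{A}$ and the CNC-SCSG-Escaping subroutine (Algorithm \ref{alg:SVRG_escape_module}) as two separate sources of progress in $f$. I will classify each outer iteration $k$ of Algorithm \ref{alg:SVRG_escape-A} into one of three mutually exclusive types based on the state of the iterate $x_k$ fed into $\mathcal{A}$: (i) $\|\nabla f(x_k)\| > \epsilon_g$, so no escaping is triggered and only $\mathcal{A}$ runs; (ii) $\|\nabla f(x_k)\| \le \epsilon_g$ and $\lambda_{\min}(\nabla^2 f(x_k)) < -\epsilon_h$, so the escaping subroutine is invoked on a strict-saddle region; (iii) $x_k$ is already an $(\epsilon_g,\epsilon_h)$-second-order stationary point, in which case the $f(y_k)-f(x_k)\le f_{thres}$ check fires and the algorithm returns, producing a valid output.

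For type (i) iterations, the first hypothesis on $\mathcal{A}$ gives $E[f(y_k)-f(x_k)]\le -D_{\mathcal{A}(\epsilon_g)}$ in one epoch. For type (ii) iterations, $\mathcal{A}$ may briefly increase $f$ by at most $\tfrac{1}{2}\min(D_e,D_{\mathcal{A}(\epsilon_g)})\delta$ in expectation by the stability hypothesis, but Lemma \ref{theroem:escaping} then provides an expected drop of at least $\mathcal{K}_{thres} D_e = \Theta(f_{thres})$ across the subsequent $\mathcal{K}_{thres}$ escaping epochs, which dominates that slack. Charging this drop evenly across the $\mathcal{K}_{thres}+1$ epochs spent in a type-(ii) round, the expected per-epoch progress is $\Theta(D_e)$; for type-(i) epochs it is $D_{\mathcal{A}(\epsilon_g)}$; type (iii) terminates the algorithm.

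Letting $N_1,N_2$ denote the (random) number of type-(i) and type-(ii) iterations before termination, and $\bar E := N_1+(\mathcal{K}_{thres}+1)N_2$ the total epoch count, telescoping $f$ downward against the lower bound $f^*$ yields
\[
E[N_1]\,D_{\mathcal{A}(\epsilon_g)} + E[N_2]\,\mathcal{K}_{thres}\,D_e \;\le\; f(x_0)-f^* + O(\delta),
\]
so $E[\bar E]=\tilde O\bigl(\max(1/D_e,1/D_{\mathcal{A}(\epsilon_g)})\bigr)$ after plugging in $\mathcal{K}_{thres}=\tilde O(\epsilon_h^{-1}\log \epsilon_h^{-1})$. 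To lift this expectation bound to a probability-$1-\delta$ bound on $\bar E$, I will mimic the argument used inside the proof of Theorem \ref{mainTheroem}: apply Markov's inequality to the sum of per-iteration deviations from the expected decrease, combined with a union bound over the (at most $\tilde O(1/D_e)$) escaping invocations. The built-in slack $\delta/2$ in the stability hypothesis is precisely what absorbs this tail so that the overall failure probability is at most $\delta$.

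The main obstacle is the tight bookkeeping required in type (ii): because $\mathcal{A}$ is only assumed \emph{stable} (not decreasing) near first-order stationary points, the small expected increase it may introduce must be strictly dominated by the escaping-induced decrease, and this domination has to survive the expectation-to-high-probability conversion. Matching the algorithmic choice $f_{thres}=\tilde O(\epsilon_h^4)$ to both the lower bound $\mathcal{K}_{thres} D_e$ from Lemma \ref{theroem:escaping} (so that termination actually certifies a genuine second-order point) and the stability slack $\tfrac{1}{2}\min(D_e,D_{\mathcal{A}(\epsilon_g)})\delta$ is what closes this accounting without assuming any structural property of $\mathcal{A}$ beyond its two stated expectation inequalities.
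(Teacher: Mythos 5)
Your three-type decomposition and telescoping bookkeeping capture the right ideas, but your route to the high-probability conclusion deviates from what the paper actually does, and that deviation is where the gap lies. The paper's proof of this theorem (Appendix D.1) is a one-line reduction: set $g_{thres} = \min(D_e, D_{\mathcal{A}(\epsilon_g)})$, then literally rerun the probabilistic argument from Theorem \ref{mainTheroem}. That argument fixes the horizon $T$ in advance, conditions each epoch on the event $A_t$ (large gradient or large negative curvature at $\tilde{x}^t$) versus its complement, bounds the expected per-epoch change by $-g_{thres}$ and $+\delta g_{thres}/2$ respectively, telescopes against $f(x_0)-f^*$, and directly obtains $\tfrac{1}{T}\sum_t P_t \le \delta$. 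This yields a ``randomly chosen iterate is second-order stationary with probability $\ge 1-\delta$'' guarantee --- no Markov's inequality, no union bound, and crucially no stopping times.

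Your proposal, by contrast, introduces random counters $N_1, N_2$ (number of type-(i) and type-(ii) rounds before the algorithm terminates) and telescopes over a random number of terms. That requires optional-stopping machinery that you do not develop, and it also forces you to handle the termination test $f(y_k)-f(x_k)\le f_{thres}$ carefully: Lemma \ref{theroem:escaping} is an \emph{expectation} statement, so a single invocation of the escaping subroutine can return a small decrease even at a genuine strict-saddle point, meaning the termination check can fire falsely. The paper sidesteps all of this by never conditioning on termination. Your proposed ``Markov's inequality on per-iteration deviations plus a union bound over escaping invocations'' step is the vaguest part of the write-up: you say you will mimic the proof of Theorem \ref{mainTheroem}, but that proof contains neither a Markov step nor a union bound, so the mimicry you describe is not actually available. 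A plain Markov bound $P(\bar E > T) \le E[\bar E]/T$ with $T \asymp E[\bar E]/\delta$ would in fact recover the right $1/\delta$ scaling, but you would still owe the random-stopping-time justification for the $E[\bar E]$ bound. In short: your accounting of the decrease rates is correct and consistent with the paper's $f_{thres}=\tilde O(\epsilon_h^4)$ and $\mathcal{K}_{thres}D_e = \Theta(f_{thres})$, but the expectation-to-high-probability conversion is mis-described and incomplete relative to the paper's much simpler fixed-$T$ averaging argument.
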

\begin{theorem} \label{theroem:generic_IFO}
	Assume all the assumptions in Theorem \ref{theroem:generic} hold, the computational complexity (i.e., the number of IFO calls) is   
	$ O( max( \frac{1}{D_e}, \frac{1}{D_{\mathcal{A}( \epsilon_g)}}) \cdot (max( T_e, 
	+ T_{check}/\mathcal{K}_{thres}, 
	T_{\mathcal{A}}+ T_{check})))$, where $T_e$ is the number of IFO calls of Algorithm \ref{alg:SVRG_escape_module} per epoch, $T_{\mathcal{A}}$ is the number of IFO calls of $\mathcal{A}$ per epoch, and $T_{check}$ is the number of IFO calls of checking the first-order stationary condition.
\end{theorem}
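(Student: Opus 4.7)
The plan is to combine the epoch bound already established in Theorem \ref{theroem:generic} with a per-epoch IFO accounting. Theorem \ref{theroem:generic} tells us that Algorithm \ref{alg:SVRG_escape-A} reaches an $(\epsilon_g,\epsilon_h)$-second-order stationary point within $T = \tilde{O}(\max(1/D_e, 1/D_{\mathcal{A}(\epsilon_g)}))$ epochs with probability at least $1-\delta$, so the task reduces to bounding the IFO cost contributed by a single epoch and multiplying by $T$.

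I would split the $T$ epochs into two types. A gradient-focused epoch is one outer iteration of Algorithm \ref{alg:SVRG_escape-A} that invokes $\mathcal{A}$ once and checks the first-order condition but does not call CNC-SCSG-Escaping; its cost is $T_{\mathcal{A}} + T_{check}$ IFO calls. An escape epoch is one outer iteration of the loop inside Algorithm \ref{alg:SVRG_escape_module} and costs $T_e$ IFO calls by assumption. Crucially, a single invocation of CNC-SCSG-Escaping produces $\mathcal{K}_{thres}$ consecutive escape epochs but is preceded by exactly one first-order condition test, so amortizing that check equally across the $\mathcal{K}_{thres}$ escape epochs it triggers raises the effective per-epoch cost only to $T_e + T_{check}/\mathcal{K}_{thres}$.

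Let $N_g$ and $N_e$ denote the numbers of gradient-focused and escape epochs actually executed, so $N_g + N_e \leq T$. By the per-epoch accounting above, the total IFO count is at most
\[
N_g\bigl(T_{\mathcal{A}} + T_{check}\bigr) + N_e\bigl(T_e + T_{check}/\mathcal{K}_{thres}\bigr),
\]
and upper-bounding both coefficients by their maximum gives a total of at most $T \cdot \max\{T_{\mathcal{A}} + T_{check},\, T_e + T_{check}/\mathcal{K}_{thres}\}$. Substituting $T = \tilde{O}(\max(1/D_e, 1/D_{\mathcal{A}(\epsilon_g)}))$ from Theorem \ref{theroem:generic} and absorbing the logarithmic factor into $O(\cdot)$ yields exactly the stated complexity.

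The only delicate point is the amortization of $T_{check}$: the first-order condition is tested once per outer iteration of Algorithm \ref{alg:SVRG_escape-A}, independently of whether that test triggers an escape call, so naively charging $T_{check}$ to every escape epoch would overcount by a factor of $\mathcal{K}_{thres}$ and erase the benefit of performing only one check per escape invocation. I expect this to be the only nontrivial obstacle; the remaining steps are mechanical bookkeeping, with the randomness of the SCSG inner-loop lengths $N_k$ already folded into the assumed value of $T_e$.
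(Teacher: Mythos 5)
Your proof is correct and matches the paper's own argument almost exactly: both decompose the $T$ epochs bounded by Theorem~\ref{theroem:generic} into $\mathcal{A}$-epochs of cost $T_{\mathcal{A}} + T_{check}$ and SCSG-escape epochs of cost $T_e$, amortize the single first-order check over the $\mathcal{K}_{thres}$ escape epochs it triggers to get an effective per-epoch cost of $T_e + T_{check}/\mathcal{K}_{thres}$, and multiply the maximum per-epoch cost by $T$. Your explicit bookkeeping via $N_g + N_e \leq T$ is a slightly more careful rendering of the same accounting.
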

%Considering the IFO calls, including $\epsilon_g$-first-order condition check, per epoch are $T_e + T_{check}/\mathcal{K}_{thres}$  for CNC-SCSG-escaping( $\cdot$) and $T_{\mathcal{A}}+ T_{check}$ for Algorithm $\mathcal{A}$, we derive the computational complexity for the general framework:
The computational cost $T_{check}$ of checking first-order stationary condition can be considered either in one epoch of $\mathcal{A}$ or in $\mathcal{K}_{thres}$ epochs of SCSG in Algorithm 3. Therefore, the upper bound on the number of IFO calls per epoch, including those of Line 4 in Algorithm 4, is  $max( T_e + T_{check}/\mathcal{K}_{thres}, 
	T_{\mathcal{A}}+ T_{check})$.
The computational complexity follows from multiplying the above bound with the number of epochs in Theorem \ref{theroem:generic}.
%We compute the number of IFO calls in each epoch where either Algorithm 3 is computed or $\mathcal{A}$ is computed, with $T_{check}$
%Comparing with computational complexities of Neon and Neon2, our dimension-free result is better.
Note that there exist many first-order algorithms satisfying the requirements of algorithm $\mathcal{A}$.
For completeness, we provide an exemplar algorithm in Appendix D. 
%consisting of an epoch of SCSG with the outer loop mini-batch size of $B_1$ and the inner loop mini-batch size of $b_1$. 
%\subsubsection{Proof sketch}
%Theorem \ref{theroem:escaping} guarantees the decrease of function value per iteration in expectation around strict saddle points is $D_e$. The assumption about algorithm $ \mathcal{A} $ in Theorem \ref{theroem:generic} makes sure that $ \mathcal{A} $ will drop function value in $D_{\mathcal{A}( \epsilon_g)}$ per iteration in expectation when $\|\nabla f( x)\| > \epsilon_g$ and won't increase function value too much in expectation when  $\|\nabla f( x) \|\leq \epsilon_g$ and $\lambda_{min}( \nabla^2 f( x) ) \leq  \epsilon_h$. Then let $g_{thres} = min(D_e, D_{\mathcal{A}( \epsilon_g)})$ and follow previous argument for main theorem, we can obtain the desired result. 

\section{Proof sketches}
\label{proofsketch}

\begin{table}[H]
	\begin{center}
		\caption{Constraints for parameters used in the proof}
		\label{table:parameter_setup}
		\begin{tabular}{|c|c|c|}
			\hline
			Notation & Constraint Requirements & Reference \\
			\hline
			$\epsilon_g$ & $\epsilon_g = \epsilon $ &\\
			$\epsilon_h$ & $\epsilon_h = ( \rho  \epsilon)^\frac{2}{5} $ & \\
			%$\lambda$ 	& $\lambda \geq \epsilon_h$  & absolute minimum eigenvalue at saddles   \\ 
			%$\tau$ & positive constant & NCC assumption w.r.t sgd   \\ 
			%$L$ & positive constant & L-smooth\\
			%$\rho$ & positive constant& $\rho$-Lipschitz Hessian \\
			$\gamma$ &$\gamma \leq min\{\eta_0 L(\frac{n}{b})^{\frac{2}{3}},  \frac{1}{3} \}$ &  \\
			$\eta$ & $\eta = L\gamma \frac{b}{n}^{\frac{2}{3}}$ & SCSG stepsize\\
			%C & positive constant  & in Lemma \ref{decrease_saddle}    \\ 
			%$C_1$ & positive constant & \\
			%$C_2$ & positive constant & \\
			%$l^2$& positive constant  & upper bound for variance of SGD    \\ 
			%$\kappa$ & $\kappa = 1+ \eta \lambda$ &   \\ 
			$r$& $r  \leq min( \frac{1 }{2},  \frac{\eta }{C L}) \frac{\tau}{12 \rho l^3}\epsilon_h^2 $ &   SGD stepsize  \\
			$f_{thres}$ & $f_{thres} \leq \frac{\eta \tau r \epsilon_h^2}{12 l \rho C} $ & Lemma \ref{eigenvalue_m}\\
			$\mathcal{K}_{thres}$ & $ \mathcal{K}_{thres} \geq \frac{C_1}{\eta \epsilon_h} \frac{b}{n}\log( \frac{1}{\epsilon_h}) $&  Lemma \ref{eigenvalue_m}\\
			$g_{thres}$ & $g_{thres}\leq \frac{\gamma}{5L} ( \frac{n}{b})^\frac{1}{3} \epsilon_g^2$ & Inequality (\ref{decrease_largegradient}) \\
			$g_{thres}$ & $ g_{thres} \geq  \frac{10l^2\gamma^2}{L \delta}(\frac{b}{n})^{\frac{1}{3}}$ & Inequality (\ref{increase_function_value}) \\ 
			$g_{thres}$ & $ g_{thres} \leq \frac{n}{b}\frac{\eta^2 \epsilon_h^3 \tau r}{C_1 12 l \rho C} log^{-1}( \frac{1}{\epsilon_h})$  & Inequality (\ref{decrease_saddle_points}) \\ 
			\hline
		\end{tabular}
	\end{center}
\end{table}

%The theoretical analysis is based on a specific setup for hyper-parameters, as summarized in Table \ref{table:parameter_setup}.
In order to prove Theorem \ref{mainTheroem}, we consider three types of epochs based on the magnitude of the gradient
   and the most negative eigenvalue of Hessian matrix at each snapshot $\tilde{x}^k$ for $k=1,2,3, ...$: i) $\tilde{x}^k$ with large gradient $\|\nabla f( \tilde{x}^k)\|\geq \epsilon_g$; ii) $\tilde{x}^k$ with large negative curvatur $\lambda_{min}(\nabla^2 f( \tilde{x}^k)) \leq -\epsilon_h$; and iii) $\tilde{x}^k$ satisfying neither of them. The following analysis is epoch-based and mainly inspired by the paper \citep{daneshmand2018escaping}, but our analysis based on SCSG is more difficult. The theoretical analysis is based on a specific setup for hyper-parameters, as  
summarized in Table \ref{table:parameter_setup}. 
Proofs of the lemmas used in this section have been moved to Appendix.
%However combining variance reduced methods with SGD for escaping saddle point is more difficult due to  constraints for SCSG to work under nonconvex setting. 

\noindent\textbf{Regime with large gradients.}
%and we use the following lemma to bound the expected decrease of function values over epochs.
%\begin{lemma} \label{LargeGrad}
%	\citep{lei2017non} Assuming that
%	$\eta L= \gamma ( \frac{b}{n})^\frac{2}{3}$ 
%	where $b \geq 1$, $n\geq 8b $ and $\gamma \leq \frac{1}{3}$,
%	the decrease of function value for each epoch can be evaluated in the following way: 
%	$E[f(\tilde{x}^k) - f(\tilde{x}^{k-1})] \leq -\frac{\gamma}{5L} ( \frac{n}{b})^\frac{1}{3} E[\| \nabla f( \tilde{x}^k) \|^2].$
%\end{lemma}
Considering that $\epsilon_g$-first-order condition is not satisfied at $\tilde{x}^{k}$, we can get the expected  function value decrease per epoch at large gradient regime as in the following lemma:
\begin{lemma} \label{decrease_LargeGrad}
	With conditions: $\eta L= \gamma ( \frac{b}{n})^\frac{2}{3}$ where $b \geq 1$, $n\geq 8b $ and $\gamma \leq \frac{1}{3}$ and assumption that $\epsilon_g$-first-order condition is not satisfied 
	%at $\tilde{x}^{k}$
	, the decrease of function value per  epoch satisfies that 
	$E[f(\tilde{x}^k) - f(\tilde{x}^{k-1})] \leq -\frac{\gamma}{5L} ( \frac{n}{b})^\frac{1}{3} \epsilon_g^2.$
\end{lemma}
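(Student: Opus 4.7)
The plan is to adapt the standard non-convex SCSG epoch analysis (Lei et al., 2017) to our notation, exploiting three ingredients: $L$-smoothness of $f$, a mini-batch variance bound for the SVRG-type gradient estimator, and a Wald-type identity for the geometric stopping time $N_k$. Note that the lemma is purely about one SCSG epoch (the SGD perturbation branch of Algorithm \ref{alg:SVRG_SGD} is not triggered when $\|\tilde{\mu}\| > \epsilon$), and no hypothesis on negative curvature is needed here.

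First I would establish a one-step descent inequality inside the inner loop. With $v_t = \nabla f_{I_t}(x_{t-1}^k) - \nabla f_{I_t}(\tilde{x}^k) + \tilde{\mu}$, which is an unbiased estimator of $\nabla f(x_{t-1}^k)$, the $L$-smoothness of $f$ together with the mini-batch variance bound
\begin{equation*}
E\|v_t - \nabla f(x_{t-1}^k)\|^2 \leq \frac{L^2}{b}\|x_{t-1}^k - \tilde{x}^k\|^2,
\end{equation*}
which follows directly from Assumption \ref{assumption4f}(1), combines to give
\begin{equation*}
E[f(x_t^k) - f(x_{t-1}^k)] \leq -\eta\Bigl(1 - \tfrac{L\eta}{2}\Bigr)\|\nabla f(x_{t-1}^k)\|^2 + \frac{L^3\eta^2}{2b}\|x_{t-1}^k - \tilde{x}^k\|^2.
\end{equation*}

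Next I would invoke a Wald-type identity: for $N \sim Geom(n/(n+b))$ and any adapted sequence $\{a_t\}$,
\begin{equation*}
E[a_N - a_0] = \tfrac{n}{b}\, E[a_{N+1} - a_N],
\end{equation*}
which follows by summing $(1-\gamma_0)\gamma_0^t(a_{t+1}-a_t)$ with $\gamma_0 = n/(n+b)$. Applying it with $a_t = f(x_t^k)$ converts the single-step inequality into an epoch-level bound for $E[f(\tilde{x}^{k+1}) - f(\tilde{x}^k)]$, and applying it a second time with $a_t = \|x_t^k - \tilde{x}^k\|^2$ together with the recursion $\|x_{t+1}^k - \tilde{x}^k\|^2 = \|x_t^k - \tilde{x}^k\|^2 - 2\eta\langle v_{t+1}, x_t^k - \tilde{x}^k\rangle + \eta^2\|v_{t+1}\|^2$ recursively controls the iterate discrepancy by $\|\nabla f(\tilde{x}^k)\|^2$. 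Plugging in the SCSG stepsize $\eta L = \gamma(b/n)^{2/3}$, and using $\gamma \leq 1/3$ and $n \geq 8b$ to ensure the cross terms are absorbed, yields
\begin{equation*}
E[f(\tilde{x}^{k+1}) - f(\tilde{x}^k)] \leq -\tfrac{\gamma}{5L}\bigl(\tfrac{n}{b}\bigr)^{1/3}\|\nabla f(\tilde{x}^k)\|^2,
\end{equation*}
after which the large-gradient hypothesis $\|\nabla f(\tilde{x}^k)\|^2 \geq \epsilon_g^2$ gives the claim.

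\textbf{Main obstacle.} The delicate step is the double telescoping: the variance-induced term $\tfrac{L^3\eta^2}{2b}\|x_{t-1}^k - \tilde{x}^k\|^2$ must be dominated by the descent term $-\eta\|\nabla f(x_{t-1}^k)\|^2$ after summing over the geometric stopping time, which is feasible only at the specific SCSG scaling $\eta L \propto (b/n)^{2/3}$. The constants $5$, $8$, and $1/3$ in the hypothesis are precisely sized so that all cross terms are absorbed with constant slack, and cleanly tracking these constants through the double application of the Wald-type identity (and through the linking of $\nabla f(x_t^k)$ back to $\nabla f(\tilde{x}^k)$ via smoothness) is the most tedious bookkeeping in the argument.
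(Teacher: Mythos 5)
Your proposal is correct and mirrors the paper's own argument almost exactly: the paper (following Lei et al., 2017) derives the same one-step descent with the mini-batch variance bound $\frac{L^2}{b}\|x_{t}^k - \tilde{x}^k\|^2$, applies the geometric-distribution (Wald-type) identity twice — once to $f(x_t^k)$ and once to $\|x_t^k - \tilde{x}^k\|^2$ — to obtain two epoch-level inequalities (Lemmas 7 and 8 in the appendix), and combines them with Young's inequality and the scaling $\eta L = \gamma(b/n)^{2/3}$, $\gamma \le 1/3$, $n \ge 8b$ to extract the constant $\gamma/(5L)$. No meaningful gaps or deviations.
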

By setting parameter $g_{thres}\leq \frac{\gamma}{5L} ( \frac{n}{b})^\frac{1}{3} \epsilon_g^2$, we further have that
\begin{equation} \label{decrease_largegradient}
E[f(\tilde{x}^k) - f(\tilde{x}^{k-1})] \leq -g_{thres},
\end{equation}
which implies the average decrease of function value for each epoch is no smaller than $g_{thres}$.

\noindent\textbf{Regime with large negative curvatures.} In this part, we establish the number of epochs ($\mathcal{K}_{thres}$) required for a significant decrease of function value ($f_{thres}$) around strict saddle points. 
\begin{lemma}\label{eigenvalue_m}
	If the Hessian matrix at $\tilde{x}^0$ has a small negative eigenvalue, i.e. 
	$ \lambda_{min}( \nabla f^2(\tilde{x}^0)) \leq -\epsilon_h$
	,  $f_{thres} \leq \frac{\eta \tau r \epsilon_h^2}{12 l \rho C} $ and $ \mathcal{K}_{thres} \geq \frac{C_1}{\eta \epsilon_h} \frac{n}{b}\log( \frac{1}{\epsilon_h}) $, where $C_1 $ is a sufficient large constant and independent of $\epsilon_h$,
	the expectation of the function value decrease is 
	$E[f(\tilde{x}^0) - f(\tilde{x}^{\mathcal{K}_{thres}})]  \geq f_{thres}.  $
	
\end{lemma}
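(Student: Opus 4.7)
I would adapt the coupling-and-amplification strategy of \cite{daneshmand2018escaping} to the variance-reduced SCSG dynamics. The high-level idea is: the single SGD perturbation step seeds a non-trivial component along the negative-curvature eigenvector $v_x$ (by the CNC assumption), the subsequent SCSG inner iterations amplify this component geometrically (because of the negative eigenvalue $\lambda_{\min}\leq -\epsilon_h$), and a second-order Taylor expansion at the end converts the amplified displacement into the claimed decrease of $f$.

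Let $x$ denote the pre-perturbation point and let $v_x$ be the eigenvector of $\nabla^2 f(x)$ associated with $\lambda_{\min}$, so that $\tilde{x}^0 = x - r\,\nabla f_i(x)$. For the seeding, Assumption \ref{CNC_assumption} at $x$ (which still lies in a stuck region by the hypothesis together with $\rho$-Lipschitz Hessian and the smallness of $rl$) yields
\[
E\bigl[\langle v_x,\, \tilde{x}^0 - x\rangle^2\bigr] \;=\; r^2\, E\bigl[\langle v_x,\, \nabla f_i(x)\rangle^2\bigr] \;\geq\; r^2\tau .
\]
For the amplification, I would introduce an auxiliary sequence $\hat{x}^k_t$ started at $x$ and driven by the same mini-batches $I_t$, and expand $\nabla f_{I_t}(y) = \nabla f_{I_t}(x) + H_{I_t}(y-x) + R_{I_t}(y)$ with $\|R_{I_t}(y)\| \leq \tfrac{\rho}{2}\|y-x\|^2$ using Assumption \ref{assumption4f}(2). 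A short calculation on the variance-reduced inner update $x^k_t - x^k_{t-1} = -\eta(\nabla f_{I_t}(x^k_{t-1}) - \nabla f_{I_t}(\tilde{x}^k) + \tilde{\mu})$ then gives the coupled recursion
\[
\tilde{x}^k_t - \hat{x}^k_t \;=\; (I - \eta H)(\tilde{x}^k_{t-1} - \hat{x}^k_{t-1}) + \mathcal{E}_t , \qquad H := \nabla^2 f(x),
\]
where $\mathcal{E}_t$ is $O(\rho)$ times squared distances. Projecting onto $v_x$ and squaring amplifies $E[\langle v_x, \cdot\rangle^2]$ by a factor at least $(1+\eta\epsilon_h)^2$ per inner iteration. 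Taking expectation over $N_k\sim\mathrm{Geom}(n/(n+b))$ via the generating-function identity $E[(1+\eta\epsilon_h)^{2N_k}] = b/(b-2n\eta\epsilon_h)\approx 1 + 2n\eta\epsilon_h/b$ yields a per-epoch amplification of $(1+2n\eta\epsilon_h/b)$, and the lower bound on $\mathcal{K}_{thres}$ in the hypothesis (cf.\ Table \ref{table:parameter_setup}) is precisely what compounds this per-epoch factor enough to lift the seed $r^2\tau$ up to the level needed in the next step.

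Finally, the $\rho$-Lipschitz Hessian Taylor bound together with the first-order triggering condition $\|\nabla f(x)\|\leq \epsilon_g$ give
\[
f(\tilde{x}^{\mathcal{K}_{thres}}) - f(x) \;\leq\; \langle\nabla f(x),\, \tilde{x}^{\mathcal{K}_{thres}}\!-\!x\rangle + \tfrac{1}{2}(\tilde{x}^{\mathcal{K}_{thres}}\!-\!x)^\top H (\tilde{x}^{\mathcal{K}_{thres}}\!-\!x) + \tfrac{\rho}{6}\|\tilde{x}^{\mathcal{K}_{thres}}\!-\!x\|^3 .
\]
The linear term is $O(\epsilon_g)$ times a bounded distance and the cubic term is $O(\rho)$ times a small cubed distance; both are dominated by the quadratic term, which contributes at most $-\tfrac{\epsilon_h}{2} E[\langle v_x,\, \tilde{x}^{\mathcal{K}_{thres}} - x\rangle^2]$, giving the stated $f_{thres} = \tfrac{\eta\tau r\epsilon_h^2}{12 l\rho C}$ after tracking constants.

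The main obstacle will be controlling $\mathcal{E}_t$ in the coupling. Unlike plain CNC-SGD, the SCSG estimator carries a nonzero bias of order $\rho\|\tilde{x}^k - x^k_{t-1}\|^2$ that compounds across the inner loop, and $\tilde{x}^k$ itself drifts across outer epochs. Closing the argument therefore calls for a simultaneous two-variable induction on $E[\langle v_x, \tilde{x}^k - x\rangle^2]$ (which must stay lower-bounded to sustain amplification) and on $E[\|\tilde{x}^k - x\|^2]$ (which must stay upper-bounded so the Hessian remainder remains negligible); the stepsize/batch relationships of Table \ref{table:parameter_setup} are essentially what make these two recursions close simultaneously. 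A secondary technicality is the finiteness of $E[(1+\eta\epsilon_h)^{2N_k}]$, which demands $2n\eta\epsilon_h < b$; this is automatic under the choice $\eta = L\gamma(b/n)^{2/3}$ for small enough $\epsilon_h$, but must be explicitly verified.
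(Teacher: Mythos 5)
Your sketch departs from the paper's proof in a way that leaves a real gap, and the gap is precisely at the step that the paper's contradiction argument is designed to avoid. In the final Taylor-expansion step you claim the quadratic term ``contributes at most $-\tfrac{\epsilon_h}{2}E[\langle v_x,\tilde{x}^{\mathcal{K}_{thres}}-x\rangle^2]$,'' but that is not what the quadratic form gives. Writing $\Delta = \tilde{x}^{\mathcal{K}_{thres}}-x = \alpha v_x + w$ with $w\perp v_x$, one has $\tfrac12\Delta^\top H\Delta \le -\tfrac{\epsilon_h}{2}\alpha^2 + \tfrac{L}{2}\|w\|^2$, so the displacement along the \emph{positive}-curvature directions can swamp the negative-curvature gain unless $L\,E[\|w\|^2]\ll\epsilon_h\,E[\alpha^2]$. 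The power-iteration amplification only grows $E[\alpha^2]$; it says nothing about whether $\|w\|$ stays small relative to $|\alpha|$, and your sketch supplies no upper bound on the orthogonal component. You acknowledge the need for a ``simultaneous two-variable induction'' but do not say what inequality would actually close the upper-bound recursion for $E[\|\tilde{x}^k-x\|^2]$; that is not a secondary technicality, it is the crux.

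The paper sidesteps this entirely by arguing by contradiction and never Taylor-expanding $f$ directly. It assumes for contradiction that $E[f(\tilde{x}^{\mathcal{K}})] - f(\tilde{x}^0) \ge -f_{thres}$, then uses an SCSG-specific per-epoch inequality (Lemma~\ref{decrease_saddle}: $E[\|\tilde{x}^k - \tilde{x}^{k-1}\|^2] \le C\,E[f(\tilde{x}^{k-1}) - f(\tilde{x}^k)]$) to turn the assumed small decrease into a \emph{linear-in-$\mathcal{K}$ upper bound} on the total squared displacement (Lemmas~\ref{distance_Upper_bound}--\ref{distance_Upper_bound_t+1}). Separately, it decomposes $x_{t+1}-x_0 = u_t + \eta(\delta_t + d_t + \zeta_t)$, uses the CNC condition to get $E[\|u_t\|^2]\ge\tau r^2\kappa^{2t}$, and bounds the other terms (again using the contradiction hypothesis to control $\delta_t$) to obtain an \emph{exponential-in-$\mathcal{K}$ lower bound} on the same displacement (Lemma~\ref{distance_lower_bound}). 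For $\mathcal{K}\ge\mathcal{K}_{thres}$ the two bounds are incompatible, which falsifies the hypothesis. The decomposition you'd need is the paper's direct one, not the coupled-auxiliary-sequence version you propose: the SCSG update already linearizes cleanly around $x_0$ via $\xi_t = \nabla f(x_{t-1}) - \nabla f_{I_t}(x_{t-1}) + \nabla f_{I_t}(\tilde{x}^k) - \tilde{\mu}$, and introducing a parallel sequence $\hat{x}^k_t$ brings in reference-point cross-terms of order $\eta(H_{I_t}-H)(\tilde{x}^k-\hat{\tilde{x}}^k)$ that are first order in the distances and not absorbed into an $O(\rho\,\|\cdot\|^2)$ error as your recursion asserts. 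To repair your argument you would need to (i) adopt the contradiction framework or otherwise produce an unconditional upper bound on $E[\|\tilde{x}^k - \tilde{x}^0\|^2]$, and (ii) run the contradiction on distances rather than on a Taylor expansion of $f$.
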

After imposing constraints that $f_{thres} \leq \frac{\eta \tau r \epsilon_h^2}{12 l \rho C} $ and $ \mathcal{K}_{thres} \geq \frac{C_1}{\eta \epsilon_h} \frac{b}{n}\log( \frac{1}{\epsilon_h}) $, we can find constraint for average decrease of function value per epoch:
\begin{align} 
g_{thres} =  \frac{f_{thres}}{\mathcal{K}_{thres}} \leq \frac{n}{b}\frac{\eta^2 \epsilon_h^3 \tau r}{ 12 l \rho C C_1} log^{-1}( \frac{1}{\epsilon_h}). \label{decrease_saddle_points}
\end{align}
\textbf{Regime satisfying neither of above.} We prove that the function value around a second-order stationary point does not increase too much due to disturbance.
\begin{lemma}
	Given that
	$\eta L= \gamma ( \frac{b}{n})^\frac{2}{3}$, 
	where $b \geq 1$, $n\geq 8b $ and $\gamma \leq \frac{1}{3}$, for each epoch of the SCSG, the increase of function value caused by disturbance is,
	$E[f(\tilde{x}^k) - f(\tilde{x}^{k-1})]  \leq \frac{5l^2\gamma^2}{L}(\frac{b}{n})^{\frac{1}{3}}.$
\end{lemma}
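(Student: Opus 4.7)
The plan is to bound the expected change in $f$ over one SCSG epoch by a uniform per-inner-step bound that is independent of the iterate, and then to exploit the identity $E[N]=n/b$ for $N\sim \text{Geom}(n/(n+b))$ to turn the per-step bound into a per-epoch bound. Consider one epoch that takes $\tilde{x}^{k-1}\mapsto\tilde{x}^k$ with inner iterates $x_0=\tilde{x}^{k-1},x_1,\dots,x_N=\tilde{x}^k$, variance-reduced gradients $g_t=\nabla f_{I_t}(x_{t-1})-\nabla f_{I_t}(\tilde{x}^{k-1})+\tilde{\mu}$, and $\tilde{\mu}=\nabla f(\tilde{x}^{k-1})$. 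Applying $L$-smoothness of $f$ (inherited from Assumption 1(1)) to the update $x_t=x_{t-1}-\eta g_t$ gives
\[
f(x_t)-f(x_{t-1})\le -\eta\langle\nabla f(x_{t-1}),g_t\rangle+\frac{L\eta^2}{2}\|g_t\|^2,
\]
and taking conditional expectation over $I_t$ using the unbiasedness $E[g_t\mid x_{t-1}]=\nabla f(x_{t-1})$ reduces this to $-\eta\|\nabla f(x_{t-1})\|^2+(L\eta^2/2)E\|g_t\|^2$. The first (non-positive) term will be dropped --- a conservative move that is appropriate because the lemma only asks for an upper bound on the increase, and because we have no lower bound on $\|\nabla f(x_{t-1})\|^2$ in this regime.

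To control $E\|g_t\|^2$ without reintroducing $\|\nabla f\|^2$, I will use Assumption 1(3): since $\|\nabla f_z(\cdot)\|\le l$ for every $z$, we have $\|\nabla f_{I_t}(\cdot)\|\le l$ and $\|\tilde{\mu}\|=\|\nabla f(\tilde{x}^{k-1})\|\le l$, so the triangle inequality gives $\|g_t\|\le 3l$ and hence $E\|g_t\|^2\le 9l^2$. This yields the uniform per-step bound $E[f(x_t)-f(x_{t-1})]\le 9L\eta^2 l^2/2$, which is constant in both $t$ and $x_{t-1}$. Because $N$ is drawn independently of the minibatches $\{I_t\}$, a Wald-style identity (write $\sum_{t=1}^N h_t=\sum_{t\ge 1}\mathbf{1}_{t\le N}\,h_t$ and pull the expectation inside using independence) yields $E[f(\tilde{x}^k)-f(\tilde{x}^{k-1})]\le E[N]\cdot 9L\eta^2 l^2/2=(n/b)\cdot 9L\eta^2 l^2/2$.

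The final step is to substitute $\eta L=\gamma(b/n)^{2/3}$, equivalently $\eta^2 L=\gamma^2(b/n)^{4/3}/L$, so that $(n/b)\cdot\eta^2 L=\gamma^2(b/n)^{1/3}/L$ and the bound becomes $(9\gamma^2 l^2/(2L))(b/n)^{1/3}\le (5l^2\gamma^2/L)(b/n)^{1/3}$ since $9/2\le 5$. The hypotheses $n\ge 8b$ and $\gamma\le 1/3$ enter only to keep $\eta L\le 1/3$, which is consistent with ignoring any $\|\nabla f(x_{t-1})\|^2$ feedback. The main obstacle I anticipate is the choice of variance bound in step two: a sharper minibatch bound $E\|g_t-\nabla f(x_{t-1})\|^2=O(l^2/b)$ is available and tempting, but carrying the extra $1/b$ through the telescoping would turn the $(n/b)\cdot(b/n)^{4/3}$ cancellation into $(b/n)^{1/3}/b$, producing the wrong exponent. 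The correct move is precisely the crude $O(l^2)$ bound on $\|g_t\|^2$: it is what makes the factor $n/b$ from $E[N]$ cancel cleanly against $(b/n)^{4/3}$ from $\eta^2 L$ to give the stated $(b/n)^{1/3}$ rate.
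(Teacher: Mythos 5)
Your proposal is correct and takes essentially the same route as the paper: bound the expected per-inner-step increase uniformly by $O(L\eta^2 l^2)$ using boundedness of the stochastic gradients, then multiply by $E[N_k]=n/b$ and substitute $\eta L=\gamma(b/n)^{2/3}$ so that the powers of $n/b$ collapse to $(b/n)^{1/3}$. The only cosmetic difference is the per-step constant: you bound $\|g_t\|\le 3l$ directly and get $\tfrac{9}{2}L\eta^2 l^2$, whereas the paper splits $v_t$ into $\nabla f(x_{t-1})$ plus the zero-mean error $\xi_t$ and bounds them separately by $l^2$ and $4l^2$ to get $5L\eta^2 l^2$; both are under the stated constant $5$.
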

By setting parameter $ g_{thres} \geq  \frac{10l^2\gamma^2}{L \delta}(\frac{b}{n})^{\frac{1}{3}}$, we can further obtain that:
\begin{equation} \label{increase_function_value}
E[f(\tilde{x}^k) - f(\tilde{x}^{k-1})] \leq \frac{\delta g_{thres}}{2}.
\end{equation}

%Note, it need to be tuned!!!!!!!!
\noindent\textbf{Proof sketch of Theorem \ref{mainTheroem}}
%In order to cooperate with constraints derived for $g_{thres}$ to satisfy inequalities (\ref{decrease_largegradient}), (\ref{decrease_saddle_points}) and (\ref{increase_function_value}), $g_{thres}$ should be carefully selected. We argue that constant $\gamma$ and $C_1$ exist such that 
%$\frac{10l^2\gamma^2}{L \delta}(\frac{b}{n})^{\frac{1}{3}} \leq \frac{n}{b}\frac{\eta^2 \epsilon_h^3 \tau r}{C_1 12 l \rho C} log^{-1}( \frac{1}{\epsilon_h})\leq (min( \frac{1}{2} (\frac{b}{n})^{\frac{1}{3}}, \frac{1}{5})) \frac{\gamma}{L}( \frac{b}{n})^\frac{1}{3} \epsilon_g^2$, 
%where $\gamma$ is a small constant and $C_1$ is a large constant.
With appropriate values of $\gamma$ and $C_1$, we have that $g_{thres} = \frac{n}{b}\frac{\eta^2 \epsilon_h^3 \tau r}{C_1 12 l \rho C} log^{-1}( \frac{1}{\epsilon_h})$ and Inequalities (\ref{decrease_largegradient}), (\ref{decrease_saddle_points}) and (\ref{increase_function_value}) hold.
We define event 
$ A_t := \{\|\nabla f( \tilde{x}^{t} ) \| \geq \epsilon_g ~or~ \lambda_{min}( \nabla^2 f( \tilde{x}^{t}  )) \leq -\epsilon_h\}.$
Let $R$ be a random variable representing the ratio of second-order stationary points visited by our algorithm at snapshots in the past T epochs. That is,
$R = \frac{1}{T}\sum_{t =1 }^{T} \mathbf{I} (A_t^c),$
where $\mathbf{I}(\cdot)$ is an indicator function. Let $P_t$ be the probability that $A_t$ occurs and $1-P_t$ be the probability that its complement $A_t^c$ occurs. Our goal is that
$E( R) = \frac{1}{T} \sum_{t=1}^{T}( 1 - P_t)$ holds
with high probability. That is,
$E( R) = \frac{1}{T} \sum_{t=1}^{T}( 1 - P_t) \geq 1- \delta, $
or
$\frac{1}{T}\sum_{t=1}^{T} P_t \leq \delta.$

Unfortunately, estimating the probabilities $P_t$ for $t \in \{1, ..., T\}$ is very difficult. However, 
%by noticing that sufficient decrease of function value $g_{thres}$ can be guaranteed whenever event $\mathcal{A}_t$ is true and the increase of function value is not large than $\frac{\delta g_{thres}}{2}$ otherwise,   
we successfully obtain an upper bound:
$\frac{1}{T}\sum_{t=1}^{T} P_t \leq \frac{f( x_0) - f^*}{Tg_{thres}} + \frac{\delta}{2}.$
Then by letting $\frac{f( x_0) - f^*}{Tg_{thres}} + \frac{\delta}{2} \leq \delta$, we have that
\begin{align*}
T = \frac{2( f( x_0) - f^*)}{\delta g_{thres}} 
%= \frac{b}{n} \frac{288 C C_1\rho^2 l^4 ( f( x_0) - f^*)}{C_2\delta \eta^2 \tau^2 \epsilon_h^5} \log( \frac{1}{\epsilon_h})
= \frac{b}{n} \frac{288 C C_1 l^4 ( f( x_0) - f^*)}{C_2\delta \eta^2 \tau^2 \epsilon^2} \log( \frac{1}{ \sqrt{\rho}\epsilon^{\frac{2}{5}}}).
\end{align*}

%\textbf{Proof of Theorem \ref{theroem:generic}.}
%Lemma \ref{theroem:escaping} guarantees the decrease of function value per epoch in expectation around strict saddle points is $D_e$. The assumption about algorithm $ \mathcal{A} $ in Theorem \ref{theroem:generic} makes sure that $ \mathcal{A} $ will drop function value in $D_{\mathcal{A}( \epsilon_g)}$ per epoch in expectation when $\|\nabla f( x)\| > \epsilon_g$ and won't increase function value too much in expectation when  $\|\nabla f( x) \|\leq \epsilon_g$ and $\lambda_{min}( \nabla^2 f( x) ) \leq  \epsilon_h$. Then let $g_{thres} = min(D_e, D_{\mathcal{A}( \epsilon_g)})$ and follow previous argument for main Theorem , we can obtain the desired result. 

\section{Empirical evaluations}
In this section, we present simulation results to verify that the proposed CNC-SCSG can escape saddle points faster than its competitors.
In the simulations, we consider the following nonconvex finite-sum problem:
$\min_{x \in \mathbb{R}^d } {f( x ) = \frac{1}{n}\sum_{i=1}^{n} \frac{1}{1+ exp( - y_i z_i^T x)} + \lambda\sum_{j =1 }^{d} \frac{x_j^2}{1+ x_j^2}}$
where $y_i \in \{0, 1\}$ denotes the label, $z_i \in \mathbb{R}^d$ denotes the feature data of the $i^{th}$ data point and $\lambda$ is a positive trade-off parameter.
Data points $\{z_i\}$ with labels $y_i$=$0$ were generated from a normal distribution $N( \mu_0$=$\mathbf{0}, I)$ while data points $\{z_i\}$ with labels $y_i$=$1$ were generated from $N( \mu_1$=$\mathbf{1}, I)$.
The efficiency for escaping saddle points is measured by the number of epochs each algorithm uses for the escape.

We generated two simulation datasets: a low dimensional dataset with $n= 40 $ and $d = 4$, and a high dimensional dataset with $n= 200$ and $d=20$. 
We compared CNC-SCSG with three algorithms PGD \citep{jin2017escape}, CNC-SGD and CNC-GD \citep{daneshmand2018escaping}.
We did not include Neon-based (Neon2-based) algorithms \citep{xu2017first,allen2017neon2} because Neon (Neon2) so far provided no knowledge on how to set up the parameters $t$ and $F$ ($T$ and a distance $r$, resp.) in practice, and no empirical tests of the full algorithms were given in their original papers except in Neon (simulation was given only around saddle points which did not involve $t$ and $F$). However, without $t$ and $F$ ($T$ and $r$), we cannot correctly implement the algorithm for comparison. The complete experimental setups for each of the tested algorithms can be found in Appendix A.

\begin{figure}[t]
	\centering
	\includegraphics[width=0.9\textwidth]{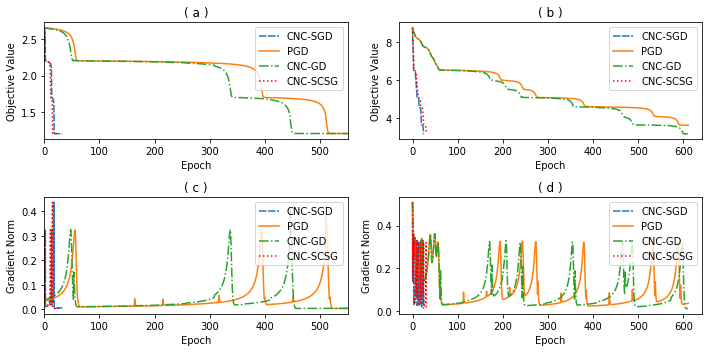}
	\caption{Comparison of different methods on a synthetic problem. Plots (a) and (b) correspond to the objective value over the increase of epochs and plots (c) and (d) correspond to the gradient norm over number of epochs. Plots (a) and (c) are for the low dimension dataset, and plots (b) and (d) are for the high dimension dataset. %For the tested datasets, our CNC-SCSG can always escape saddle points with much fewer epochs, comparing with PGD and CNC-GD.
	}
	\label{fig:1}
\end{figure}

\textbf{Results.} As shown in Figure \ref{fig:1}, for the tested datasets, our proposed CNC-SCSG algorithm consistently used significantly smaller epochs to escape saddle points comparing with PGD and CNC-GD. For example in Figure \ref{fig:1}(a), for the saddle point at objective value around 2.2, CNC-GD and PGD used $\sim$250 and $\sim$350 epochs to escape it respectively while CNC-SCSG used only $\sim$10 epochs.
Compared with CNC-SGD, CNC-SCSG have comparable numbers of epochs although in theory its convergence rate is better than CNC-SGD.
We conjecture that it is because the objective function tested in the simulation is relatively simple and has a small number of intervals with large gradients.
%When the objective function is more complicated or there are many intervals with large gradients, SCSG can decrease function values faster than SGD \citep{lei2017non}, and thus we believe that CNC-SCSG can have better performance than CNC-SGD.
%We leave the study as an important future work. 
As in  Figure \ref{fig:1}(c) and (d), we also observed that CNC-based algorithms including CNC-SCSG are more stable than isotropic noise perturbed algorithm PGD, by noticing that PGD shows spikes in both simulations.

%This is because the tested objective function is simple and  the variance of SGD introduced by stochastic sampling is well-controlled for the given large $\lambda$.
%When the domain of $f$ with large gradient dominate the function $f$,
%take big portion of whole domain or variance of SGD is large, 
%we anticipate that CNC-SCSG will have better performance than CNC-SG, because SCSG decreases function value faster than SGD \citep{lei2017non}.
%We will 

%and we argue that CNC-SCSG performs better than CNC-SGD in complicated objective functions, for which regime of large gradient dominates the domain of function $f( x )$.%From (b) (d) in Figure \ref{fig:1}, we also can find the efficiency of anisotropic noise generated by SGD by comparing the performance of PGD and CNC-GD, where CNC-GD almost always get ride of saddle points faster than PGD. 

\begin{remark}
\vspace{-3pt}
	Considering that $N_k$ can be large, wasting stochastic gradient calculation may occur. 
	Though Algorithm \ref{alg:SVRG_SGD} theoretically requires the first-order condition check (in the proof),  in practice we can remove the checking  and insert one step SGD for every $t_{thres}$ iteration, where $t_{thres}$ is moderately smaller than $n$, e.g. $n/4$,  Hence, Lines 4-9 and Line 17 in Algorithm \ref{alg:SVRG_SGD} can be moved into inner loop when first condition requirement check is removed. 
    without hurting the algorithm due to the stability of SGD. 
	( unlike PGD shows spikes in Figure \ref{fig:1} ( c ) and ( d ))
\end{remark}

%\vspace{-0.2in}
\section{Conclusion}
%\vspace{-0.1in}
We propose the CNC-SCSG algorithm to escape saddle points for nonconvex optimization so it converges to second-order stationary points. The CNC-SCSG algorithm can achieve the same convergence rate as CNC-GD.% and both of them have a convergence rate that is dimension free.
We also generalize the CNC-SCSG algorithm into a framework like the Neon  or Neon2 method, which can be readily used in conjunction with any existing algorithm that converges to first-order stationary points. We observe that in all existing first-order oracle algorithms, when to break from the escaping process and the evaluation of conditions for second-order stationary points can be issues due to heavy dependence on algorithm parameters, which are often clueless to find appropriate values in practical non-convex optimization problems. We will investigate this problem as the future work.
\section*{Acknowledgments}

This work was funded by NSF grants CCF-1514357, DBI-1356655, and IIS-1718738 to Jinbo Bi, who was also supported by NIH grants K02-DA043063 and R01-DA037349.
\bibliography{nonconvex} 

\begin{thebibliography}{37}
\providecommand{\natexlab}[1]{#1}
\providecommand{\url}[1]{\texttt{#1}}
\expandafter\ifx\csname urlstyle\endcsname\relax
  \providecommand{\doi}[1]{doi: #1}\else
  \providecommand{\doi}{doi: \begingroup \urlstyle{rm}\Url}\fi

\bibitem[Agarwal and Bottou(2015)]{agarwal2014lower}
Alekh Agarwal and Leon Bottou.
\newblock A lower bound for the optimization of finite sums.
\newblock In \emph{International Conference on Machine Learning}, pages 78--86,
  2015.

\bibitem[Agarwal et~al.(2017)Agarwal, Allen-Zhu, Bullins, Hazan, and
  Ma]{agarwal2017finding}
Naman Agarwal, Zeyuan Allen-Zhu, Brian Bullins, Elad Hazan, and Tengyu Ma.
\newblock Finding approximate local minima faster than gradient descent.
\newblock In \emph{Proceedings of the 49th Annual ACM SIGACT Symposium on
  Theory of Computing}, pages 1195--1199. ACM, 2017.

\bibitem[Allen-Zhu(2017)]{allen2017natasha}
Zeyuan Allen-Zhu.
\newblock Natasha: Faster non-convex stochastic optimization via strongly
  non-convex parameter.
\newblock In \emph{Proceedings of the 34th International Conference on Machine
  Learning-Volume 70}, pages 89--97. JMLR. org, 2017.

\bibitem[Allen-Zhu(2018)]{allen2018natasha}
Zeyuan Allen-Zhu.
\newblock Natasha 2: Faster non-convex optimization than sgd.
\newblock In \emph{Advances in Neural Information Processing Systems}, pages
  2676--2687, 2018.

\bibitem[Allen-Zhu and Li(2018)]{allen2017neon2}
Zeyuan Allen-Zhu and Yuanzhi Li.
\newblock Neon2: Finding local minima via first-order oracles.
\newblock In \emph{Advances in Neural Information Processing Systems}, pages
  3720--3730, 2018.

\bibitem[Anandkumar and Ge(2016)]{anandkumar2016efficient}
Animashree Anandkumar and Rong Ge.
\newblock Efficient approaches for escaping higher order saddle points in
  non-convex optimization.
\newblock In \emph{Conference on Learning Theory}, pages 81--102, 2016.

\bibitem[Carmon et~al.(2017)Carmon, Duchi, Hinder, and
  Sidford]{carmon2017convex}
Yair Carmon, John~C Duchi, Oliver Hinder, and Aaron Sidford.
\newblock Convex until proven guilty: Dimension-free acceleration of gradient
  descent on non-convex functions.
\newblock In \emph{Proceedings of the 34th International Conference on Machine
  Learning-Volume 70}, pages 654--663. JMLR. org, 2017.

\bibitem[Carmon et~al.(2018)Carmon, Duchi, Hinder, and
  Sidford]{carmon2016accelerated}
Yair Carmon, John~C Duchi, Oliver Hinder, and Aaron Sidford.
\newblock Accelerated methods for nonconvex optimization.
\newblock \emph{SIAM Journal on Optimization}, 28\penalty0 (2):\penalty0
  1751--1772, 2018.

\bibitem[Curtis et~al.(2017)Curtis, Robinson, and Samadi]{curtis2017trust}
Frank~E Curtis, Daniel~P Robinson, and Mohammadreza Samadi.
\newblock A trust region algorithm with a worst-case iteration complexity of
  $o( \epsilon^{-3/2}$ for nonconvex optimization.
\newblock \emph{Mathematical Programming}, 162\penalty0 (1-2):\penalty0 1--32,
  2017.

\bibitem[Daneshmand et~al.(2018)Daneshmand, Kohler, Lucchi, and
  Hofmann]{daneshmand2018escaping}
Hadi Daneshmand, Jonas Kohler, Aurelien Lucchi, and Thomas Hofmann.
\newblock Escaping saddles with stochastic gradients.
\newblock In \emph{International Conference on Machine Learning}, pages
  1163--1172, 2018.

\bibitem[Defazio et~al.(2014)Defazio, Bach, and
  Lacoste-Julien]{defazio2014saga}
Aaron Defazio, Francis Bach, and Simon Lacoste-Julien.
\newblock Saga: A fast incremental gradient method with support for
  non-strongly convex composite objectives.
\newblock In \emph{Advances in neural information processing systems}, pages
  1646--1654, 2014.

\bibitem[Fang et~al.(2018)Fang, Li, Lin, and Zhang]{fang2018spider}
Cong Fang, Chris~Junchi Li, Zhouchen Lin, and Tong Zhang.
\newblock Spider: Near-optimal non-convex optimization via stochastic
  path-integrated differential estimator.
\newblock In \emph{Advances in Neural Information Processing Systems}, pages
  686--696, 2018.

\bibitem[Fang et~al.(2019)Fang, Lin, and Zhang]{fang2019sharp}
Cong Fang, Zhouchen Lin, and Tong Zhang.
\newblock Sharp analysis for nonconvex sgd escaping from saddle points.
\newblock \emph{arXiv preprint arXiv:1902.00247}, 2019.

\bibitem[Ge et~al.(2015)Ge, Huang, Jin, and Yuan]{ge2015escaping}
Rong Ge, Furong Huang, Chi Jin, and Yang Yuan.
\newblock Escaping from saddle points online stochastic gradient for tensor
  decomposition.
\newblock In \emph{Conference on Learning Theory}, pages 797--842, 2015.

\bibitem[Ge et~al.(2019)Ge, Li, Wang, and Wang]{ge2019stabilized}
Rong Ge, Zhize Li, Weiyao Wang, and Xiang Wang.
\newblock Stabilized svrg: Simple variance reduction for nonconvex
  optimization.
\newblock In \emph{Conference on learning theory}, pages 1394--1448. PMLR,
  2019.

\bibitem[Ghadimi(2016)]{ghadimi2016accelerated}
Lan~Guanghui Ghadimi, Saeed.
\newblock Accelerated gradient methods for nonconvex nonlinear and stochastic
  programming.
\newblock \emph{Mathematical Programming}, 156\penalty0 (1-2):\penalty0 59--99,
  2016.

\bibitem[Ghadimi et~al.(2016)Ghadimi, Lan, and Zhang]{ghadimi2016mini}
Saeed Ghadimi, Guanghui Lan, and Hongchao Zhang.
\newblock Mini-batch stochastic approximation methods for nonconvex stochastic
  composite optimization.
\newblock \emph{Mathematical Programming}, 155\penalty0 (1-2):\penalty0
  267--305, 2016.

\bibitem[Jin et~al.(2017)Jin, Ge, Netrapalli, Kakade, and
  Jordan]{jin2017escape}
Chi Jin, Rong Ge, Praneeth Netrapalli, Sham~M Kakade, and Michael~I Jordan.
\newblock How to escape saddle points efficiently.
\newblock In \emph{Proceedings of the 34th International Conference on Machine
  Learning-Volume 70}, pages 1724--1732. JMLR. org, 2017.

\bibitem[Jin et~al.(2018)Jin, Netrapalli, and Jordan]{jin2018accelerated}
Chi Jin, Praneeth Netrapalli, and Michael~I Jordan.
\newblock Accelerated gradient descent escapes saddle points faster than
  gradient descent.
\newblock In \emph{Conference On Learning Theory}, pages 1042--1085, 2018.

\bibitem[Jin et~al.(2019)Jin, Netrapalli, Ge, Kakade, and
  Jordan]{jin2019stochastic}
Chi Jin, Praneeth Netrapalli, Rong Ge, Sham~M Kakade, and Michael~I Jordan.
\newblock Stochastic gradient descent escapes saddle points efficiently.
\newblock \emph{arXiv preprint arXiv:1902.04811}, 2019.

\bibitem[Johnson and Zhang(2013)]{johnson2013accelerating}
Rie Johnson and Tong Zhang.
\newblock Accelerating stochastic gradient descent using predictive variance
  reduction.
\newblock In \emph{Advances in neural information processing systems}, pages
  315--323, 2013.

\bibitem[Kuczy{\'n}ski and Wo{\'z}niakowski(1992)]{kuczynski1992estimating}
Jacek Kuczy{\'n}ski and Henryk Wo{\'z}niakowski.
\newblock Estimating the largest eigenvalue by the power and lanczos algorithms
  with a random start.
\newblock \emph{SIAM journal on matrix analysis and applications}, 13\penalty0
  (4):\penalty0 1094--1122, 1992.

\bibitem[Lei and Jordan(2017)]{lei2016less}
Lihua Lei and Michael Jordan.
\newblock Less than a single pass: Stochastically controlled stochastic
  gradient.
\newblock In \emph{Artificial Intelligence and Statistics}, pages 148--156,
  2017.

\bibitem[Lei et~al.(2017)Lei, Ju, Chen, and Jordan]{lei2017non}
Lihua Lei, Cheng Ju, Jianbo Chen, and Michael~I Jordan.
\newblock Non-convex finite-sum optimization via scsg methods.
\newblock In \emph{Advances in Neural Information Processing Systems}, pages
  2348--2358, 2017.

\bibitem[Li(2019)]{li2019ssrgd}
Zhize Li.
\newblock Ssrgd: Simple stochastic recursive gradient descent for escaping
  saddle points.
\newblock \emph{arXiv preprint arXiv:1904.09265}, 2019.

\bibitem[Nesterov(2006)]{nesterov2006cubic}
Polyak Boris~T Nesterov, Yurii.
\newblock Cubic regularization of newton method and its global performance.
\newblock \emph{Mathematical Programming}, 108\penalty0 (1):\penalty0 177--205,
  2006.

\bibitem[Nesterov(2013)]{nesterov2013introductory}
Yurii Nesterov.
\newblock \emph{Introductory lectures on convex optimization: A basic course},
  volume~87.
\newblock Springer Science \& Business Media, 2013.

\bibitem[Oja(1982)]{oja1982simplified}
Erkki Oja.
\newblock Simplified neuron model as a principal component analyzer.
\newblock \emph{Journal of mathematical biology}, 15\penalty0 (3):\penalty0
  267--273, 1982.

\bibitem[Reddi et~al.(2018)Reddi, Zaheer, Sra, Poczos, Bach, Salakhutdinov, and
  Smola]{reddi2018generic}
Sashank Reddi, Manzil Zaheer, Suvrit Sra, Barnabas Poczos, Francis Bach, Ruslan
  Salakhutdinov, and Alex Smola.
\newblock A generic approach for escaping saddle points.
\newblock In \emph{International Conference on Artificial Intelligence and
  Statistics}, pages 1233--1242, 2018.

\bibitem[Reddi et~al.(2016)Reddi, Hefny, Sra, Poczos, and
  Smola]{reddi2016stochastic}
Sashank~J Reddi, Ahmed Hefny, Suvrit Sra, Barnabas Poczos, and Alex Smola.
\newblock Stochastic variance reduction for nonconvex optimization.
\newblock In \emph{International conference on machine learning}, pages
  314--323, 2016.

\bibitem[Simsekli et~al.(2019)Simsekli, Sagun, and
  Gurbuzbalaban]{simsekli2019tail}
Umut Simsekli, Levent Sagun, and Mert Gurbuzbalaban.
\newblock A tail-index analysis of stochastic gradient noise in deep neural
  networks.
\newblock In \emph{International Conference on Machine Learning}, pages
  5827--5837, 2019.

\bibitem[Tripuraneni et~al.(2018)Tripuraneni, Stern, Jin, Regier, and
  Jordan]{tripuraneni2017stochastic}
Nilesh Tripuraneni, Mitchell Stern, Chi Jin, Jeffrey Regier, and Michael~I
  Jordan.
\newblock Stochastic cubic regularization for fast nonconvex optimization.
\newblock In \emph{Advances in Neural Information Processing Systems}, pages
  2904--2913, 2018.

\bibitem[Xu et~al.(2018)Xu, Rong, and Yang]{xu2017first}
Yi~Xu, Jing Rong, and Tianbao Yang.
\newblock First-order stochastic algorithms for escaping from saddle points in
  almost linear time.
\newblock In \emph{Advances in Neural Information Processing Systems}, pages
  5535--5545, 2018.

\bibitem[Zhang et~al.(2017)Zhang, Liang, and Charikar]{zhang2017hitting}
Yuchen Zhang, Percy Liang, and Moses Charikar.
\newblock A hitting time analysis of stochastic gradient langevin dynamics.
\newblock \emph{Proceedings of Machine Learning Research vol}, 65:\penalty0
  1--43, 2017.

\bibitem[Zhou et~al.(2018{\natexlab{a}})Zhou, Xu, and Gu]{zhou2018finding}
Dongruo Zhou, Pan Xu, and Quanquan Gu.
\newblock Finding local minima via stochastic nested variance reduction.
\newblock \emph{arXiv preprint arXiv:1806.08782}, 2018{\natexlab{a}}.

\bibitem[Zhou et~al.(2018{\natexlab{b}})Zhou, Xu, and Gu]{zhou2018stochastic}
Dongruo Zhou, Pan Xu, and Quanquan Gu.
\newblock Stochastic variance-reduced cubic regularized newton method.
\newblock In \emph{International Conference on Machine Learning}, pages
  5985--5994, 2018{\natexlab{b}}.

\bibitem[Zhu et~al.(2019)Zhu, Wu, Yu, Wu, and Ma]{zhu2018anisotropic}
Zhanxing Zhu, Jingfeng Wu, Bing Yu, Lei Wu, and Jinwen Ma.
\newblock The anisotropic noise in stochastic gradient descent: Its behavior of
  escaping from sharp minima and regularization effects.
\newblock In \emph{International Conference on Machine Learning}, pages
  7654--7663, 2019.

\end{thebibliography}
\bibliographystyle{plain}
\newpage
\appendix 
\section{Comparison of algorithms for non-convex optimization}
\begin{table}[H]
	\tiny
	\renewcommand{\arraystretch}{2}
	\begin{center}
		\caption{Algorithms for Non-convex Optimization }
		\begin{tabular}{p{1.2cm}|p{1.cm}|p{3.4cm}|p{0.5cm}|p{0.9cm}|p{4.5cm}|p{1.9cm}}
			\specialrule{.2em}{.1em}{.1em} 
			Guarantees & Oracle & Algorithm & $\epsilon_g$ & $\epsilon_h$ & Time Complexity & Dependence \\
			&&&&&( iteration $\cdot$ Oracle/iter ) &\\
			\specialrule{.15em}{.07em}{.07em}
			\multirow{4}{*}{\shortstack{First-order\\ Stationary \\ Point}}  
			& \multirow{4}{*}{ Gradient } & GD;  SGD  \citep{nesterov2013introductory} &  $\epsilon$& no & $O(\epsilon^{-2})\cdot O(n)$; $O(\epsilon^{-4})\cdot O(1)$  &	N/A\\
			\cline{3-7}
			& & AGD; SAGD \citep{ghadimi2016accelerated}& $\epsilon$ &no  &	$O(\epsilon^{-2})\cdot O(n)$; $O(\epsilon^{-4})\cdot O(1)$&N/A\\ 
			\cline{3-7}
			& & Guarded-AGD \citep{carmon2017convex} &$\epsilon$ &no & $O(\epsilon^{-\frac{7}{4}}\log(\epsilon^{1}))\cdot O(n)$ &N/A\\
			\cline{3-7}
			& & SVRG \citep{reddi2016stochastic} &$\epsilon$ &no & $O(\epsilon^{-2})\cdot O(n)$  &N/A\\
			\cline{3-7}
			& & SCSG \citep{lei2017non} &$\epsilon$ &no &  $O(min(\epsilon^{-\frac{10}{3}},n^{\frac{2}{3}}\epsilon^{-2}))$ &N/A\\
			
			\specialrule{.2em}{.05em}{.05em} 
			
			\multirow{7}{*}{\shortstack{2nd-order \\ Stationary\\ Point \\ ( Local \\ Minimum )} }
			& \multirow{5}{*}{Gradient} &PSGD \citep{ge2015escaping}&  $\epsilon$&$\sqrt{\rho\epsilon}$& $O(\epsilon_g^{-4})\cdot O(1)$ & $poly(d)$\\
			\cline{3-7}
			&	& PGD  \citep{jin2017escape} & $\epsilon$ & $\sqrt{\rho\epsilon}$ & $O(\epsilon^{-2}\log (\frac{d}{\epsilon}))\cdot O(n)$ & $poly\log(d)$ \\
			\cline{3-7}
			&	& PAGD  \citep{jin2018accelerated} & $\epsilon$ &$\sqrt{\rho\epsilon}$  &$O(\epsilon^{-\frac{7}{4}}\log^6 (\frac{d}{\epsilon}))\cdot O(n)$ &$poly\log(d)$\\
			\cline{3-7}
			&	& CNC-PGD \citep{daneshmand2018escaping} & $\epsilon$ &$\sqrt{\rho}\epsilon^{\frac{2}{5}}$  & $O(\epsilon^{-2}\log (\epsilon^{-1}))\cdot O(n)$ & free\\
			\cline{3-7}
			&	& CNC-PSGD \citep{daneshmand2018escaping} & $\epsilon$ & $\sqrt{\rho}\epsilon^{\frac{2}{5}}$ & $O(\epsilon^{-4}\log^2 (\epsilon^{-1}))\cdot O(1)$& free\\
			\cline{3-7}
			&	& NEON+SVRG \citep{xu2017first} & $\epsilon$ & $\epsilon^{\frac{1}{2}}$ & $O(n^{\frac{2}{3}}\epsilon^{-2}+n\epsilon^{-\frac{3}{2}}+\epsilon^{-\frac{11}{3}})$ &$poly\log(d)$\\
			\cline{3-7}
			&	& NEON2+CDHS \citep{allen2017neon2}& $\epsilon_g$ & $\epsilon_h$  & $O(n^{\frac{2}{3}}\epsilon_g^{-2}+n\epsilon_h^{-3}+n^{\frac{3}{4}}\epsilon_h^{-\frac{7}{2}})$ & $poly\log(d)$ \\
			\cline{3-7}
			&	& Natasha2 by \citep{allen2018natasha}& $\epsilon_g$ & $\epsilon_h$   & $O(\epsilon_h^{-5} + \epsilon^{-\frac{13}{4}} + \epsilon_g^{-3}\epsilon_h^{-1})$& $poly\log(d)$ \\
			\cline{3-7}
			&	& SPIDER+NEON2 \citep{fang2018spider}& $\epsilon$& $\epsilon^\frac{1}{2}$  & $O(min(n^\frac{1}{2} \epsilon^{-2} + \epsilon^{-\frac{5}{2}} , \epsilon^{-3}))$& $poly\log(d)$\\
			\cline{3-7}
			&	& SNVRG+NEON2 \citep{zhou2018finding} &$\epsilon_g$ & $\epsilon_h$  &$O(n^{\frac{1}{2}}\epsilon_g^{-2} + n \epsilon_h^{-3} + n^{\frac{3}{4}} \epsilon_h^{-\frac{7}{2}})$ &$poly\log(d)$\\
			\cline{3-7}
			& & Stablized SVRG\citep{ge2019stabilized} &$\epsilon_g$& $\epsilon_h$ &$O(n^{\frac{2}{3}}\epsilon_g^{-2} + n\epsilon_h^{-3} + n^{\frac{2}{3}}\epsilon_h^{-4})$ &$poly\log(d)$\\
			\cline{3-7}
			& & SSRGD\citep{li2019ssrgd} &$\epsilon_g$& $\epsilon_h$ &$O(n^{\frac{1}{2}}\epsilon_g^{-2} + n^{\frac{1}{2}}\epsilon_h^{-4} + n\epsilon_h^{-3})$ &$poly\log(d)$\\
			\cline{2-7}
				
			& \multirow{2}{*}{ \shortstack{Hessian\\-vector}}  & NCD+AGD\citep{carmon2016accelerated} &  $\epsilon$& $\epsilon^\frac{1}{2}$  & $O(\epsilon^{-\frac{7}{4}} log( \epsilon^{-1}))\cdot T_{Hv}$&N/A \\
			\cline{3-7}
			&	& FastCubic by \citep{agarwal2017finding}&$\epsilon$& $\epsilon^\frac{1}{2}$  & $O(\epsilon^{-\frac{3}{2}} n + \epsilon^{-\frac{7}{4}} n^{\frac{3}{4}})\cdot T_{Hv}$&N/A \\
			\cline{3-7}
			&	& SCubic by \citep{tripuraneni2017stochastic} & $\epsilon$& $\sqrt{\rho \epsilon}$  & $O(\epsilon^{-\frac{7}{2}})\cdot T_{Hv}$&  N/A \\
			\cline{3-7}
			&	& SVRCubic by \citep{zhou2018stochastic} & $\epsilon$& $\sqrt{\rho \epsilon}$  & $O(\epsilon^{-\frac{3}{2}})\cdot T_{cubic}$&  N/A \\
			\cline{2-7}
			
			& \multirow{2}{*}{ \shortstack{Hessian}}  & Cubic Alg \citep{nesterov2006cubic} & $\epsilon$&$\sqrt{\epsilon}$  &$O(\epsilon^{-\frac{3}{2}})\cdot T_{cubic}$ & N/A \\
			\cline{3-7}
			&	& Trust Region \citep{curtis2017trust}&  $\epsilon$&$\sqrt{\epsilon}$  &$O(\epsilon^{-\frac{3}{2}})\cdot T_{H}$&  N/A\\

			\specialrule{.2em}{.05em}{.05em}

		\end{tabular}
	\end{center}
	\vspace{-5mm}
\end{table}
Here $T_{Hv}$ is the time for a Hassian vector product oracle and it is $O(d)$ as discussed by \citep{agarwal2017finding}, and $T_{cubic}$ is the time for each iteration of the cubic algorithm.
Under the strict saddle point assumption, any second-order stationary point will be a local minimizer.
Additionally, $\| \nabla f( x ) \| \leq \epsilon_g$,  $\lambda_{min} ( \nabla^2 f( x ) ) \geq -\epsilon_h$, and $N/A$ means not defined in the corresponding reference.

\section{The experimental setup}
In order to demonstrate the efficiency of anisotropic noise generated by SGD for high dimensional data, we generated two simulation datasets: I) a low dimensional dataset with $n= 40 $ and $d = 4$; and II) a high dimensional dataset with $n= 200$ and $d=20$. For both datasets, $\lambda=0.5$. The stepsizes for both SCSG and GD are 0.5, and the stepsizes for the SGD in both CNC-SCSG and CNC-GD are 2. Noise for PGD was uniformly drawn from the sphere of an Eucliden ball with radius 0.05. When the first order condition is satisfied, noise injection or a SGD jumping step was taken only if they had not been taken in the previous 50 iterations.  

\section{Detailed proofs}
\subsection{Proofs of lemmas}

\begin{lemma}\label{series}
For all $0<\beta<1$, we have bounded series:
\begin{equation}
\sum_{i=1}^{t}(1+\beta)^{t-i}\leq 2\beta^{-1}(1+\beta)^t;
\end{equation}
\begin{equation}
\sum_{i=1}^{t}i(1+\beta)^{t-i}\leq 2\beta^{-2}(1+\beta)^t.
\end{equation}
\end{lemma}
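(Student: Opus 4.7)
The plan is to factor out $(1+\beta)^t$ from both sums and reduce each to a classical geometric (resp.\ arithmetico-geometric) series with ratio $r := (1+\beta)^{-1}$, which lies in $(1/2,1)$ because $0<\beta<1$. Both inequalities then follow from textbook closed-form identities.

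For the first inequality, I would substitute $j = t-i$ to rewrite $\sum_{i=1}^{t}(1+\beta)^{t-i} = \sum_{j=0}^{t-1}(1+\beta)^{j}$, then apply the finite geometric sum formula to obtain $\frac{(1+\beta)^t - 1}{\beta} \leq \frac{(1+\beta)^t}{\beta} \leq 2\beta^{-1}(1+\beta)^t$. (The factor of $2$ is slack here; $\beta^{-1}(1+\beta)^t$ would already suffice, but the loosened form is convenient for the way the lemma is applied in the rest of the paper.)

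For the second inequality, I would write
\[
\sum_{i=1}^{t} i\,(1+\beta)^{t-i} \;=\; (1+\beta)^t \sum_{i=1}^{t} i\,r^{i} \;\leq\; (1+\beta)^t \sum_{i=1}^{\infty} i\,r^{i} \;=\; (1+\beta)^t\cdot\frac{r}{(1-r)^{2}},
\]
where the infinite sum is the standard identity $\sum_{i\geq 1} i r^i = r/(1-r)^2$ valid for $|r|<1$. Substituting $r = (1+\beta)^{-1}$ and $1-r = \beta/(1+\beta)$ gives $r/(1-r)^2 = (1+\beta)/\beta^2$. Finally, using $1+\beta < 2$ (which is exactly where the hypothesis $\beta < 1$ enters) yields $(1+\beta)^t\cdot(1+\beta)/\beta^2 \leq 2\beta^{-2}(1+\beta)^t$, as claimed.

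There is no real obstacle; the lemma is a routine manipulation of geometric-type series. The only subtlety worth flagging is the role of the hypothesis $\beta<1$: it is not needed for convergence (the ratio $r<1$ holds for every $\beta>0$), but it is precisely what converts the tight constant $(1+\beta)/\beta^2$ into the clean factor $2/\beta^2$ that appears in the stated bound.
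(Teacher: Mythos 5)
Your proof is correct and uses essentially the same idea as the paper: factor out $(1+\beta)^t$, bound the finite sums by infinite geometric (resp.\ arithmetico-geometric) series with ratio $(1+\beta)^{-1}$, and simplify. The paper's proof merely cites the two infinite-series identities without carrying out the substitution, so your version is the same argument spelled out in full, plus a useful remark on exactly where the hypothesis $\beta<1$ is needed.
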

\begin{proof}
From Taylor expansion, for any $|z|<1$, we have:
\begin{align}
\sum_{k=1}^{\infty}(z)^{k}&\leq 1/(1-z);\nonumber\\
\sum_{k=1}^{\infty}k(z)^{k}&\leq z/(1-z)^2.\nonumber
\end{align}
\end{proof}

\begin{lemma} \label{mini_batch_variance}
	Let $x_j \in \mathbb{R}^d$ be an arbitrary population of $M$ vectors with 
	$$ \sum_{j=1}^{M} x_j = 0.$$
	Further let $J$ be a uniform random subset of $\{ 1,2,\dots, M \}$ with size $m$. Then
	$$E[\| \frac{1}{m} \sum_{j \in J} x_j \|^2] = \frac{M-m}{( M -1)m}\frac{1}{M} \sum_{j =1}^{M} \| x_j \|^2 \leq \frac{\mathbf{I}( m < M)}{m} \frac{1}{M} \sum_{j =1 }^{M}\| x_j\|^2,$$
	where $\mathbf{I}(\cdot )$ is an indicator function.
\end{lemma}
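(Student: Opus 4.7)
The statement is a standard variance bound for sampling without replacement from a mean-zero population, so my plan is a direct second-moment calculation. I will expand the squared norm as a double sum, take expectations term by term using the inclusion probabilities of a uniformly random subset of fixed size, and then exploit the zero-sum hypothesis to collapse the off-diagonal part.

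\textbf{Step 1: Expansion.} Write
\begin{equation*}
\Bigl\|\tfrac{1}{m}\sum_{j\in J}x_j\Bigr\|^2
= \tfrac{1}{m^2}\sum_{j=1}^{M}\sum_{k=1}^{M}\mathbf{I}(j\in J)\,\mathbf{I}(k\in J)\,\langle x_j,x_k\rangle .
\end{equation*}
By linearity of expectation this reduces the problem to computing the first- and second-order inclusion probabilities $P(j\in J)=m/M$ and $P(j\in J,\,k\in J)=m(m-1)/(M(M-1))$ for $j\neq k$.

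\textbf{Step 2: Diagonal and off-diagonal.} Splitting the double sum into $j=k$ and $j\neq k$ parts yields
\begin{equation*}
E\!\left[\Bigl\|\tfrac{1}{m}\sum_{j\in J}x_j\Bigr\|^2\right]
= \tfrac{1}{m^2}\Bigl[\tfrac{m}{M}\sum_j\|x_j\|^2 + \tfrac{m(m-1)}{M(M-1)}\sum_{j\neq k}\langle x_j,x_k\rangle\Bigr].
\end{equation*}
Now I invoke the mean-zero assumption: since $\sum_j x_j=0$, we have $\sum_{j,k}\langle x_j,x_k\rangle = \|\sum_j x_j\|^2 = 0$, so $\sum_{j\neq k}\langle x_j,x_k\rangle = -\sum_j \|x_j\|^2$. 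Substituting this and simplifying the prefactor gives
\begin{equation*}
E\!\left[\Bigl\|\tfrac{1}{m}\sum_{j\in J}x_j\Bigr\|^2\right]
= \tfrac{1}{m}\Bigl(\tfrac{1}{M}-\tfrac{m-1}{M(M-1)}\Bigr)\sum_j\|x_j\|^2
= \tfrac{M-m}{(M-1)m}\cdot\tfrac{1}{M}\sum_j\|x_j\|^2,
\end{equation*}
which is the claimed equality.

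\textbf{Step 3: Upper bound.} For the inequality I note $\tfrac{M-m}{M-1}\leq 1$ whenever $m\geq 1$, and the ratio equals $0$ exactly when $m=M$ (in which case the left-hand side is trivially $0$ because $J=\{1,\dots,M\}$ and the sum vanishes by hypothesis). These two cases together give $\tfrac{M-m}{M-1}\leq \mathbf{I}(m<M)$, which yields the stated bound.

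\textbf{Anticipated obstacles.} None of substance; the only thing to be careful about is (i) getting the second-order inclusion probability right (hypergeometric rather than independent), and (ii) handling the edge case $m=M$ cleanly so that the indicator function on the right-hand side is justified rather than just asserted. Both are resolved at the arithmetic level, so I expect the proof to be short.
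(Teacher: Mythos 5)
Your proof is correct. The paper does not actually spell out a proof of this lemma—it simply defers to Lei et al. (2017), where the same second-moment expansion with hypergeometric inclusion probabilities is used, so your argument is the standard one rather than a different route. Your Step 2 arithmetic is right: $\frac{1}{m^2}\bigl(\frac{m}{M}-\frac{m(m-1)}{M(M-1)}\bigr)=\frac{1}{m}\cdot\frac{(M-1)-(m-1)}{M(M-1)}=\frac{M-m}{(M-1)m}\cdot\frac{1}{M}$, and your Step 3 treatment of the indicator, separating $m<M$ (where $\frac{M-m}{M-1}\le 1$) from $m=M$ (where the left side vanishes identically because $J$ is the whole zero-sum population), is exactly the care the statement requires. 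One tiny presentational point: in Step 2 you might explicitly note that $E[\mathbf{I}(j\in J)\mathbf{I}(k\in J)]=E[\mathbf{I}(j\in J)]$ when $j=k$, so that a reader does not wonder why the diagonal uses the first-order probability; this is implicit in your splitting but worth a word.
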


\begin{lemma} \label{Geom_dist_property}
	Let $N\sim Geom( \gamma)$. Then for any sequences $\{D_i\},$
	$$ E[D_N - D_{N+1}] = ( \frac{1}{\gamma} - 1)( D_0 - E[D_N]).$$
\end{lemma}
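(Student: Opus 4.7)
The plan is to prove the identity by direct computation using the probability mass function of the geometric distribution, followed by a single index shift on one of the two infinite series. Since $P(N = k) = (1-\gamma)\gamma^k$ for $k = 0, 1, 2, \dots$, both $E[D_N]$ and $E[D_{N+1}]$ are explicit power series in $\gamma$, and the key algebraic observation will be that shifting the index of $E[D_{N+1}]$ by one re-expresses it in terms of $E[D_N]$ plus a boundary term involving $D_0$.

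First I would write out the two expectations as
\begin{equation*}
E[D_N] = (1-\gamma)\sum_{k=0}^{\infty} \gamma^k D_k, \qquad E[D_{N+1}] = (1-\gamma)\sum_{k=0}^{\infty} \gamma^k D_{k+1}.
\end{equation*}
Then I would reindex the second sum by $j = k+1$, which gives
\begin{equation*}
E[D_{N+1}] = (1-\gamma)\sum_{j=1}^{\infty} \gamma^{j-1} D_j = \frac{1}{\gamma}\Bigl[(1-\gamma)\sum_{j=0}^{\infty}\gamma^j D_j - (1-\gamma) D_0\Bigr] = \frac{1}{\gamma}\bigl(E[D_N] - (1-\gamma) D_0\bigr).
\end{equation*}
Subtracting and collecting terms then yields
\begin{equation*}
E[D_N - D_{N+1}] = \Bigl(1 - \frac{1}{\gamma}\Bigr) E[D_N] + \frac{1-\gamma}{\gamma} D_0 = \frac{1-\gamma}{\gamma}\bigl(D_0 - E[D_N]\bigr),
\end{equation*}
which is exactly $(1/\gamma - 1)(D_0 - E[D_N])$ as claimed.

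There is really no hard step in this argument; the only point that deserves any care is absolute convergence of the two series, which is implicit in the assumption that $E[D_N]$ is well defined in the first place, and is automatic in every application of this lemma later in the paper (where $D_k$ is typically a bounded or at most polynomially growing quantity so that $\sum \gamma^k |D_k| < \infty$).
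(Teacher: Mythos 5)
Your computation is correct: the reindexing of $E[D_{N+1}]$ and the algebra that follows do give $(1/\gamma - 1)(D_0 - E[D_N])$, and the caveat about absolute convergence is the right one to note. The paper itself does not prove this lemma but simply defers to Lei et al.\ (2017), where the argument is the same direct geometric-series reindexing you give, so your proof matches the intended one.
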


Both Lemmas \ref{mini_batch_variance} and \ref{Geom_dist_property} can reference \citep{lei2017non} for proof.

\subsection{One-epoch analysis for SCSG }
The analysis for this section is inspired by  \citep{lei2017non} and is included here for completeness.
\begin{lemma} \label{variance_direction}
	Denote $v_t^k = \nabla f_{I_t}(x_{t}^k) - \nabla f_{I_t}( \tilde{x} ) + \tilde{\mu}$ to be the updating direction at $t-th$ iteration of $k-th$ epoch, then
	\begin{equation}
	E_{I_t} [\| v_t^k \|^2] \leq \frac{L^2}{b} \| x_{t}^k - x_0^{k} \|^2 + \|\nabla f( x_t^k) \|^2. 
	\end{equation}
\end{lemma}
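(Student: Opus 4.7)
The plan is to use a bias–variance decomposition for the SCSG search direction $v_t^k$. First I would observe that $v_t^k = \nabla f_{I_t}(x_t^k) - \nabla f_{I_t}(\tilde{x}^k) + \tilde{\mu}$, where $\tilde{\mu} = \nabla f(\tilde{x}^k)$ is a deterministic quantity given $\tilde{x}^k = x_0^k$. Taking the expectation over the uniform mini-batch $I_t$ and using linearity together with $E_{I_t}[\nabla f_{I_t}(x)] = \nabla f(x)$ for any fixed $x$, one gets $E_{I_t}[v_t^k] = \nabla f(x_t^k) - \nabla f(x_0^k) + \nabla f(x_0^k) = \nabla f(x_t^k)$. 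Hence
\begin{equation*}
E_{I_t}\|v_t^k\|^2 \;=\; \|\nabla f(x_t^k)\|^2 \;+\; E_{I_t}\bigl\|v_t^k - \nabla f(x_t^k)\bigr\|^2,
\end{equation*}
so the task reduces to bounding the variance term.

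Next I would rewrite the centered quantity as the empirical average
\begin{equation*}
v_t^k - \nabla f(x_t^k) \;=\; \frac{1}{b}\sum_{j\in I_t}\Bigl[\bigl(\nabla f_j(x_t^k) - \nabla f_j(x_0^k)\bigr) - \bigl(\nabla f(x_t^k) - \nabla f(x_0^k)\bigr)\Bigr].
\end{equation*}
Setting $y_j := \bigl(\nabla f_j(x_t^k) - \nabla f_j(x_0^k)\bigr) - \bigl(\nabla f(x_t^k) - \nabla f(x_0^k)\bigr)$, the population $\{y_j\}_{j=1}^n$ sums to zero, so Lemma \ref{mini_batch_variance} applies directly (with $M=n$, $m=b$) and gives
\begin{equation*}
E_{I_t}\Bigl\|\tfrac{1}{b}\sum_{j\in I_t} y_j\Bigr\|^2 \;\leq\; \frac{1}{b}\cdot\frac{1}{n}\sum_{j=1}^{n}\|y_j\|^2 \;\leq\; \frac{1}{b}\cdot\frac{1}{n}\sum_{j=1}^{n}\bigl\|\nabla f_j(x_t^k) - \nabla f_j(x_0^k)\bigr\|^2,
\end{equation*}
where the last step drops the centering (subtracting the mean only decreases the sum of squared norms). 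Finally, applying the $L$-Lipschitz gradient property of each $f_j$ (Assumption \ref{assumption4f}(1)) bounds every summand by $L^2\|x_t^k - x_0^k\|^2$, so the variance is at most $\tfrac{L^2}{b}\|x_t^k - x_0^k\|^2$, which combined with the bias term yields the claimed inequality.

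There is no real obstacle here; this is a standard SVRG-style variance calculation. The only subtlety worth flagging is to apply Lemma \ref{mini_batch_variance} to the \emph{centered} differences $y_j$ rather than to the raw differences $\nabla f_j(x_t^k) - \nabla f_j(x_0^k)$, since the lemma requires the population to sum to zero, and then to observe that replacing the centered sum of squares by the uncentered one is still a valid upper bound. Everything else is immediate.
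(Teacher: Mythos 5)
Your proof is correct and follows essentially the same route as the paper: decompose $v_t^k$ into its mean $\nabla f(x_t^k)$ plus a zero-mean deviation (the paper's $\xi_t^k$, your $v_t^k - \nabla f(x_t^k)$), apply Lemma~\ref{mini_batch_variance} to the centered per-sample gradient differences, drop the centering term, and finish with $L$-Lipschitzness of each $\nabla f_z$. The only cosmetic difference is that the paper names the deviation $\xi_t^k$ up front while you derive the same object via the bias–variance identity.
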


\begin{proof}
	Let $\xi_{t}^k = \nabla f_{I_t}(x_{t}^k) - \nabla f( x_{t}^k) - (\nabla f_{I_t}( \tilde{x} ) - \tilde{\mu}),$  easily derive $E_{I_t} [ \xi_t^k ] = 0$.
	
	\begin{align*}
	E_{I_t}[ \| v_t^k \|^2] &= E_{I_t}[ \| \nabla f_{I_t}( x_t^k) -  \nabla f_{I_t}( x_0^k) - ( \nabla f( x_t^k) - \nabla f( x_0^k)) + \nabla f( x_t^k) \|^2 ]\\
	&= E_{I_t}[ \| \xi_t^k + \nabla f( x_t^k) \|^2 ]\\
	&= E_{I_t} [\| \xi_t^k \|^2]+ 2<E_{I_t} [ \xi_t^k ] , \nabla f( x_t^k)>+ \|\nabla f( x_t^k) \|^2\\
	&=E_{I_t} [\| \xi_t^k \|^2]+ \|\nabla f( x_t^k) \|^2 \\
	\end{align*}
	By Lemma \ref{mini_batch_variance}:
	\begin{align*}
	E_{I_t} [\| \xi_t^k \|^2] &= E_{I_t} \| \nabla f_{I_t}( x_t^k) -  \nabla f_{I_t}( x_0^k) - ( \nabla f( x_t^k) - \nabla f( x_0^k)) \|^2 \\
	&\leq \frac{1}{b} \frac{1}{n} \sum_{i=1}^{n} \| \nabla f_{z}( x_t^k) -  \nabla f_{z}( x_0^k) - ( \nabla f( x_t^k) - \nabla f( x_0^k)) \|^2\\
	&=  \frac{1}{bn} (\sum_{z=1}^{n} \| \nabla f_{z}( x_t^k) -  \nabla f_{z}( x_0^k) \|^2 - \| ( \nabla f( x_t^k) - \nabla f( x_0^k)) \|^2) \\
	&\leq  \frac{1}{bn} \sum_{z=1}^{n} \| \nabla f_{z}( x_t^k) -  \nabla f_{z}( x_0^k) \|^2\\
	&\leq \frac{L^2}{b} \| x_{t}^k - x_0^{k} \|^2,
	\end{align*}
	where the first inequality is due to Lemma \ref{mini_batch_variance} and last inequality is due to L-smoothness.
\end{proof}

\begin{lemma}\label{inequality_1}
	Suppose $\eta L < 1$, then we can get
	\begin{equation} \label{inquality:1}
	\eta n ( 1 - \eta L ) E[\| \nabla  f( \tilde{x}^k)  \|^2] \leq b E[ f( \tilde{x}^{k-1}) - f( \tilde{x}^k)] + \frac{L^3\eta^2n}{2b} E[\| \tilde{x}^k - \tilde{x}^{k-1} \|^2].
	\end{equation}
\end{lemma}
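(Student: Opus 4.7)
My plan is to follow the standard SCSG one-epoch analysis: combine a per-iteration descent lemma derived from $L$-smoothness and Lemma \ref{variance_direction} with the geometric-stopping identity of Lemma \ref{Geom_dist_property}. For convenience, within a single outer epoch I will write $x_0 = \tilde{x}^{k-1}$, $x_{t+1} = x_t - \eta v_t$ with $v_t = \nabla f_{I_t}(x_t) - \nabla f_{I_t}(\tilde{x}^{k-1}) + \tilde{\mu}$, and $\tilde{x}^k = x_{N}$ where $N = N_{k-1} \sim \mathrm{Geom}(n/(n+b))$.

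First I would establish a per-iteration descent bound. By $L$-smoothness of $f$ (inherited from Assumption \ref{assumption4f}(1)), applied to $x_{t+1} = x_t - \eta v_t$, I get $f(x_{t+1}) - f(x_t) \le -\eta \langle \nabla f(x_t), v_t\rangle + \tfrac{L\eta^2}{2}\|v_t\|^2$. Taking the conditional expectation over the mini-batch $I_t$ and using $E[v_t \mid x_t] = \nabla f(x_t)$ collapses the inner product, while Lemma \ref{variance_direction} controls the quadratic term. After rearranging, this yields a one-step inequality of the form
\begin{equation*}
E\bigl[f(x_{t+1}) - f(x_t)\,\big|\,x_t\bigr] \;\le\; -\eta(1 - \eta L)\,\|\nabla f(x_t)\|^2 \;+\; \frac{L^3\eta^2}{2b}\,\|x_t - x_0\|^2,
\end{equation*}
which is valid under $\eta L < 1$.

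Next I would turn the per-step bound into an epoch bound using the geometric stopping trick. Evaluate the previous inequality at $t = N$ and take full expectation to obtain a bound on $E[f(x_{N+1}) - f(x_N)]$ in terms of $E[\|\nabla f(x_N)\|^2]$ and $E[\|x_N - x_0\|^2]$. The crucial step is now Lemma \ref{Geom_dist_property} with $D_t := f(x_t)$ and $\gamma = n/(n+b)$, which gives the exact identity
\begin{equation*}
E\bigl[f(x_N) - f(x_{N+1})\bigr] \;=\; \frac{b}{n}\bigl(f(x_0) - E[f(x_N)]\bigr).
\end{equation*}
Substituting this identity converts the per-step expected decrease into $\tfrac{b}{n}E[f(\tilde{x}^{k-1}) - f(\tilde{x}^k)]$, after which I multiply through by $n$ and use that $\|x_N - x_0\|^2 = \|\tilde{x}^k - \tilde{x}^{k-1}\|^2$ to produce the stated inequality.

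The main obstacle is Step 2: it is tempting but incorrect to try to sum the per-step inequality from $t=0$ to $N-1$, since the randomness in $v_t$ and the randomness in $N$ do not separate cleanly and the $\|x_t - x_0\|^2$ residuals would accumulate in an unwieldy way. Lemma \ref{Geom_dist_property} bypasses this by exploiting the memoryless property of $\mathrm{Geom}(\gamma)$ to turn a single-step expected decrease evaluated at the random stopping time into a global epoch decrease with a clean $b/n$ factor, which is precisely the scaling appearing in the claim. A minor care point is verifying that the coefficient $1 - \eta L$ on $\|\nabla f(x_t)\|^2$ survives after taking the full expectation and applying Lemma \ref{Geom_dist_property}; since that coefficient is non-random and positive under $\eta L < 1$, it passes through both expectations untouched, yielding the stated bound.
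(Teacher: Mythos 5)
Your proposal is correct and follows essentially the same route as the paper's proof: a one-step descent inequality from $L$-smoothness plus Lemma \ref{variance_direction} (absorbing the $\eta L/2$ into $\eta L$ to get the coefficient $1-\eta L$), followed by evaluating at $t=N_k$ and applying Lemma \ref{Geom_dist_property} with $\gamma = n/(n+b)$ to produce the $\tfrac{b}{n}$ factor, and finally scaling by $n$.
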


\begin{proof}
	\begin{align*}
	E_{I_t} [f( x_{t+1}^k)] &\leq f(x_t^k ) - \eta \| \nabla f( x_t^k)\|^2 + \frac{L \eta^2}{2} E_{I_t} [\| v_t^k \| ^2] \\
	&\leq f( x_t^k) - \eta( 1 - \frac{\eta L}{2}) \| \nabla f( x_t^k) \|^2 + \frac{L^3\eta^2}{2b}\| x_t^k - x_0^k\|^2\\
	&\leq f( x_t^k) - \eta( 1 - \eta L) \| \nabla f( x_t^k) \|^2 + \frac{L^3\eta^2}{2b}\| x_t^k - x_0^k\|^2,
	\end{align*}
	where the first inequality is due to L-smoothness and the second inequality is due to Lemma \ref{variance_direction}.
	
	Let $E_k$ denotes the expectation over all  mini-batchs $I_0, I_1, ...$ of epoch $k$ given $N_k$. Hence, 
	$$\eta ( 1 - \eta L) E_k[\| \nabla f( x_t^k) \|^2] \leq E_k[ f( x_t^k)] - E_k[ f(x_{t+1}^k)] + \frac{L^3\eta^2}{2b}E_k[\| x_t^k - x_0^k\|^2].$$
	Let $t = N_k$. By taking expectation with respect to $N_k$ and using Fubini's theorem, we obtain that
	\begin{align*}
	\eta ( 1 - \eta L)E_{N_k} E_k[\| \nabla f( x_{N_k}^k) \|^2] &\leq E_{N_k}( E_k[ f( x_{N_k}^k)] -  E_k[ f(x_{{N_k}+1}^k)]) + \frac{L^3\eta^2}{2b}E_{N_k}E_k[\| x_{N_k}^k - x_0^k\|^2] \\
	&= \frac{b}{n}( f( x_0^k) - E_k E_{N_k}[f( x_{N_K}^k)]) + \frac{L^3 \eta^2}{2b}E_kE_{N_k}[\| x_{N_k}^k - x_0^k\|^2],
	\end{align*}
	where the equality is due to Lemma \ref{Geom_dist_property}.\\
	Replace $x_{N_k}^k , x_0^k$ with $\tilde{x}_{k+1}, \tilde{x}_k$, we get the desired inequality.
\end{proof}

\begin{lemma}\label{inequality_2}
	Suppose $\eta^2 L^2 < \frac{b^2}{n}$, then 
	\begin{equation} \label{inquality:2}
	( b - \frac{\eta^2 L^2 n}{b}) E [\| \tilde{x}^k - \tilde{x}^{k -1}\|^2] \leq -2 \eta n E [\langle\nabla f(\tilde{x}^k), \tilde{x}^k - \tilde{x}^{k -1 } \rangle] + 2 \eta^2 n  E[\| \nabla  f( \tilde{x}^k)  \|^2].
	\end{equation}
\end{lemma}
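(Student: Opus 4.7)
}
The plan mirrors the strategy used in Lemma \ref{inequality_1}: instead of applying the geometric-distribution identity of Lemma \ref{Geom_dist_property} to the function value, I would apply it to the squared drift $D_t := \|x_t^k - x_0^k\|^2$, which starts at $D_0 = 0$. The target inequality only involves the endpoint $\tilde{x}^k = x_{N_k}^k$ and the starting point $\tilde{x}^{k-1} = x_0^k$, so the geometric trick is the natural device for producing such an endpoint-only estimate from the inner-loop recursion.

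First, I would derive a one-step recursion for $D_t$. From the update $x_{t+1}^k = x_t^k - \eta v_{t+1}^k$ I would expand
\begin{equation*}
D_{t+1} = D_t - 2\eta \langle v_{t+1}^k, x_t^k - x_0^k\rangle + \eta^2 \|v_{t+1}^k\|^2,
\end{equation*}
then take expectation conditional on the history through iteration $t$. Unbiasedness of $v_{t+1}^k$ gives $E[\langle v_{t+1}^k, x_t^k-x_0^k\rangle \mid \mathcal{F}_t] = \langle \nabla f(x_t^k), x_t^k - x_0^k\rangle$, and Lemma \ref{variance_direction} gives $E[\|v_{t+1}^k\|^2 \mid \mathcal{F}_t] \le (L^2/b) D_t + \|\nabla f(x_t^k)\|^2$. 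Taking total expectation yields, for every fixed $t$,
\begin{equation*}
E[D_{t+1}] - E[D_t] \le -2\eta\, E\langle \nabla f(x_t^k), x_t^k - x_0^k\rangle + \tfrac{\eta^2 L^2}{b}\, E[D_t] + \eta^2\, E\|\nabla f(x_t^k)\|^2.
\end{equation*}

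Next, since $N_k$ is drawn independently of the mini-batches, this per-$t$ bound extends to the random index $N_k$ by summing against the law of $N_k$; that is,
\begin{equation*}
E[D_{N_k+1} - D_{N_k}] \le -2\eta\, E\langle \nabla f(x_{N_k}^k), x_{N_k}^k - x_0^k\rangle + \tfrac{\eta^2 L^2}{b}\, E[D_{N_k}] + \eta^2\, E\|\nabla f(x_{N_k}^k)\|^2.
\end{equation*}
On the other hand, applying Lemma \ref{Geom_dist_property} to $D_t$ with $\gamma = n/(n+b)$ (so $1/\gamma - 1 = b/n$) and using $D_0 = 0$ gives $E[D_{N_k} - D_{N_k+1}] = -(b/n)\, E[D_{N_k}]$, i.e., $E[D_{N_k+1}] - E[D_{N_k}] = (b/n)\, E[D_{N_k}]$.

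Finally, I would equate the two expressions for $E[D_{N_k+1}] - E[D_{N_k}]$, rearrange so that the $E[D_{N_k}]$ terms are on the left, and multiply both sides by $n$. After identifying $x_{N_k}^k = \tilde{x}^k$ and $x_0^k = \tilde{x}^{k-1}$, the condition $\eta^2 L^2 < b^2/n$ guarantees that the left-hand coefficient $b - \eta^2 L^2 n/b$ is positive, and the resulting estimate is in fact slightly sharper than \eqref{inquality:2} (with $\eta^2 n$ in place of $2\eta^2 n$ on the last term), which implies the claimed inequality. The only subtle step, and the one I would write out most carefully, is justifying the passage from the deterministic-$t$ recursion to the random-$N_k$ version, since this requires the independence of $N_k$ from the mini-batch randomness used inside the inner loop.
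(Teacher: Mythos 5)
Your proposal is correct and follows essentially the same route as the paper's own proof: expand $\|x_{t+1}^k - x_0^k\|^2$, use Lemma \ref{variance_direction} to bound $E[\|v_t^k\|^2]$, and then combine the per-iteration recursion with the geometric-distribution identity of Lemma \ref{Geom_dist_property} (via Fubini, using independence of $N_k$) applied to $D_t = \|x_t^k - x_0^k\|^2$ with $D_0 = 0$. The only difference is cosmetic: the paper deliberately relaxes $\eta^2\|\nabla f(x_t^k)\|^2$ to $2\eta^2\|\nabla f(x_t^k)\|^2$ "for future convenience" (so the inequality combines cleanly with Lemma \ref{inequality_1} in the proof of Lemma \ref{LargeGrad}), whereas you keep the sharper $\eta^2 n$ coefficient, which indeed implies the stated bound.
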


\begin{proof}
	\begin{align*}
	E_{I_t}[\| x_{t+1}^k - x_0^k \|^2]&E_{I_t}[\| x_{t}^k -\eta v_t^k - x_0^k \|^2]\\
	&= \| x_{t}^k - x_0^k \|^2 - 2\eta \langle E_{I_t} [v_t^k], x_t^k - x_0^k \rangle + \eta^2 E_{I_t} [\| v_t^k \|^2] \\
	&= \| x_{t}^k - x_0^k \|^2 - 2\eta \langle \nabla f( x_t^k), x_t^k - x_0^k \rangle + \eta^2 E_{I_t} [\| v_t^k \|^2] \\
	& \leq ( 1 +\frac{\eta^2 L^2}{b} ) \| x_{t}^k - x_0^k \|^2 - 2\eta \langle \nabla f( x_t^k), x_t^k - x_0^k \rangle + \eta^2 \| \nabla f( x_t^k) \|^2\\
	& \leq ( 1 +\frac{\eta^2 L^2}{b} ) \| x_{t}^k - x_0^k \|^2 - 2\eta \langle \nabla f( x_t^k), x_t^k - x_0^k \rangle + 2\eta^2 \| \nabla f( x_t^k) \|^2.
	\end{align*}
	
	where first inequality dues to Lemma \ref{variance_direction} and second inequality is  for future convenience. 
	
	Using the same notation $E_k$ as in previous lemma, we get:
	$$2\eta E_k [\langle \nabla f( x_t^k), x_t^k - x_0^k \rangle] \leq ( 1 +\frac{\eta^2 L^2}{b} ) E_k [\| x_{t}^k - x_0^k \|^2] - E_k [\| x_{t+1}^k - x_0^k \|^2]  + 2\eta^2 E_k [\| \nabla f( x_t^k) \|^2].  $$
	
	Then Let $t = N_k $. By taking expectation with respect to $N_k$ and using Fubini's Theorem, we get 
	\begin{align*}
	&2\eta E_{N_k }E_k [\langle \nabla f( x_{N_k }^k), x_{N_k }^k - x_0^k \rangle]  \\
	& \leq ( 1 +\frac{\eta^2 L^2}{b} ) E_{N_k }E_k [\| x_{N_k }^k - x_0^k \|^2] - E_{N_k }E_k [\| x_{N_k +1 }^k - x_0^k \|^2]  + 2\eta^2 E_{N_k }  [\| \nabla f( x_{N_k }^k) \|^2] \\
	&=( - \frac{b}{n} + \frac{\eta^2 L^2}{b}) E_{N_k }E_k [\| x_{N_k }^k - x_0^k \|^2] + 2\eta^2 E_{N_k } [ \| \nabla f( x_{N_k }^k) \|^2].
	\end{align*}
	Replace $x_{N_k}^k , x_0^k$ with $\tilde{x}^{k+1}, \tilde{x}^k$, we get the desired result.
\end{proof}

\subsection{Lemmas for epochs with large gradient}
\begin{lemma} \label{LargeGrad}
	%[Rewrite Lemma 4.1]
	With conditions:
	$\eta L= \gamma ( \frac{b}{n})^\frac{2}{3}$ 
	where $b \geq 1$, $n\geq 8b $ and $\gamma \leq \frac{1}{3}$,
	\citep{lei2017non} guarantee the decrease of function value for each epoch: 
	\begin{align*} 
	E[f(\tilde{x}^k) - f(\tilde{x}^{k-1})] \leq -\frac{\gamma}{5L} ( \frac{n}{b})^\frac{1}{3} E[\| \nabla f( \tilde{x}^k) \|^2].
	\end{align*}
\end{lemma}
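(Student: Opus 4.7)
\textbf{Proof plan for Lemma \ref{LargeGrad}.}

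The plan is to combine the two one-epoch bounds already established in Lemmas \ref{inequality_1} and \ref{inequality_2}, after using Young's inequality to eliminate the cross term $\langle \nabla f(\tilde{x}^k),\, \tilde{x}^k - \tilde{x}^{k-1}\rangle$ that appears in inequality (\ref{inquality:2}). This is the standard SCSG descent estimate from \citep{lei2017non}, so the work is chiefly algebraic bookkeeping.

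First I would start from (\ref{inquality:2}) and apply Young's inequality $2|\langle u,v\rangle|\leq \alpha\|u\|^2+\alpha^{-1}\|v\|^2$ with the choice $\alpha=1$ (which is what produces the factor $1+2\eta$ that appears inside the constant $C$ of Theorem \ref{mainTheroem}) to obtain
\[
\bigl(b - \tfrac{\eta^2 L^2 n}{b} - \eta n\bigr)\, E[\|\tilde{x}^k-\tilde{x}^{k-1}\|^2] \;\leq\; \eta n(1+2\eta)\, E[\|\nabla f(\tilde{x}^k)\|^2].
\]
Under the hypotheses $n\geq 8b$ and $\gamma\leq 1/3$ together with $\eta L=\gamma(b/n)^{2/3}$, a direct numerical check shows the coefficient on the left is strictly positive (indeed bounded below by a constant multiple of $b$), so I can divide through and get an explicit bound of the form $E[\|\tilde{x}^k-\tilde{x}^{k-1}\|^2]\leq \kappa\cdot E[\|\nabla f(\tilde{x}^k)\|^2]$ with $\kappa = \eta n(1+2\eta)/(b-\eta^2 L^2 n/b-\eta n)$.

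Next I would substitute this bound for $E[\|\tilde{x}^k-\tilde{x}^{k-1}\|^2]$ into the right-hand side of (\ref{inquality:1}) and collect the $E[\|\nabla f(\tilde{x}^k)\|^2]$ terms on the left, yielding
\[
\Bigl[\eta n(1-\eta L) - \tfrac{L^3\eta^2 n}{2b}\kappa\Bigr] E[\|\nabla f(\tilde{x}^k)\|^2] \;\leq\; b\, E[f(\tilde{x}^{k-1})-f(\tilde{x}^k)].
\]
Dividing by $b$ then gives an expression of the form $E[f(\tilde{x}^k)-f(\tilde{x}^{k-1})]\leq -c(\eta,L,b,n)\,E[\|\nabla f(\tilde{x}^k)\|^2]$. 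Finally, plugging in $\eta L=\gamma(b/n)^{2/3}$ turns $\eta n/b$ into $(\gamma/L)(n/b)^{1/3}$, and it remains to check that the absolute constant inside simplifies to at least $1/5$. This is where the bounds $\gamma\leq 1/3$ and $n/b\geq 8$ are used: the first ensures $(1-\eta L)\geq 2/3$ and keeps $\eta$ small in the $(1+2\eta)$ factor, while the second controls $\eta^2 L^2 n/b = \gamma^2 (b/n)^{1/3}$ and $\eta n/b$ so that the bracket $b-\eta^2L^2 n/b-\eta n$ is at least a definite positive fraction of $b$. After these inequalities one obtains $c\geq \gamma/(5L)\cdot (n/b)^{1/3}$, which is precisely the claim.

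The main obstacle is not conceptual but rather the careful tracking of constants: one must verify that the chosen Young parameter $\alpha=1$ and the hypotheses $n\geq 8b$, $\gamma\leq 1/3$ simultaneously give (i) $b - \eta^2 L^2 n/b - \eta n > 0$ so that $\kappa$ is well-defined and (ii) the residual coefficient in front of $E[\|\nabla f\|^2]$ after substitution beats the target $(\gamma/5L)(n/b)^{1/3}$. Everything else is a direct chain of substitutions from Lemmas \ref{inequality_1} and \ref{inequality_2}.
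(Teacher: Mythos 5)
There is a genuine gap in your plan at the very first substitution.

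After applying Young's inequality with $\alpha=1$ to (\ref{inquality:2}) you obtain the denominator $b - \frac{\eta^2 L^2 n}{b} - \eta n$, and you assert that the hypotheses $n\geq 8b$, $\gamma\leq \tfrac{1}{3}$, $\eta L=\gamma(b/n)^{2/3}$ guarantee this is positive. They do not. The SCSG step-size rule fixes $\eta L$ but leaves $\eta n = \frac{\gamma}{L} n^{1/3} b^{2/3}$ free to grow without bound. For instance, with $L=1$, $\gamma=\tfrac{1}{3}$, $b=1$, $n=64$, all hypotheses hold yet $\eta n = \tfrac{4}{3} > 1 = b$, so $b-\frac{\eta^2 L^2 n}{b}-\eta n < 0$ and your $\kappa$ has the wrong sign. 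The quantity you are implicitly trying to control is the constant $C$ from Lemma \ref{decrease_saddle} (you note the factor $1+2\eta$), but the positivity of that $C$ is established in the paper only under an \emph{additional} smallness condition on $\eta$ (``there exists $\eta_0$ such that for $\eta\leq\eta_0$\dots''), which is not part of the hypotheses of Lemma \ref{LargeGrad}. You have transplanted the $\alpha=1$ Young step from Lemma \ref{decrease_saddle} into a lemma whose assumptions are strictly weaker.

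The paper's proof of Lemma \ref{LargeGrad} avoids this entirely by forming the linear combination $2\times(\ref{inquality:1}) + \frac{b}{n\eta}\times(\ref{inquality:2})$ \emph{before} applying Young's inequality, and then choosing the Young parameter so that the coefficient on $E[\|\tilde{x}^k-\tilde{x}^{k-1}\|^2]$ cancels exactly. The resulting quantity that must be positive is $b^3 - \eta^2L^2bn - \eta^3L^3n^2$, and under the step-size rule this equals $b^3\bigl(1-\gamma^2(b/n)^{1/3}-\gamma^3/b\bigr)\geq b^3(1-\gamma^2/2-\gamma^3)>0$ for $\gamma\leq\tfrac{1}{3}$ — both subtracted terms are controlled by $\gamma$ alone, independent of $L$ and of how large $n/b$ is. This is the essential point your route misses: the order of operations (combine first, then tune the Young parameter to kill the distance term) is what makes the constants come out uniformly positive under the stated hypotheses. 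To repair your argument you would either need to reproduce this combination, or apply Young's with a $t$-dependent parameter $\alpha$ chosen small enough that $\alpha\eta n<b$, and then track the resulting blow-up of $\eta n/\alpha$ — which effectively reconstructs the paper's calculation.
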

\begin{proof}
	Lemma \ref{inequality_1} Inequality (\ref{inquality:1}) $\times 2$ + Lemma \ref{inequality_2} Inequality (\ref{inquality:2})  $\times\frac{b}{n\eta}:$
	\begin{align*}
	&2\eta n ( 1 - \eta L - \frac{b}{n}) E[\|\nabla f(\tilde{x}^k) \|^2] + \frac{b^3 - \eta^2L^2bn - \eta^3L^3n^2}{b\eta n} E[\|\tilde{x}^k -\tilde{x}_{k-1}\|^2 ]\\
	& \leq -2b E[\langle \nabla f( \tilde{x}^k), \tilde{x}^k - \tilde{x}_{k-1}\rangle] + 2 b E[ f(\tilde{x}^{k-1} ) - f(\tilde{x}^k )] \\
	& \leq \frac{b\eta n}{b^3 - \eta^2L^2bn - \eta^3L^3n^2} b^2 E[\|\nabla f(\tilde{x}^k) \|^2] + \frac{b^3 - \eta^2L^2bn - \eta^3L^3n^2}{b\eta n} E[\|\tilde{x}^k -\tilde{x}_{k-1}\|^2] \\&  + 2 b E[ f(\tilde{x}^{k-1} ) - f(\tilde{x}^k )]. \\
	\end{align*}
	Therefore:
	$$ \frac{\eta n}{b}(  2 - \frac{2b}{n} - 2\eta L - \frac{b^3}{b^3 - \eta^2L^2bn - \eta^3L^3n^2})E[\|\nabla f(\tilde{x}^k) \|^2] \leq 2 E[f(\tilde{x}^{k-1} ) - f(\tilde{x}^k )].$$
	Since $\eta L = \gamma ( \frac{b}{n})^{\frac{2}{3}}$ and $n \geq 8b\geq 8$:
	$$b^3 - \eta^2L^2bn - \eta^3L^3n^2 \geq b^3( 1 - \frac{\gamma^2}{2} - \gamma^3).$$
	Then we get:
	$$\gamma( \frac{n}{b})^{\frac{1}{3}}( 2 - \frac{2b }{n} - 2\gamma(\frac{b}{n})^{\frac{2}{3}} - \frac{1}{ 1 - \frac{\gamma^2}{2} - \gamma^3}) E[\|\nabla f(\tilde{x}^k) \|^2] \leq 2L E[ f(\tilde{x}^{k-1} ) - f(\tilde{x}^k )].$$
	Considering $ n\geq 8b \geq 8$ and $\gamma \leq \frac{1}{3}$, we get the final result.	
\end{proof}

\subsection{Lemmas for epochs with small gradient but not around saddle points}
\begin{lemma}
	%[Rewrite Lemma 4.2]
	With conditions:
	$\eta L= \gamma ( \frac{b}{n})^\frac{2}{3}$ 
	where $b \geq 1$, $n\geq 8b $ and $\gamma \leq \frac{1}{3}$, for each iteration of SCSG, the increase of function value caused by disturbance is:
	$$E[f(x_t) - f( x_{t-1})]  \leq \frac{5l^2\gamma^2}{L}(\frac{b}{n})^{\frac{4}{3}}.$$
	
\end{lemma}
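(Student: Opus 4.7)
The plan is to apply the descent lemma, derived from the $L$-Lipschitz gradient assumption, to a single inner iteration $x_t = x_{t-1} - \eta v_{t-1}$ of SCSG, where $v_{t-1} = \nabla f_{I_t}(x_{t-1}) - \nabla f_{I_t}(\tilde{x}) + \tilde{\mu}$ is the variance-reduced search direction. This yields
\begin{equation*}
f(x_t) \leq f(x_{t-1}) - \eta\langle \nabla f(x_{t-1}), v_{t-1}\rangle + \frac{L\eta^2}{2}\|v_{t-1}\|^2.
\end{equation*}
Taking the conditional expectation over the mini-batch $I_t$, and using that the SCSG direction is unbiased (i.e., $E_{I_t}[v_{t-1}] = \nabla f(x_{t-1})$, as shown inside the proof of Lemma \ref{variance_direction}), the linear term collapses to $-\eta\|\nabla f(x_{t-1})\|^2 \leq 0$ and can be discarded.

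The remaining task is to bound $E_{I_t}[\|v_{t-1}\|^2]$. Rather than invoking the $\|x_{t-1}-x_0\|^2$-style bound from Lemma \ref{variance_direction}, which is awkward on a per-iteration basis, I would use the direct pointwise estimate $\|v_{t-1}\| \leq 3l$. This follows from the triangle inequality applied to the three summands in $v_{t-1}$, together with Assumption \ref{assumption4f}(3), which gives $\|\nabla f_{I_t}(x)\| \leq l$ and $\|\tilde{\mu}\| \leq l$ for any $x$. Hence $E_{I_t}[\|v_{t-1}\|^2] \leq 9l^2$.

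Substituting the stepsize relation $\eta L = \gamma(b/n)^{2/3}$, equivalently $\eta^2 = \gamma^2(b/n)^{4/3}/L^2$, into the descent bound gives
\begin{equation*}
E[f(x_t) - f(x_{t-1})] \leq \frac{L\eta^2}{2}\cdot 9l^2 = \frac{9l^2\gamma^2}{2L}\Bigl(\frac{b}{n}\Bigr)^{4/3} \leq \frac{5l^2\gamma^2}{L}\Bigl(\frac{b}{n}\Bigr)^{4/3},
\end{equation*}
which is the claimed bound (the slack between $9/2$ and $5$ absorbs constants cleanly and matches the per-epoch version of the lemma after multiplying by the expected inner-loop length $E[N_k] = n/b$, giving $(b/n)^{1/3}$ as reported earlier).

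There is no substantive obstacle: both the descent lemma and the unbiasedness of $v_{t-1}$ are standard, and the crude bound $\|v_{t-1}\| \leq 3l$ from the uniformly bounded stochastic gradients is what allows us to avoid unrolling the inner loop. The only point requiring mild care is confirming that $\eta L \leq \gamma \leq 1/3 < 1$ so that neither the stepsize condition of Lemma \ref{variance_direction} nor the nonnegativity of $1-\eta L$ is violated when reconciling this bound with the sharper one used in the large-gradient regime.
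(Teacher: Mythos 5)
Your proof is correct and follows essentially the same route as the paper: apply the one-step descent lemma, drop the nonpositive first-order term after taking the conditional expectation, and bound the expected squared norm of the variance-reduced direction by a constant multiple of $l^2$ using Assumption \ref{assumption4f}(3), then substitute $\eta^2 = \gamma^2 (b/n)^{4/3}/L^2$. The only cosmetic difference is that you bound $\|v_{t-1}\|^2 \le 9l^2$ pointwise via the triangle inequality, whereas the paper splits $v_{t-1} = \nabla f(x_{t-1}) + \xi_{t-1}$ and bounds $E[\|\xi_{t-1}\|^2] \le 4l^2$ separately, arriving at the coefficient $5$ (your $9/2$) by the same slack.
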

	
\begin{proof}
		
		\begin{align*}
		E[ \|\xi_t\|^2] &= E[ \|\xi_t^k\|^2]\\
		&= E[ \| \nabla f(x_{t-1}^k) - \nabla f_{I_t}(x_{t-1}^k) + \nabla f_{I_t}( \tilde{x}^k ) - \tilde{\mu} \|^2 ]\\
		&\leq 2E[\| \nabla f(x_{t-1}^k) - \nabla f_{I_t}(x_{t-1}^k)\|^2 + \|\nabla f_{I_t}( \tilde{x}^k ) - \tilde{\mu} \|^2 ]\\
		&\leq 2E[\| \nabla f_{I_t}(x_{t-1}^k)\|^2 + \|\nabla f_{I_t}( \tilde{x}^k ) \|^2 ]\\
		&\leq 4l^2,
		\end{align*}
where last inequality is due to the norm of gradient is $l$-bounded.\\
		\begin{align*} 
		E[f(x_t) - f( x_{t-1})]&\leq -\eta \| \nabla f( x_{t-1})\|^2 + \frac{L}{2}\eta^2 E[\|\nabla f( x_{t-1}) + \xi_{t-1}\|^2]\\
		&\leq \frac{L}{2}\eta^2 E[\|\nabla f( x_{t-1}) + \xi_{t-1}\|^2]\\
		&\leq L\eta^2 E[\|\nabla f( x_{t-1})\|^2] + L\eta^2E[\|\xi_{t-1}\|^2]\\
		& \leq 5Ll^2\eta^2 \\
		&= \frac{5l^2\gamma^2}{L}(\frac{b}{n})^{\frac{4}{3}}.
		\end{align*}	
\end{proof}

\begin{lemma}
	%[Rewrite Lemma 4.2]
	With conditions:
	$\eta L= \gamma ( \frac{b}{n})^\frac{2}{3}$ 
	where $b \geq 1$, $n\geq 8b $ and $\gamma \leq \frac{1}{3}$, for each epoch of SCSG, the increase of function value caused by disturbance is:
	$$E[f(\tilde{x}^k) - f(\tilde{x}^{k-1})]  \leq \frac{5l^2\gamma^2}{L}(\frac{b}{n})^{\frac{1}{3}}.$$
	
\end{lemma}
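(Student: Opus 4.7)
The plan is to lift the per-iteration bound established in the preceding lemma to a per-epoch bound by summing over the inner loop and then taking expectation over the (random) length $N_k$ of that loop. Concretely, by the algorithm definition, $\tilde{x}^k$ corresponds to $x_{N_{k-1}}^{k-1}$ in the $(k{-}1)$st epoch with $x_0^{k-1} = \tilde{x}^{k-1}$. Thus I can write
\begin{equation*}
f(\tilde{x}^{k}) - f(\tilde{x}^{k-1}) \;=\; \sum_{t=1}^{N_{k-1}} \bigl( f(x_{t}^{k-1}) - f(x_{t-1}^{k-1}) \bigr)
\end{equation*}
and seek to control the right-hand side in expectation.

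Next, I would invoke the previous lemma, which supplies the uniform per-step bound $E[f(x_t) - f(x_{t-1})] \le \frac{5l^2\gamma^2}{L}\bigl(\frac{b}{n}\bigr)^{4/3}$. Taking expectation conditional on $N_{k-1}$ and using the tower property (or equivalently, Wald's identity, since the per-step increment bound does not depend on the value of $N_{k-1}$), the sum of $N_{k-1}$ terms satisfies
\begin{equation*}
E\!\left[\sum_{t=1}^{N_{k-1}} \bigl( f(x_{t}^{k-1}) - f(x_{t-1}^{k-1}) \bigr)\right] \;\le\; E[N_{k-1}] \cdot \frac{5l^2\gamma^2}{L}\Bigl(\tfrac{b}{n}\Bigr)^{4/3}.
\end{equation*}

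Finally, I would plug in the geometric mean. Since $N_{k-1} \sim \mathrm{Geom}(n/(n+b))$, Definition 5 gives $E[N_{k-1}] = \frac{n/(n+b)}{1 - n/(n+b)} = \frac{n}{b}$. Multiplying yields
\begin{equation*}
E[f(\tilde{x}^k) - f(\tilde{x}^{k-1})] \;\le\; \frac{n}{b}\cdot\frac{5l^2\gamma^2}{L}\Bigl(\tfrac{b}{n}\Bigr)^{4/3} \;=\; \frac{5l^2\gamma^2}{L}\Bigl(\tfrac{b}{n}\Bigr)^{1/3},
\end{equation*}
which is exactly the claim. The only subtlety, and the step I would state most carefully, is the interchange of sum and expectation under the random stopping time $N_{k-1}$; this is immediate because the per-step bound holds deterministically in expectation and $N_{k-1}$ is independent of the future per-step increments once conditioned upon. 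No other obstacle is anticipated, as the conditions $\eta L = \gamma (b/n)^{2/3}$, $n \ge 8b$, $\gamma \le 1/3$ are inherited directly from the per-iteration lemma and need no further exploitation here.
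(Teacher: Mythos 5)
Your proof is correct and matches the paper's argument exactly: telescope over the inner loop, apply the per-iteration bound $\frac{5l^2\gamma^2}{L}\bigl(\frac{b}{n}\bigr)^{4/3}$, and multiply by $E[N_k] = n/b$ (which follows from $N_k \sim \mathrm{Geom}(n/(n+b))$). The paper writes the expectation-and-sum interchange loosely (sum outside, expectation inside with a random upper limit), whereas you correctly phrase it as a Wald-type step conditioning on $N_{k-1}$; this is a cosmetic improvement, not a different approach.
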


\begin{proof}
	
	\begin{align*} 
	E[f(\tilde{x}^k) - f(\tilde{x}^{k-1})] &= \sum_{t=1}^{N_k}E[f(x_t^{k-1}) - f( x_{t-1}^{k-1})]\\
	&\leq  \frac{5l^2\gamma^2}{L}(\frac{b}{n})^{\frac{4}{3}} E[N_k]\\
	&=   \frac{5l^2\gamma^2}{L}(\frac{b}{n})^{\frac{1}{3}}
	\end{align*}	
\end{proof}

\subsection{Lemmas for epochs around saddle points}
	The following representations are used to show different status of parameters in the following proof:\\
$x_t^k $ : parameter $ x$ at $t-th$ iteration of  $k-th$ epoch; \\
$\tilde{x}^k $: parameter $x$ used to calculate full gradient at epoch $k$; \\
$x_t $ : parameter $x$ at $t-th$ iteration, where $t$ is the global counter;\\
\begin{lemma} \label{decrease_saddle}
	%[Rewrite Lemma 4.3]
	Suppose $b \geq 1$, $n\geq 8b \geq 8$, then there exists a small stepsize $\eta$, such that 
	$$E[\| \tilde{x}^k - \tilde{x}^{k-1} \|^2] \leq C E[ f( \tilde{x}^{k-1}) - f( \tilde{x}^k)],$$
	where $ C = b{ [\frac{( b - \frac{\eta^2 L^2 n}{b} - \eta n )( 1 - \eta L)}{ 1 + 2 \eta} - \frac{L^3\eta^2 n}{2b}] }^{-1} >0. $
\end{lemma}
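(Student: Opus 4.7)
The plan is to combine Lemma \ref{inequality_1} and Lemma \ref{inequality_2} to eliminate the gradient term $E[\|\nabla f(\tilde{x}^k)\|^2]$, obtaining a direct bound of $E[\|\tilde{x}^k - \tilde{x}^{k-1}\|^2]$ in terms of the function-value decrease. The inner product in Lemma \ref{inequality_2} is the obstruction to doing this immediately, so the first step is to bound it via Young's inequality.

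First, I would apply Young's inequality with parameter $\alpha = 1$ to the cross term in Lemma \ref{inequality_2}:
\begin{equation*}
-2\eta n \, E[\langle \nabla f(\tilde{x}^k), \tilde{x}^k - \tilde{x}^{k-1}\rangle] \le \eta n \, E[\|\nabla f(\tilde{x}^k)\|^2] + \eta n \, E[\|\tilde{x}^k - \tilde{x}^{k-1}\|^2].
\end{equation*}
Plugging this into Lemma \ref{inequality_2} and collecting the $\|\tilde{x}^k - \tilde{x}^{k-1}\|^2$ terms gives
\begin{equation*}
\Bigl( b - \tfrac{\eta^2 L^2 n}{b} - \eta n \Bigr) E[\|\tilde{x}^k - \tilde{x}^{k-1}\|^2] \le \eta n (1 + 2\eta) E[\|\nabla f(\tilde{x}^k)\|^2].
\end{equation*}

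Next, I would use Lemma \ref{inequality_1} (rearranged, using $\eta L < 1$) to upper bound $E[\|\nabla f(\tilde{x}^k)\|^2]$ by a combination of $E[f(\tilde{x}^{k-1}) - f(\tilde{x}^k)]$ and $E[\|\tilde{x}^k - \tilde{x}^{k-1}\|^2]$, then substitute. After collecting the iterate-difference terms on the left-hand side, one obtains
\begin{equation*}
\Biggl[ \frac{(b - \tfrac{\eta^2 L^2 n}{b} - \eta n)(1-\eta L)}{1 + 2\eta} - \frac{L^3 \eta^2 n}{2b} \Biggr] E[\|\tilde{x}^k - \tilde{x}^{k-1}\|^2] \le b \, E[f(\tilde{x}^{k-1}) - f(\tilde{x}^k)],
\end{equation*}
which is precisely the stated inequality with constant $C$ once one divides through.

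The main obstacle is showing that the bracketed coefficient on the left is strictly positive, so that $C > 0$ and the algebraic rearrangement is legitimate. This is where the assumption that $\eta$ is small enough comes in. Using $\eta L = \gamma (b/n)^{2/3}$ with $\gamma \le 1/3$ and $n \ge 8b$, the term $b - \tfrac{\eta^2 L^2 n}{b} - \eta n = b(1 - \gamma^2 (b/n)^{1/3} - \gamma (n/b)^{1/3} \cdot (b/n))$ stays bounded away from $0$ for small enough $\gamma$, and $(1-\eta L)/(1+2\eta)$ is of order $1$, while $L^3 \eta^2 n / (2b) = L \gamma^2 (b/n)^{1/3}/2$ is of smaller order. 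So for sufficiently small $\eta$ (equivalently, $\gamma$ small enough under the prescribed parameter regime), the bracketed expression is positive, which yields $C > 0$ and completes the proof.
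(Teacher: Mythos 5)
Your proposal matches the paper's proof essentially line for line: Young's inequality with parameter $1$ on the cross term of Lemma \ref{inequality_2}, collection of the $\|\tilde{x}^k - \tilde{x}^{k-1}\|^2$ terms, substitution of the gradient bound from Lemma \ref{inequality_1}, and a final rearrangement yielding exactly the stated constant $C$. For positivity the paper argues slightly more cleanly — it observes $\lim_{\eta\to 0^+}h(\eta)=b>0$ with $h$ the bracketed expression and invokes continuity, rather than expanding under the $\eta L=\gamma(b/n)^{2/3}$ parameterization as you do (your expansion has a small algebraic slip in the factoring of $b-\eta^2L^2n/b-\eta n$, but the conclusion that the bracket is positive for $\eta$ small enough is the same and is what the lemma asserts).
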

\begin{proof}
	With Inequality (\ref{inquality:2}) in Lemma \ref{inequality_2} and $-2\langle a, b \rangle \leq \|a\|^2 + \|b\|^2$, we have:
	\begin{align*}
	    	(b - \frac{\eta^2 L^2 n}{b}) E[\|\tilde{x}^k - \tilde{x}^{k-1}\|^2] &\leq -2\eta n E[\langle\nabla f(\tilde{x}^k ), \tilde{x}^k - \tilde{x}^{k-1}\rangle] + 2\eta^2 n E[\| \nabla f( \tilde{x}^k)\|^2]\\
	& \leq \eta n E[\|\tilde{x}^k - \tilde{x}^{k -1}\|^2]+(\eta n +2\eta^2 n) E[\| \nabla f( \tilde{x}^k)\|^2]
	\end{align*}
	Reorganize:
	$$(b-\frac{\eta^2 L^2 n}{b}-\eta n)E[\|\tilde{x}^k - \tilde{x}^{k -1}\|^2] \leq (\eta n +2\eta^2 n) E[\| \nabla f( \tilde{x}^k)\|^2].$$
	With Inequality (\ref{inquality:1}) in Lemma \ref{inequality_1}, we plug in the bound of $E[\| \nabla f( \tilde{x}^k)\|^2]$ and reorganize:
	$$\frac{(b-\frac{\eta^2 L^2 n}{b}-\eta n)(\eta n(1-\eta L))}{\eta n +2\eta^2 n}E[\|\tilde{x}^k - \tilde{x}^{k -1}\|^2] \leq bE[ f( \tilde{x}^{k-1}) - f( \tilde{x}^k)] +\frac{L^3\eta^2 n}{2b}E[\|\tilde{x}^k - \tilde{x}^{k -1}\|^2]. $$
	Hence $$E[\|\tilde{x}^k - \tilde{x}^{k -1}\|^2] \leq CE[f(\tilde{x}^{k-1})-f(\tilde{x}^k)],$$ where $C=b{[\frac{(b-\frac{\eta^2 L^2 n}{b}-\eta n)(1-\eta L)}{1 +2\eta}-\frac{L^3\eta^2 n}{2b}]}^{-1}.$

	Then exist constant $\eta_0$, for arbitrary $\eta \leq \eta_0$, the following inequality is true: 
	$$ \frac{( b - \frac{\eta^2 L^2 n}{b} - \eta n )(  1 - \eta L)}{ 1+ 2 \eta} - \frac{L^3\eta^2 n}{2b} > 0.$$
	Therefore, there exist positive constant $C$, such that 
	$$E[\| \tilde{x}^k - \tilde{x}^{k-1} \|^2] \leq C E[ f( \tilde{x}^{k-1}) - f( \tilde{x}^k)].$$
	Let's argue the existence of $\eta_0$:
	\begin{align*}
	h( \eta) & = \frac{( b - \frac{\eta^2 L^2 n}{b} - \eta n )(  1 - \eta L)}{ 1+ 2 \eta} - \frac{L^3\eta^2 n}{2b} \\
	&= \frac{b + \Theta ( \eta) + O( \eta)}{ 1 + 2 \eta}.\\
	\end{align*}
	Hence, 
	\begin{equation*}
	lim_{\eta \rightarrow +0} h( \eta)= b. 
	\end{equation*} 
	Given that $h( \eta)$ is continous in $(0, 1]$, there must exists $\eta_0$, such that for arbitrary $\eta \leq \eta_0$ , $$f( \eta) > 0.$$
\end{proof}
We want to see significant decrease in function values $f_{thres}$ in every $\mathcal{K}_{thres}$ epochs, not only when in the scenario where gradients is large, but also when the smallest eigenvalue of hessian matrix is strictly small. The proof is finished by contradiction: first assume the decrease of function value is lower-bounded, then derive an upper bound and a lower bound, then contradiction is introduced by let lower bound larger than upper bound. Hence, the decrease of function value is larger than $f_{thres}$.\\

Without loss generality, $\tilde{x}^0$ satisfies first-order stationary condition, which is the starting point of escaping saddle points stage and is followed by one step SGD and $\mathcal{K}_{thres}$ epoch of SCSG.

	\begin{lemma}\label{distance_Upper_bound}
		Suppose that expectation of the decrease function value is lower-bounded:
		\begin{align}\label{ineq:value_decrease}
		    	E[ f( \tilde{x}^{\mathcal{K}})] - f( \tilde{x}^0) \geq -f_{thres}. 
		\end{align}
		Then, the expectation of the distance from the current  $\tilde{x}^{\mathcal{K}}$ to $\tilde{x}^0$ is bounded as
		\begin{equation} \label{upperbound4distiance}
		E[\|  \tilde{x}^{\mathcal{K}} - \tilde{x}^0\|^2] \leq2\mathcal{K}C f_{thres} + \mathcal{K}C L(lr)^2 + 2( lr)^2.
		\end{equation}
	\end{lemma}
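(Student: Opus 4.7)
The plan is to split the displacement $\tilde{x}^{\mathcal{K}}-\tilde{x}^0$ into two legs (the single SGD perturbation step, and the $\mathcal{K}$ SCSG epochs that follow it), bound each leg separately, then combine via $\|a+b\|^2\le 2\|a\|^2+2\|b\|^2$. Write $\tilde{y}=\tilde{x}^0-r\nabla f_i(\tilde{x}^0)$ for the post-SGD iterate, and let $w_0=\tilde{y},w_1,\ldots,w_{\mathcal{K}}=\tilde{x}^{\mathcal{K}}$ denote the $\mathcal{K}$ snapshot iterates produced by SCSG. Then
\begin{equation*}
\|\tilde{x}^{\mathcal{K}}-\tilde{x}^0\|^2\;\le\;2\|\tilde{x}^{\mathcal{K}}-\tilde{y}\|^2+2\|\tilde{y}-\tilde{x}^0\|^2,
\end{equation*}
and Assumption \ref{assumption4f}(3) immediately controls the second term by $2(lr)^2$, explaining the last summand in the target bound.

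Next I would handle the SCSG leg by Cauchy--Schwarz on the $\mathcal{K}$ consecutive increments:
\begin{equation*}
\|\tilde{x}^{\mathcal{K}}-\tilde{y}\|^2=\Bigl\|\sum_{k=0}^{\mathcal{K}-1}(w_{k+1}-w_k)\Bigr\|^2\;\le\;\mathcal{K}\sum_{k=0}^{\mathcal{K}-1}\|w_{k+1}-w_k\|^2.
\end{equation*}
Taking expectations and applying Lemma \ref{decrease_saddle} to each pair $(w_k,w_{k+1})$ telescopes the right-hand side into $\mathcal{K}\,C\,E[f(\tilde{y})-f(\tilde{x}^{\mathcal{K}})]$. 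The key is that Lemma \ref{decrease_saddle} gets applied along the SCSG portion only, not starting from $\tilde{x}^0$, so an SGD ``bump'' in function value must be accounted for explicitly.

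To bound $E[f(\tilde{y})-f(\tilde{x}^{\mathcal{K}})]$, I split it as $[f(\tilde{y})-f(\tilde{x}^0)]+[f(\tilde{x}^0)-f(\tilde{x}^{\mathcal{K}})]$. The second bracket is at most $f_{thres}$ by the hypothesis (\ref{ineq:value_decrease}). For the first bracket, $L$-smoothness of $f$ at $\tilde{x}^0$ together with $\tilde{y}-\tilde{x}^0=-r\nabla f_i(\tilde{x}^0)$ gives
\begin{equation*}
f(\tilde{y})-f(\tilde{x}^0)\;\le\;-r\langle\nabla f(\tilde{x}^0),\nabla f_i(\tilde{x}^0)\rangle+\tfrac{Lr^2}{2}\|\nabla f_i(\tilde{x}^0)\|^2.
\end{equation*}
Taking expectation over $i$ turns the cross term into $-r\|\nabla f(\tilde{x}^0)\|^2\le 0$ (which I simply discard), while Assumption \ref{assumption4f}(3) bounds the quadratic term by $\tfrac{L(lr)^2}{2}$. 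Substituting back yields $E[\|\tilde{x}^{\mathcal{K}}-\tilde{y}\|^2]\le\mathcal{K}C\bigl(f_{thres}+\tfrac{L(lr)^2}{2}\bigr)$, and multiplying by $2$ and adding $2(lr)^2$ reproduces the claimed bound exactly.

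The step that requires the most care is the third one: because the SGD step can transiently \emph{increase} the objective, one cannot apply Lemma \ref{decrease_saddle} over the full window $[\tilde{x}^0,\tilde{x}^{\mathcal{K}}]$ and pair it with the hypothesis directly. Introducing the intermediate $\tilde{y}$ and then re-inserting $\tilde{x}^0$ to use hypothesis (\ref{ineq:value_decrease}) is what forces the SGD bump $\tfrac{L(lr)^2}{2}$ to surface, and its subsequent amplification by the factor $\mathcal{K}C$ is precisely the $\mathcal{K}CL(lr)^2$ middle term in the lemma. Everything else is routine smoothness, Cauchy--Schwarz, and telescoping.
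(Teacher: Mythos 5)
Your proposal is correct and follows essentially the same route as the paper's proof: split the displacement at the post-SGD iterate, bound the single SGD step by $L$-smoothness and the bounded-gradient assumption, apply Cauchy--Schwarz plus Lemma~\ref{decrease_saddle} to telescope the SCSG leg, re-insert $\tilde{x}^0$ to invoke hypothesis (\ref{ineq:value_decrease}) together with the SGD bump bound, and combine with $\|a+b\|^2\le 2\|a\|^2+2\|b\|^2$. The only cosmetic difference is the indexing of the SCSG epochs ($\mathcal{K}$ versus $\mathcal{K}-1$ increments), which the paper also absorbs by rounding up to $\mathcal{K}$ before telescoping.
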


\begin{proof}
	
	$ E[f(\tilde{x}^\mathcal{K}) - f( \tilde{x}^0)]$ can be split into two parts: the first step for SGD : $\tilde{x}^{(1)} = \tilde{x}^0 - r \nabla f(  \tilde{x}^0) $and the remaining $\mathcal{K}-1$ epochs SCSG,$\tilde{x}^{(1)}, ..., \tilde{x}^{(\mathcal{K})}$ .\\
	
	For inserting SGD step:
	\begin{align} \label{ineq:sgd}
	E[f( \tilde{x}^1) - f( \tilde{x}^0)]&\leq -r \| \nabla f( \tilde{x}^0) \|^2 + \frac{L(lr)^2}{2} \leq \frac{L(lr)^2}{2}.
	\end{align}
	
   By Lemma \ref{decrease_saddle}, we can bound the expected distance in the parameter space for remaining $\mathcal{K}-1$ epochs of SCSG:
	\begin{align*} 
	E[\| \tilde{x}^{\mathcal{K}} - \tilde{x}^1 \|^2 ] & \leq (\mathcal{K}-1)\sum_{k=1}^{ \mathcal{K}-1}E[\| \tilde{x}^{k+1} - \tilde{x}^{k} \|^2 ]\\
	& \leq \mathcal{K} \sum_{k=1}^{ \mathcal{K}-1}CE[( f(  \tilde{x}^{k} ) - f( \tilde{x}^{k+1}) )]\\
	& \leq \mathcal{K} CE[( f(  \tilde{x}^{1} ) - f( \tilde{x}^{\mathcal{K}}) )]\\
	& \leq \mathcal{K} CE[( f(  \tilde{x}^{0} ) - f( \tilde{x}^{\mathcal{K}})  + f( \tilde{x}^{1}) - f(\tilde{x}^{0})) ]\\
	&\leq \mathcal{K}C f_{thres} + \frac{\mathcal{K}C L(lr)^2}{2}
	\end{align*}
	The last inequality is due to assumption \ref{ineq:value_decrease} and inequality \ref{ineq:sgd}.
	
	Replace the above inequality into the following bound:
	\begin{align*} 
	E[\| \tilde{x}^{\mathcal{K}} - \tilde{x}^{0} \|^2] &= 2(  E[\| \tilde{x}^{\mathcal{K}} - \tilde{x}^{1} \|^2 ] + E[\| \tilde{x}^{1} -\tilde{x}^{0} \|^2 ]\\
	& \leq 2(   \mathcal{K}C f_{thres} +\frac{\mathcal{K}C L(lr)^2}{2}+  ( lr)^2)\\
	& \leq  2\mathcal{K}C f_{thres} + \mathcal{K}C L(lr)^2 + 2( lr)^2.\\
	\end{align*}
	
	Hence, we obtain upper bound for $E[\| \tilde{x}^{\mathcal{K}} - \tilde{x}^0 \|^2]$.
\end{proof}

\begin{lemma}\label{distance_Upper_bound_t+1}
	[Distance Upper Bound]
	Suppose that expectation of the decrease function value is lower-bounded:
	$$E[ f( \tilde{x}^{\mathcal{K}})] - f( \tilde{x}^0) \geq -f_{thres}. $$
	Then, the expectation of the distance from the current iterate $x_{t+1}$ to $\tilde{x}^0$ is bounded as
	\begin{equation} \label{upperbound4distiance_t+1}
	E[\|  x_{t+1} - \tilde{x}^0\|^2] \leq 4\mathcal{K}C f_{thres} + 2\mathcal{K}C L(lr)^2 +4( lr)^2+ 2( l\eta)^2 
	\end{equation}
	where t is a global iteration counter and $t=\sum_{k=1}^{\mathcal{K}-1} N_k$.
\end{lemma}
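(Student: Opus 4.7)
The plan is to reduce this to Lemma~\ref{distance_Upper_bound} via a single triangle-inequality split, then bound the extra piece using only a one-step SCSG update.

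First I would observe that, with $t = \sum_{k=1}^{\mathcal{K}-1} N_k$, the iterate $x_t$ coincides with the snapshot $\tilde{x}^{\mathcal{K}}$ produced at the end of epoch $\mathcal{K}-1$ (by Line 16 of Algorithm~\ref{alg:SVRG_SGD}, which sets $\tilde{x}^{\mathcal{K}} = x_{N_{\mathcal{K}-1}}^{\mathcal{K}-1}$), and that $x_{t+1}$ is the very first inner step of epoch $\mathcal{K}$. Starting from the standard inequality
\begin{equation*}
\|x_{t+1} - \tilde{x}^0\|^2 \leq 2\|x_{t+1} - \tilde{x}^{\mathcal{K}}\|^2 + 2\|\tilde{x}^{\mathcal{K}} - \tilde{x}^0\|^2,
\end{equation*}
I would treat the two terms separately.

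For the second term I would simply invoke Lemma~\ref{distance_Upper_bound}, which under the same assumed lower bound $E[f(\tilde{x}^{\mathcal{K}})] - f(\tilde{x}^0) \geq -f_{thres}$ already yields
\begin{equation*}
E[\|\tilde{x}^{\mathcal{K}} - \tilde{x}^0\|^2] \leq 2\mathcal{K}C f_{thres} + \mathcal{K}C L(lr)^2 + 2(lr)^2.
\end{equation*}
Multiplying by $2$ gives the first three terms in the desired bound.

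For the first term I would exploit the fact that the SCSG variance-reduction correction degenerates at the snapshot. Since $x_0^{\mathcal{K}} = \tilde{x}^{\mathcal{K}}$, the update in Line 14 at $t+1$ reads
\begin{equation*}
x_{t+1} - \tilde{x}^{\mathcal{K}} = -\eta\bigl(\nabla f_{I}(\tilde{x}^{\mathcal{K}}) - \nabla f_{I}(\tilde{x}^{\mathcal{K}}) + \tilde{\mu}\bigr) = -\eta\,\nabla f(\tilde{x}^{\mathcal{K}}),
\end{equation*}
so by Assumption~\ref{assumption4f}(3) we get $\|x_{t+1} - \tilde{x}^{\mathcal{K}}\|^2 \leq (l\eta)^2$, contributing the final $2(l\eta)^2$ term after doubling. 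Summing the two pieces yields exactly Inequality~(\ref{upperbound4distiance_t+1}).

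The only subtle point is the identification of $x_{t+1}$ with a snapshot-plus-one-step configuration, which is forced by the choice $t = \sum_{k=1}^{\mathcal{K}-1} N_k$; once that bookkeeping is fixed, the argument is a routine combination of a triangle inequality, the previous lemma, and the bounded-gradient assumption, so no new estimates on geometric stopping times or variance reduction are needed. I do not expect any genuine obstacle beyond carefully stating which snapshot $x_{t+1}$ sits on top of.
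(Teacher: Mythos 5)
Your proof is correct and takes essentially the same route as the paper: both split via $\|x_{t+1}-\tilde{x}^0\|^2 \le 2\|x_{t+1}-x_t\|^2 + 2\|x_t-\tilde{x}^0\|^2$, invoke Lemma~\ref{distance_Upper_bound} for the second term, and bound the first by $(l\eta)^2$ after noting that the step from $x_t=\tilde{x}^{\mathcal{K}}$ to $x_{t+1}$ is a full-gradient step. Your added observation that the SCSG correction cancels at the snapshot (forcing $x_{t+1}-\tilde{x}^{\mathcal{K}}=-\eta\tilde{\mu}$) is just an explicit justification of what the paper states tersely as ``one step of full gradient.''
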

\begin{proof}
	From $x_t$ ( or $\tilde{x}^{(\mathcal{K})}$)to $x_{t+1}$ is one step of full gradient.
	\begin{align*}
	E[\|  x_{t+1} - \tilde{x}^0\|^2] &\leq 2E[\|  x_{t+1} -x_{t}\|^2]+ 2E[\|x_{t}-\tilde{x}^0\|^2]\\
	&= 2E[\eta^2\|\nabla f( x_t) \|^2]+ 2E[\|  \tilde{x}^{\mathcal{K}} - \tilde{x}^0\|^2]\\ 
	&\leq2( l\eta)^2 + 4\mathcal{K}C f_{thres} + 2\mathcal{K}C L(lr)^2 +4( lr)^2
	\end{align*}
	Hence, we obtain upper bound for $E[\| x_{t+1} - \tilde{x}^0 \|^2]$.
\end{proof}

	In the following part, we also want to obtain upper bound for $E[\| x_{t+1} - \tilde{x}^0 \|^2]$. The following part analysis is iteration based, where $t = \sum_{k=1}^{\mathcal{K}-1} N_k$. We use $x_0$ to denote $\tilde{x}^{0}$ without ambiguity.
\begin{lemma} \citep{nesterov2013introductory}
	[Taylor expansion bound for the gradient By Nesterov] For every twice differentiable function $f: \mathbb{R}^d \longrightarrow \mathbb{R}$ with $\rho-$Lipshitz Hessians, the following bound holds true:
	$$\|\nabla f( x ) - \nabla g( x )\| \leq \frac{\rho}{2}\|x - x_0\|^2,$$
	where 
	$$g( x ) := f( x_0) + ( x- x_0)^T\nabla f(x_0) + \frac{1}{2} ( x - x_0 )^T H ( x -x_0 ),$$
	where  $H = \nabla^2 f( x_0)$and $x$ is close enough to $x_0$.
\end{lemma}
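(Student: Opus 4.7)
The plan is to reduce this to the standard Lipschitz-Hessian integral bound. First I would write down $\nabla g$ explicitly: since $g$ is the quadratic Taylor polynomial of $f$ at $x_0$, a direct differentiation gives
\[
\nabla g(x) \;=\; \nabla f(x_0) + H(x-x_0),
\]
with $H = \nabla^2 f(x_0)$. So the quantity to control is $\nabla f(x) - \nabla f(x_0) - H(x-x_0)$.

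Next, I would use the fundamental theorem of calculus along the segment from $x_0$ to $x$. Setting $x_t := x_0 + t(x-x_0)$ for $t \in [0,1]$, we have
\[
\nabla f(x) - \nabla f(x_0) \;=\; \int_0^1 \nabla^2 f(x_t)\,(x-x_0)\, dt,
\]
and since $H(x-x_0) = \int_0^1 \nabla^2 f(x_0)(x-x_0)\,dt$, subtracting gives
\[
\nabla f(x) - \nabla g(x) \;=\; \int_0^1 \bigl[\nabla^2 f(x_t) - \nabla^2 f(x_0)\bigr](x-x_0)\, dt.
\]

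Finally, I would take norms, pull the norm inside the integral (triangle inequality for Bochner-type integrals), and invoke $\rho$-Lipschitz continuity of the Hessian: $\|\nabla^2 f(x_t) - \nabla^2 f(x_0)\| \le \rho\,\|x_t - x_0\| = \rho t\,\|x-x_0\|$. Combined with the submultiplicativity $\|A v\| \le \|A\|\,\|v\|$, the integrand is bounded by $\rho t\,\|x-x_0\|^2$, and
\[
\|\nabla f(x)-\nabla g(x)\| \;\le\; \int_0^1 \rho t\,\|x-x_0\|^2\, dt \;=\; \tfrac{\rho}{2}\|x-x_0\|^2.
\]

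There is no real obstacle here; this is a textbook calculation (essentially the second-order remainder form of Taylor's theorem for the gradient). The only minor care is in justifying the interchange of norm and integral, which follows from the continuity of $\nabla^2 f$ on the compact segment $\{x_t : t \in [0,1]\}$, and in noting that the inequality is in fact global (the phrase \emph{$x$ is close enough to $x_0$} in the statement is not needed for the bound itself under a global $\rho$-Lipschitz Hessian assumption, which Assumption~\ref{assumption4f} supplies).
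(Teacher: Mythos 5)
Your proof is correct; it is the standard integral--remainder argument (write $\nabla f(x)-\nabla g(x)=\int_0^1\bigl[\nabla^2 f(x_0+t(x-x_0))-\nabla^2 f(x_0)\bigr](x-x_0)\,dt$, bound the integrand by $\rho t\|x-x_0\|^2$, and integrate), which is exactly the proof in the cited source --- the paper itself states this lemma without reproving it. Your side remark is also right: under Assumption~\ref{assumption4f}(2) the Hessian is globally $\rho$-Lipschitz, so the bound is global and the ``$x$ close enough to $x_0$'' qualifier in the statement is not actually needed.
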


Let's work on lower bound of $E[\|x_{t+1} - x_0 \|^2]$.
\begin{align*} 
x_{t+1} - x_0 &=  x_t - x_0 - \eta \nabla f( x_t) + \eta \xi_t \\ 
&=  x_t - x_0 - \eta \nabla g_t +  \eta (\nabla g_t - \nabla f( x_t) +  \xi_t)\\
&\downarrow replace \nabla g_t = \nabla f(x_0) +  H ( x -x_0 )\\
&= ( I - \eta H )( x_t - x_0 )+  \eta( \nabla g_t - \nabla f( x_t)- \nabla f( x_0) +  \xi_t )\\
&= ( I - \eta H )^t( x_1 - x_0 )+  \eta( \sum_{k=1}^{t}( I - \eta H)^{t-k}(\nabla g_k - \nabla f_k) \\&- \sum_{k=1}^{t}( I - \eta H)^{t-k} \nabla f( x_0) +  \sum_{k=1}^{t}( I - \eta H)^{t-k} \xi_k )\\	 
&= u_t + \eta( \delta_t + d_t +\zeta_t ),
\end{align*}

where\\
$u_t := ( I - \eta H )^t( x_1 - x_0 )$; \\
$\delta_t := \sum_{k=1}^{t}( I - \eta H)^{t-k}(\nabla g(x_k) - \nabla f(x_k))$;\\
$d_t := - \sum_{k=1}^{t}( I - \eta H)^{t-k} \nabla f( x_0)$;\\
$\zeta_t := \sum_{k=1}^{t}( I - \eta H)^{t-k} \xi_k $.

Then,
\begin{align*}
	E[ \| x_{t+1} - x_0 \|^2] &= E[\| u_t + \eta( \delta_t + d_t +\zeta_t )\|^2]\\
	& \geq E[\| u_t \|^2] +2\eta E[ u_t^T  \delta_t ]
	+ 2\eta E[ u_t^Td_t ]+  2\eta E[ u_t^T \zeta_t ]\\
	&\geq E[\| u_t \|^2] - 2\eta E[\| u_t \|]E[\| \delta_t \|]
	+ 2\eta E[ u_t]^Td_t + 2\eta E[ u_t ]^TE[ \zeta_t ].
\end{align*}

\begin{lemma}\label{removedependency}
	(Removing initial gradient dependency).
	\begin{equation}
	E[u_t]^T d_t\geq 0.
	\end{equation}
\end{lemma}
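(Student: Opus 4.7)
The strategy is to take the expectation over the initial SGD step, which makes $E[u_t]$ proportional to $(I-\eta H)^t \nabla f(x_0)$; then $E[u_t]^T d_t$ collapses into a sum of quadratic forms in $\nabla f(x_0)$ whose non-negativity follows from the SCSG stepsize bound $\eta L \leq 1$.

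First I would compute $E[u_t]$. The only random object inside $u_t = (I-\eta H)^t(x_1 - x_0)$ is $x_1 = x_0 - r\nabla f_i(x_0)$ with $i$ drawn uniformly from $[n]$; since $\nabla f = \frac{1}{n}\sum_{i=1}^n \nabla f_i$, this gives $E[x_1 - x_0] = -r\nabla f(x_0)$. Because the operator $(I-\eta H)^t$ is deterministic given $x_0$, it passes outside the expectation, yielding $E[u_t] = -r(I-\eta H)^t \nabla f(x_0)$.

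Plugging this into $E[u_t]^T d_t$ with $d_t = -\sum_{k=1}^t (I-\eta H)^{t-k}\nabla f(x_0)$, the two minus signs cancel, and using the symmetry of $I-\eta H$ the expression collapses to
\[
E[u_t]^T d_t \;=\; r\sum_{k=1}^t \nabla f(x_0)^T (I-\eta H)^{2t-k}\nabla f(x_0).
\]
Since $r > 0$, it suffices to argue that each quadratic form on the right-hand side is non-negative.

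The main obstacle is that the exponents $2t-k$ run through both even and odd values in $\{t,t+1,\dots,2t-1\}$, so one cannot simply write $(I-\eta H)^{2t-k}$ as a square. I would resolve this by diagonalizing the symmetric Hessian $H=U\Lambda U^T$ so that $(I-\eta H)^m = U(I-\eta\Lambda)^m U^T$, and then checking that every eigenvalue $1-\eta\lambda_i$ is non-negative. For $\lambda_i < 0$ this is automatic, while for $\lambda_i > 0$ it follows from $\lambda_i \leq L$ (Assumption \ref{assumption4f}(1)) combined with the SCSG stepsize constraint $\eta L = \gamma(b/n)^{2/3} \leq 1/3$ recorded in Table \ref{table:parameter_setup}. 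Hence every integer power of $I-\eta H$ is PSD, each summand is non-negative, and $E[u_t]^T d_t \geq 0$ as claimed.
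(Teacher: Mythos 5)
Your proof is correct and follows essentially the same route as the paper: compute $E[u_t] = -r(I-\eta H)^t\nabla f(x_0)$, substitute into $E[u_t]^T d_t$ so the signs cancel, and observe that each resulting quadratic form $\nabla f(x_0)^T(I-\eta H)^{2t-k}\nabla f(x_0)$ is non-negative because $I-\eta H\succeq 0$. The only difference is that you spell out \emph{why} $I-\eta H\succeq 0$ (the eigenvalues of $H$ lie in $[-L,L]$ by Assumption~\ref{assumption4f}(1), and $\eta L\leq 1/3$ from the stepsize choice), whereas the paper asserts it without justification; your observation that the odd powers $(I-\eta H)^{2t-k}$ genuinely require this PSD property is the right thing to flag.
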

\begin{proof}
	$$E[u_t] = ( I - \eta H)^tE[x_1 - x_0] = -r( I - \eta H)^t \nabla f( x_0).$$
	Since $ I - \eta H \succeq 0$, 
	\begin{align*}
		E[u_t]^T d_t &= r(( I - \eta H)^t \nabla f( x_0))^T \sum_{k=1}^{t}( I - \eta H)^{t-k} \nabla f( x_0)\\
		&= r \sum_{k=1}^{t}\nabla f( x_0)^T( I - \eta H)^{2t-k} \nabla f( x_0) \\
		& \geq 0.
	\end{align*}
\end{proof}

\begin{lemma}\label{power}
	(Exponential Growing Power Iteration ). After the SCSG runs $t$ steps, it yields an exponentially growing lower bound on the expected squared norm of $u_t$ as follows:
	\begin{equation}
	E[\|u_t\|^2]\geq \tau r^2 \kappa^{2t}.
	\end{equation}
\end{lemma}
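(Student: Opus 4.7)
The vector $u_t = (I - \eta H)^t (x_1 - x_0)$ is deterministic once the SGD perturbation $x_1 - x_0 = -r \nabla f_i(x_0)$ is fixed, so I would lower bound $E[\|u_t\|^2]$ by projecting onto the most-negative-curvature direction. Let $v_{x_0}$ be the unit eigenvector of $H = \nabla^2 f(x_0)$ corresponding to $\lambda_{\min}(H)$. The plan is to use the trivial bound $\|u_t\|^2 \geq \langle v_{x_0}, u_t \rangle^2$ and then exploit that $v_{x_0}$ is an eigenvector of $(I - \eta H)^t$ with eigenvalue $(1 - \eta \lambda_{\min}(H))^t$.

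\textbf{Main steps.} First, since $v_{x_0}$ is an eigenvector of $H$, I would note that $(I - \eta H)^t v_{x_0} = (1 - \eta \lambda_{\min}(H))^t v_{x_0}$. Combined with $x_1 - x_0 = -r \nabla f_i(x_0)$, this yields
\begin{equation*}
\langle v_{x_0}, u_t \rangle = -r \bigl(1 - \eta \lambda_{\min}(H)\bigr)^t \, \langle v_{x_0}, \nabla f_i(x_0) \rangle.
\end{equation*}
Squaring and taking expectation over the random index $i$ used in the SGD kick gives
\begin{equation*}
E\bigl[\langle v_{x_0}, u_t \rangle^2\bigr] = r^2 \bigl(1 - \eta \lambda_{\min}(H)\bigr)^{2t} \, E\bigl[\langle v_{x_0}, \nabla f_i(x_0) \rangle^2\bigr].
\end{equation*}
Now I invoke the CNC assumption (Assumption \ref{CNC_assumption}, Definition \ref{def:CNC-Condition}) at the saddle-region point $x_0 = \tilde{x}^0$ to conclude that the second factor is at least $\tau$, and the hypothesis $\lambda_{\min}(H) \leq -\epsilon_h$ implies $1 - \eta \lambda_{\min}(H) \geq 1 + \eta \epsilon_h$. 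Setting $\kappa := 1 + \eta \epsilon_h > 1$ and chaining the inequalities delivers
\begin{equation*}
E[\|u_t\|^2] \geq E\bigl[\langle v_{x_0}, u_t \rangle^2\bigr] \geq \tau r^2 \kappa^{2t},
\end{equation*}
which is exactly the claimed bound.

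\textbf{Where the work is.} None of the individual steps is deep; the content of the lemma is a clean algebraic packaging of the CNC assumption. The only subtlety I would be careful about is that the expectation here is over the index $i$ drawn in the SGD perturbation line producing $x_1$, and that $v_{x_0}$ and $H$ are deterministic (evaluated at the fixed snapshot $\tilde{x}^0$), so pulling the deterministic factor $(1 - \eta \lambda_{\min}(H))^{2t}$ outside the expectation is legitimate. One has to also confirm that the identification $\kappa = 1 + \eta \epsilon_h$ matches whatever $\kappa$ is used in the subsequent concentration/contradiction argument in Lemma \ref{eigenvalue_m}; if a different placeholder is used there, the present lemma should be stated with the larger factor $1 - \eta \lambda_{\min}(H)$ and then specialized.
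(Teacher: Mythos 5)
Your proof is correct and essentially identical to the paper's: project onto the unit eigenvector $v$ for $\lambda_{\min}(H)$ so $\|u_t\|^2 \geq (v^T u_t)^2$, pull $(1-\eta\lambda_{\min}(H))^{2t}$ out of the expectation since $v$ is an eigenvector of $(I-\eta H)^t$, and apply the CNC condition to $E[(v^T\nabla f_i(x_0))^2]$. The one discrepancy you flag is real but harmless: the paper states the lemma with $\kappa = 1 + \eta\lambda$ where $\lambda := -\lambda_{\min}(H)$, and only later (inside the proof of Lemma~\ref{eigenvalue}, via the remark $\lambda \geq \epsilon_h$) specializes to $1+\eta\epsilon_h$; your version bakes in that specialization up front, which is a strictly weaker but still valid lower bound and is in fact what the contradiction argument ultimately uses.
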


\begin{proof}
	Let $v$ represents the eigenvector corresponding to the most negative eignvalue $\lambda_{min}( H)$,\\
	$$	E[\|u_t\|^2] = 	E[\|v\|^2\|u_t\|^2] \geq E[(v^Tu_t)^2].$$
	Suppose $H = U^T\Sigma U$, then $( I -\eta H ) = U^T ( I - \eta\Sigma )U$. Further, we have:
	$$v^T( I - \eta H ) = v^T( 1 - \eta \lambda_{min}( H )) =  v^T( 1 + \eta \lambda).$$ 
	Finally, we get:
	\begin{align*}
		E[\|u_t\|^2] \geq r^2 ( 1 + \eta \lambda)^{2t}E[( v^T \nabla f_z( x_0))^2] \geq \tau r^2 (\kappa)^{2t}
	\end{align*}
	where $\kappa =  ( 1 + \eta \lambda)$ and $E[( v^T \nabla f_z( x_0))^2] \geq \tau $ is CNC-assumption.
\end{proof}
\begin{remark}\label{remark:1}
We know $\lambda \geq \epsilon_h$, due to negative curvature assumption.
\end{remark}

\begin{lemma}\label{upperbound}
 The norm of $u_t$ is determinstically bounded as
	\begin{equation}
	\|u_t\|\leq rl\kappa^t .
	\end{equation}
\end{lemma}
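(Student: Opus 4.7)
The plan is to bound $\|u_t\|$ by submultiplicativity of the spectral norm, separating the initial displacement $x_1 - x_0$ from the matrix power $(I-\eta H)^t$. Writing
\[
\|u_t\| = \|(I - \eta H)^t(x_1 - x_0)\| \;\leq\; \|I - \eta H\|^t \cdot \|x_1 - x_0\|,
\]
I would then bound the two factors separately.

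First, the initial displacement. Since $\tilde{x}^{(1)} = \tilde{x}^0 - r\nabla f_i(\tilde{x}^0)$ is a single SGD step, we have $x_1 - x_0 = -r\nabla f_i(x_0)$ for some index $i\in[n]$. By Assumption~\ref{assumption4f}(3), $\|\nabla f_i(x_0)\| \leq l$, so $\|x_1 - x_0\| \leq rl$.

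Second, the matrix norm. The eigenvalues of $H = \nabla^2 f(x_0)$ lie in $[-L, L]$ by the $L$-Lipschitz gradient assumption, so the eigenvalues of $I - \eta H$ are of the form $1 - \eta\mu$ with $\mu \in [-L, L]$. Under the stepsize choice $\eta L = \gamma (b/n)^{2/3} \leq 1/3 < 1$, each such eigenvalue satisfies $1 - \eta\mu \geq 1 - \eta L > 0$, so $I - \eta H \succeq 0$ and
\[
\|I - \eta H\| \;=\; \max_{\mu \in \mathrm{spec}(H)} (1 - \eta\mu) \;=\; 1 - \eta \lambda_{\min}(H) \;=\; 1 + \eta\lambda \;=\; \kappa,
\]
where $\lambda = -\lambda_{\min}(H)$ as defined in Lemma~\ref{power} (and $\lambda \geq \epsilon_h > 0$ by the negative-curvature assumption of Remark~\ref{remark:1}). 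Hence $\|I - \eta H\|^t \leq \kappa^t$.

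Combining the two pieces gives $\|u_t\| \leq \kappa^t \cdot rl = rl\kappa^t$ deterministically (the bound $\|\nabla f_i(x_0)\| \leq l$ holds pointwise in $i$, not merely in expectation), which is exactly the claim. There is no real obstacle here; the only subtlety worth flagging explicitly is that the stepsize restriction $\eta L \leq 1$ is what makes $I - \eta H$ positive semidefinite and therefore allows the spectral norm to be read off directly as $1 + \eta\lambda$, rather than as $\max(1 + \eta\lambda, \eta L - 1)$, which would otherwise introduce an undesired dependence on $L$ instead of on the curvature parameter $\lambda$.
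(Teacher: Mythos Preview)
Your proof is correct and matches the paper's approach exactly: submultiplicativity gives $\|u_t\| \leq \|I-\eta H\|^t\|x_1-x_0\|$, then $\|I-\eta H\| \leq 1+\eta\lambda = \kappa$ and $\|x_1-x_0\| = r\|\nabla f_z(x_0)\| \leq rl$. Your explicit justification that $\eta L \leq 1$ forces $I-\eta H \succeq 0$ (so the spectral norm equals $1+\eta\lambda$) is a detail the paper leaves implicit.
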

\begin{proof}
	\begin{align*}
	\|u_t\| &= \|( I - \eta H )^t( x_1 - x_0 ) \|\\
	&\leq \|( I - \eta H )\|^t\|( x_1 - x_0 ) \| \\
	&\leq ( 1 + \eta \lambda)^t r\|\nabla f_z(x_0 ) \| \\
	&\leq \kappa^t r l.\\
	\end{align*}
\end{proof}

\begin{lemma} \label{delta_bound}
	Assume $E[ f( x_{t})] - f( x_0) \geq -f_{thres} $ holds, then we can get
	$$ 	E[\| \delta_t \|] \leq  (\frac{2\rho C f_{thres}}{\eta^2 \lambda^2}  +\frac{\rho C L(lr)^2}{\eta^2 \lambda^2} + \frac{2 \rho l^2 r^2}{\eta \lambda} )\kappa^t. $$
\end{lemma}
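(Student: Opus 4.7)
The plan is to start from the definition $\delta_t = \sum_{k=1}^{t}(I-\eta H)^{t-k}\bigl(\nabla g(x_k) - \nabla f(x_k)\bigr)$ and bound each factor separately. For the operator norm, note that $H = \nabla^2 f(x_0)$ has eigenvalues in $[-\lambda, L]$ with $\eta L \leq 1$, so $\|I - \eta H\| \leq 1 + \eta\lambda = \kappa$ and hence $\|(I-\eta H)^{t-k}\| \leq \kappa^{t-k}$ because $I-\eta H$ is symmetric. For the Taylor residual I would apply the Nesterov bound stated earlier, $\|\nabla g(x_k) - \nabla f(x_k)\| \leq \tfrac{\rho}{2}\|x_k - x_0\|^2$. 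Combining these with the triangle inequality and Jensen yields
\[
E[\|\delta_t\|] \leq \sum_{k=1}^{t} \kappa^{t-k} \cdot \tfrac{\rho}{2}\, E[\|x_k - x_0\|^2].
\]

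Next I would establish an iteration-level analogue of Lemma~\ref{distance_Upper_bound}, namely
\[
E[\|x_k - x_0\|^2] \leq 2(lr)^2 + k\bigl(2C f_{thres} + C L(lr)^2\bigr).
\]
The derivation mirrors the proof of Lemma~\ref{distance_Upper_bound}: decompose $\|x_k - x_0\|^2 \leq 2\|x_k - \tilde{x}^1\|^2 + 2\|\tilde{x}^1 - x_0\|^2$, control the second term deterministically by $2(lr)^2$ from the single SGD perturbation step, and telescope the first term epoch-by-epoch using Lemma~\ref{decrease_saddle}, the hypothesis $E[f(x_k)] - f(x_0) \geq -f_{thres}$, and the one-step SGD estimate $E[f(\tilde{x}^1) - f(x_0)] \leq L(lr)^2/2$ that already appeared in Lemma~\ref{distance_Upper_bound}. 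The $\mathcal{K}$ factor that appeared in Lemma~\ref{distance_Upper_bound} becomes $k$ here because the number of epochs completed by iteration $k$ is at most $k$.

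Substituting this bound and separating the constant part from the linear-in-$k$ part gives
\[
E[\|\delta_t\|] \leq \tfrac{\rho}{2}\left[ 2(lr)^2 \sum_{k=1}^{t}\kappa^{t-k} + \bigl(2C f_{thres} + C L(lr)^2\bigr)\sum_{k=1}^{t} k\,\kappa^{t-k}\right].
\]
Lemma~\ref{series} with $\beta = \eta\lambda$ bounds the first sum by $\frac{2}{\eta\lambda}\kappa^t$ and the second by $\frac{2}{(\eta\lambda)^2}\kappa^t$. Collecting coefficients reproduces the three terms in the statement: the constant $2(lr)^2$ yields $\frac{2\rho l^2 r^2}{\eta\lambda}\kappa^t$, the $2Cf_{thres}$ contribution yields $\frac{2\rho C f_{thres}}{\eta^2\lambda^2}\kappa^t$, and the $CL(lr)^2$ contribution yields $\frac{\rho C L(lr)^2}{\eta^2\lambda^2}\kappa^t$.

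The main obstacle is the iteration-level distance bound. Unlike Lemma~\ref{distance_Upper_bound}, which is stated only at epoch snapshots, here $x_k$ can lie in the middle of an SCSG inner loop whose length $N_j$ is random, so one has to carefully account for within-epoch displacement (for which Lemma~\ref{decrease_saddle} is not directly applicable) and verify that replacing the epoch count $\mathcal{K}$ by the iteration index $k$ is a valid upper bound. Once this is justified, everything else is a routine coupling of the Taylor residual estimate with the two geometric series inequalities from Lemma~\ref{series}.
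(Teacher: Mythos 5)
Your reconstruction matches the paper's approach: expand $\delta_t$, pull the operator norm out using $\|(I-\eta H)^{t-k}\| \leq \kappa^{t-k}$, apply the Nesterov residual bound $\|\nabla g(x_k)-\nabla f(x_k)\| \leq \tfrac{\rho}{2}\|x_k-x_0\|^2$, substitute a linear-in-$k$ distance bound, and then collapse the two sums via Lemma~\ref{series} with $\beta = \eta\lambda$. The arithmetic you do at the end correctly reproduces all three coefficients in the stated result, so the route is the right one.

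The one thing worth emphasizing is that the ``main obstacle'' you flag is a genuine gap, but it is a gap in the paper's own proof, not in your reading of it. The paper goes directly from $\tfrac{\rho}{2}\sum_k \kappa^{t-k} E[\|x_k - x_0\|^2]$ to the final constants without stating or proving the iteration-level distance bound $E[\|x_k - x_0\|^2] \leq 2(lr)^2 + k\bigl(2Cf_{thres} + CL(lr)^2\bigr)$ that this step manifestly requires. Lemma~\ref{distance_Upper_bound} is epoch-indexed, its telescoping argument relies on Lemma~\ref{decrease_saddle} which controls only inter-snapshot displacement $E[\|\tilde{x}^{j}-\tilde{x}^{j-1}\|^2]$, and there is no within-epoch displacement estimate in the paper that would let you control $\|x_k - x_0\|$ for $x_k$ strictly inside an inner loop. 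Your observation that the epoch count up to global iteration $k$ is at most $k$ gives a valid (if loose) way to replace $\mathcal{K}$ by $k$ for snapshot iterates, but the within-inner-loop case still needs a separate one-step argument (e.g.\ bounding $\|x_k - x_j^{\text{snap}}\|$ by the cumulative SCSG step lengths, which are $O(\eta l)$ per step, and absorbing that into the constant). This is a real missing piece in the paper, and you were right to call it out rather than gloss over it.
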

\begin{proof}
	\begin{align*} 
	E[\| \delta_t \|] &= E[\|\sum_{k=1}^{t}( I - \eta H)^{t-k}(\nabla g(x_k) - \nabla f(x_k)) \|]\\
	&\leq \sum_{k=1}^{t}( 1+\eta \lambda)^{t-k} E[\|\nabla g(x_k) - \nabla f(x_k)\|] \\
	&\leq \frac{\rho}{2}\sum_{k=1}^{t}( \kappa)^{t-k} E[\| x_k - x_0\|^2] \\
	%&\leq \frac{\rho}{2}\sum_{k=1}^{t}( \kappa)^{t-k} ( 2k C f_{thres} +kC L(lr)^2 +  2( lr)^2) \\
	&\leq (\frac{2\rho C f_{thres}}{\eta^2 \lambda^2}  +\frac{\rho C L(lr)^2}{\eta^2 \lambda^2} + \frac{2 \rho l^2 r^2}{\eta \lambda} )\kappa^t.
	\end{align*}
\end{proof}

\begin{lemma}\label{distance_lower_bound}
	[Distance Lower Bound]
	Suppose that expectation of the decrease function value is lower-bounded:
	$$E[ f( x_{t})] - f( x_0) \geq -f_{thres}. $$
	Then, the expectation of the distance from the current iteration $x_{t+1}$ to $\tilde{x}^0$ is bounded as
	\begin{equation} \label{lowerbound4distiance}
	E[ \| x_{t+1} - x_0 \|^2] \geq \frac{1}{3} \tau r^2 \kappa^{2\frac{n}{b}(\mathcal{K}-1)}. 
	\end{equation}
	where $t=\sum_{k=1}^{\mathcal{K}-1} N_k$.
\end{lemma}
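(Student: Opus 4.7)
My plan is to start from the decomposition already set up in the excerpt, namely
$x_{t+1}-x_0 = u_t + \eta(\delta_t + d_t + \zeta_t)$, and lower-bound its squared norm in expectation by a careful expansion. Expanding the square and using $-2\langle a,b\rangle \ge -2\|a\|\|b\|$ on the $\langle u_t,\delta_t\rangle$ cross term gives
\[
E[\|x_{t+1}-x_0\|^2] \;\ge\; E[\|u_t\|^2] \;-\; 2\eta\, E[\|u_t\|]\,E[\|\delta_t\|] \;+\; 2\eta\, E[u_t]^\top d_t \;+\; 2\eta\, E[u_t]^\top E[\zeta_t].
\]
Two of the four terms vanish or are nonnegative for free: Lemma \ref{removedependency} gives $E[u_t]^\top d_t\ge 0$, and the SVRG noise $\xi_k=\nabla f(x_k)-\nabla f_{I_k}(x_k)+\nabla f_{I_k}(\tilde x)-\tilde\mu$ is mean zero conditional on the iterate, so $E[\zeta_t]=0$ (with the convention that the cross term is computed conditionally on $t$ and the past, then taken in expectation). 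Thus the problem reduces to controlling $E[\|u_t\|^2]$ from below and the nuisance term $2\eta E[\|u_t\|]E[\|\delta_t\|]$ from above.

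Next I would plug in the three prior lemmas: Lemma \ref{power} gives $E[\|u_t\|^2]\ge \tau r^2\kappa^{2t}$, Lemma \ref{upperbound} gives the deterministic bound $\|u_t\|\le rl\kappa^t$, and Lemma \ref{delta_bound} bounds $E[\|\delta_t\|]$ by a constant times $\kappa^t$. Combining these, every surviving term carries the common factor $\kappa^{2t}$, so one obtains
\[
E[\|x_{t+1}-x_0\|^2 \mid t] \;\ge\; \kappa^{2t}\Bigl[\tau r^2 - \tfrac{4\rho C l r\, f_{thres}}{\eta\lambda^2} - \tfrac{2\rho C L\, l^3 r^3}{\eta\lambda^2} - \tfrac{4\rho l^3 r^3}{\lambda}\Bigr].
\]
The main technical step is then to verify that the three negative terms in the bracket are each dominated by $\tau r^2$, using the parameter constraints in Table \ref{table:parameter_setup}: $\lambda\ge\epsilon_h$ (Remark \ref{remark:1}), $f_{thres}\le \frac{\eta\tau r\epsilon_h^2}{12 l\rho C}$, and $r\le\min\!\bigl(\tfrac12,\tfrac{\eta}{CL}\bigr)\tfrac{\tau}{12\rho l^3}\epsilon_h^2$. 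A direct substitution bounds the three correction terms by $\tfrac{1}{3}\tau r^2$, $\tfrac{1}{6}\tau r^2$, and $\tfrac{1}{6}\tau r^2$ respectively, for a total of $\tfrac{2}{3}\tau r^2$, leaving at least $\tfrac{1}{3}\tau r^2\kappa^{2t}$.

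The final step is to convert the conditional bound to one involving $(\mathcal K-1)n/b$ rather than the random $t=\sum_{k=1}^{\mathcal K-1} N_k$. Since $\kappa=1+\eta\lambda\ge 1$, the map $x\mapsto\kappa^{2x}$ is convex, so Jensen's inequality gives
\[
E[\kappa^{2t}] \;\ge\; \kappa^{2\,E[t]} \;=\; \kappa^{2(\mathcal K-1)n/b},
\]
because $E[N_k]=n/b$ for $N_k\sim\mathrm{Geom}(n/(n+b))$. Taking outer expectation over $t$ in the conditional bound then yields the claimed inequality $E[\|x_{t+1}-x_0\|^2]\ge \tfrac{1}{3}\tau r^2\kappa^{2(\mathcal K-1)n/b}$.

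The chief obstacle I anticipate is bookkeeping: making sure the expectation splits cleanly so that (i) $E[\zeta_t]=0$ is actually usable in the cross term (this requires conditioning carefully, since $u_t$ and $\zeta_t$ share randomness through the trajectory), and (ii) the three distance-dependent correction terms really absorb into $\tfrac{2}{3}\tau r^2$ under the parameter constraints. The Jensen's inequality step at the end is clean but deserves an explicit mention because $t$ is a sum of a deterministic number ($\mathcal K-1$) of independent geometric random variables, not a stopping time tangled with the dynamics.
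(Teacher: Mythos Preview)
Your proposal is correct and follows essentially the same route as the paper: the same decomposition $x_{t+1}-x_0=u_t+\eta(\delta_t+d_t+\zeta_t)$, the same lower bound on the squared norm via Lemmas \ref{removedependency}, \ref{power}, \ref{upperbound}, \ref{delta_bound}, the same $\tfrac{1}{3}/\tfrac{1}{6}/\tfrac{1}{6}$ split of the nuisance terms (which in the paper is where the constraints on $r$ and $f_{thres}$ are actually \emph{derived}, not merely invoked from Table \ref{table:parameter_setup}), and the same Jensen step $E[\kappa^{2t}]\ge\kappa^{2E[t]}$ with $E[N_k]=n/b$. Your explicit remark that the $N_k$ are independent of one another and of the dynamics is a welcome clarification that the paper leaves implicit.
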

\begin{proof}
	
	We know that\\
	$ E[ u_t ]^TE[ \zeta_t ] = 0$; \\
	$E[\| u_t^Td_t \|]> 0$ by Lemma \ref{removedependency}; \\
	$E[\| u_t \|^2] \geq \tau r^2 \kappa^{2t}$ by Lemma \ref{power};\\
	$E[\| u_t \|] \leq rl \kappa^t $ by Lemma \ref{upperbound};\\
	$ 	E[\| \delta_t \|] \leq  (\frac{2\rho C f_{thres}}{\eta^2 \lambda^2}  +\frac{\rho C L(lr)^2}{\eta^2 \lambda^2} + \frac{2 \rho l^2 r^2}{\eta \lambda} )\kappa^t $ by Lemma \ref{delta_bound}.

	\begin{align*}
	E[ \| x_{t+1} - x_0 \|^2] &\geq  E[\| u_t \|^2] - 2\eta E[\| u_t \|]E[\| \delta_t \|]
	+ 2\eta E[ u_t]^Td_t + 2\eta E[ u_t ]^TE[ \zeta_t ]\\
	&\geq  \tau r^2 \kappa^{2t} - 2\eta rl \kappa^t ( (\frac{2\rho C f_{thres}}{\eta^2 \lambda^2}  +\frac{\rho C L(lr)^2}{\eta^2 \lambda^2}+ \frac{2 \rho l^2 r^2}{\eta \lambda} )\kappa^t)\\
	&\geq ( \tau r - ( \frac{4 l \rho C f_{thres}}{\eta \lambda^2} +\frac{2\rho C Ll^3r^2}{\eta \lambda^2}+ \frac{4\rho l^3 r^2}{\lambda} ) r \kappa^{2t}.\\
	\end{align*}
	
	In order to get contradiction on the upper bound and lower bound of $ E[ \| x_{t+1} - x_0 \|^2]$, we will first try to setup parameters for $f_{thres}$ and $r$ for lower bound:
	\begin{align*}
	\frac{4\rho l^3 r^2}{\lambda} &\leq \frac{1}{6} \tau r \Rightarrow r  \leq \frac{\tau \lambda}{24 \rho l^3}\xRightarrow{\lambda \geq \epsilon_h} r  \leq \frac{\tau \epsilon_h}{24 \rho l^3}\\
	\frac{2\rho C Ll^3r^2}{\eta \lambda^2} &\leq \frac{1}{6} \tau r \Rightarrow r  \leq \frac{\eta\tau \lambda^2}{12 \rho CL l^3}\xRightarrow{\lambda \geq \epsilon_h} r  \leq \frac{\eta\tau \epsilon_h^2}{12 \rho C L l^3}, \\
	\frac{4 l \rho C f_{thres}}{\eta \lambda^2} &\leq \frac{1}{3} \tau r \Rightarrow f_{thres} \leq \frac{\eta \tau r \lambda^2}{12 l \rho C} \xRightarrow{\lambda \geq \epsilon_h} f_{thres} \leq \frac{\eta \tau r \epsilon_h^2}{12 l \rho C}.
	\end{align*}
	where  $\lambda \geq \epsilon_h$ is due to remark \ref{remark:1}
	Therefore,
	\begin{align}
		&r  \leq min( \frac{\tau }{24 \rho l^3},  \frac{\eta\tau }{12 \rho C Ll^3})\epsilon_h^2 = C_2 \frac{\tau}{12 \rho l^3} \epsilon_h^2,\label{constraint_r} \\
		&f_{thres }  \leq \frac{\eta \tau r \epsilon_h^2}{12 l \rho C} \label{constraint_f},
	\end{align}
	where $C_2 = min( \frac{1 }{2},  \frac{\eta }{ C L})$.
	
	Given above requirements for parameter $f_{thres}$ and $r$, we obtain the new lower bound:
	\begin{align}
		E[ \| x_{t+1} - x_0 \|^2] &\geq E[\frac{1}{3} \tau r^2 \kappa^{2t}]\\
		&= E[\frac{1}{3} \tau r^2 \kappa^{2\sum_{k=1}^{\mathcal{K}-1}N_k}]\\
		&\geq\frac{1}{3} \tau r^2 \kappa^{2\sum_{k=1}^{\mathcal{K}-1}E[N_k]}\\
		&=\frac{1}{3} \tau r^2 \kappa^{2\frac{n}{b}(\mathcal{K}-1)}
	\end{align}
	where the last inequality is due to $N_k, k = 1, ..., \mathcal{K}-1$ are independent with each other and Jensen inequality.

\end{proof}

\begin{lemma}\label{eigenvalue}
	If the Hessian matrix at $x_0$ has a small negative eigenvalue, i.e. 
	$ \lambda_{min}( \nabla f^2( x_0 )) \leq -\epsilon_h$
	and $  \mathcal{K}_{thres} \geq \frac{C_1}{\eta \epsilon_h} \log( \frac{12 l}{\tau r \epsilon_h}) $, where $C_1 $ is a sufficient large constant,
	then there exists a state $\mathcal{K} \leq \mathcal{K}_{thres}$ such that expectation of the function value decrease as 
	$$ f( \tilde{x}^0) - E[f( \tilde{x}^{\mathcal{K}})]  \geq f_{thres}.  $$
	
\end{lemma}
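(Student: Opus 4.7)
\textbf{Proof proposal for Lemma \ref{eigenvalue}.}

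The plan is to argue by contradiction. Suppose, toward contradiction, that for every $\mathcal{K} \leq \mathcal{K}_{thres}$ we have
$f(\tilde{x}^0) - E[f(\tilde{x}^{\mathcal{K}})] < f_{thres}$, equivalently $E[f(\tilde{x}^{\mathcal{K}})] - f(\tilde{x}^0) > -f_{thres}$. Under this standing hypothesis, Lemma \ref{distance_Upper_bound_t+1} applies at the specific global iteration $t = \sum_{k=1}^{\mathcal{K}-1} N_k$ corresponding to the end of epoch $\mathcal{K}-1$ plus one SCSG step, yielding the linear-in-$\mathcal{K}$ upper bound
\[
E[\|x_{t+1} - \tilde{x}^0\|^2] \leq 4\mathcal{K} C f_{thres} + 2\mathcal{K} C L(lr)^2 + 4(lr)^2 + 2(l\eta)^2.
\]
Simultaneously, Lemma \ref{distance_lower_bound} (whose hypothesis is guaranteed by the same standing assumption, together with the parameter constraints \eqref{constraint_r}--\eqref{constraint_f} on $r$ and $f_{thres}$ which are already enforced in Table \ref{table:parameter_setup}) produces the exponential-in-$\mathcal{K}$ lower bound
\[
E[\|x_{t+1} - \tilde{x}^0\|^2] \geq \tfrac{1}{3}\tau r^2 \kappa^{2(n/b)(\mathcal{K}-1)},
\]
where $\kappa = 1 + \eta \lambda \geq 1 + \eta \epsilon_h$ by Remark \ref{remark:1}.

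The core of the argument is then the mismatch of growth rates: the lower bound grows like $e^{2(n/b)\eta\epsilon_h (\mathcal{K}-1)}$ while the upper bound grows only linearly in $\mathcal{K}$. I would first use $\log(1+\eta\epsilon_h) \geq \tfrac{1}{2}\eta\epsilon_h$ (valid since $\eta\epsilon_h$ is small) to rewrite the lower bound as at least $\tfrac{1}{3}\tau r^2 \exp\bigl((n/b)\eta\epsilon_h(\mathcal{K}-1)\bigr)$. Next I would confront the two bounds and ask for the smallest $\mathcal{K}$ such that
\[
\tfrac{1}{3}\tau r^2 \exp\bigl((n/b)\eta\epsilon_h(\mathcal{K}-1)\bigr) > 4\mathcal{K} C f_{thres} + 2\mathcal{K} C L(lr)^2 + 4(lr)^2 + 2(l\eta)^2.
\]
Bounding the right-hand side crudely by something of the form $A\mathcal{K}$ (absorbing the $\epsilon_h$-independent constants), a rearrangement shows it suffices to take $\mathcal{K} \geq \frac{C_1}{\eta \epsilon_h} \log\bigl(\frac{12 l}{\tau r \epsilon_h}\bigr)$ for a sufficiently large constant $C_1$ depending on $n/b$, $C$, $L$, $l$, $\tau$, and the allowed ranges of $r$ and $f_{thres}$ but independent of $\epsilon_h$. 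Since $\mathcal{K}_{thres}$ is chosen at least this large, the hypothesized inequality $E[f(\tilde{x}^{\mathcal{K}_{thres}})] - f(\tilde{x}^0) > -f_{thres}$ produces a contradiction, proving the existence of some $\mathcal{K} \leq \mathcal{K}_{thres}$ at which $f(\tilde{x}^0) - E[f(\tilde{x}^{\mathcal{K}})] \geq f_{thres}$.

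The step I expect to be the main obstacle is the bookkeeping in the last paragraph: isolating $\mathcal{K}$ from an inequality of the form $e^{c_1 \eta \epsilon_h \mathcal{K}} \geq c_2 \mathcal{K} + c_3$ and verifying that the resulting threshold can indeed be written as $C_1(\eta\epsilon_h)^{-1}\log\bigl(12l/(\tau r \epsilon_h)\bigr)$ with a constant $C_1$ that is truly independent of $\epsilon_h$ after the constraints \eqref{constraint_r}--\eqref{constraint_f} (which themselves couple $r$ and $f_{thres}$ to $\epsilon_h$) are substituted. A small subtlety is also to confirm that the lower bound from Lemma \ref{distance_lower_bound} is applied at the first $\mathcal{K}$ where the decrease condition would still be violated, so that its precondition is valid throughout $1 \leq \mathcal{K} \leq \mathcal{K}_{thres}$; this is immediate because we assumed the decrease fails for every such $\mathcal{K}$, but it deserves an explicit sentence to avoid a logical gap.
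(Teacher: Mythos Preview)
Your proposal is correct and follows the same overall strategy as the paper: argue by contradiction, invoke Lemma~\ref{distance_Upper_bound_t+1} for the linear-in-$\mathcal{K}$ upper bound on $E[\|x_{t+1}-\tilde{x}^0\|^2]$ and Lemma~\ref{distance_lower_bound} for the exponential-in-$\mathcal{K}$ lower bound, then force the two to cross. The paper's execution differs only tactically: rather than your direct growth-rate comparison, it first substitutes the constraints \eqref{constraint_r}--\eqref{constraint_f} (and $\tau\le l^2$, $\epsilon_h^4\le 1$) into the upper bound so that the linear slope becomes manifestly $\epsilon_h$-independent, and then locates the intersection via a calculus step (setting the derivative of the difference to zero to find a critical $\mathcal{K}^*$ and bounding $\mathcal{K}_{point}\le 2\mathcal{K}^*+\text{const}$). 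This buys a cleaner verification that $C_1$ is $\epsilon_h$-independent --- exactly the bookkeeping you flagged as the main obstacle --- whereas your route is slightly more streamlined but leaves that check implicit.
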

\begin{proof}

	By Lemma \ref{distance_lower_bound}, we can see that the distance between $x_{t+1}$ and $x_0$ is lower bounded by an exponential term in (\ref{lowerbound4distiance})
		\begin{equation*} 
	E[ \| x_{t+1} - x_0 \|^2] \geq \frac{1}{3} \tau r^2 \kappa^{2\frac{n}{b}(\mathcal{K}-1)}. 
	\end{equation*}
	However, in Lemma \ref{distance_Upper_bound_t+1}, we also obtain an upper bound for this distance in (\ref{upperbound4distiance_t+1}),
		\begin{equation*} 
	E[\|  x_{t+1} - \tilde{x}^0\|^2] \leq 4\mathcal{K}C f_{thres} + 2\mathcal{K}C L(lr)^2 +4( lr)^2+ 2( l\eta)^2 
	\end{equation*}
	which is a linear function in terms of iteration $\mathcal{K}$. 
	
	First, let 
	\begin{align} \label{eq:1}
	   4\mathcal{K}C f_{thres} + 2\mathcal{K}C L(lr)^2 +4( lr)^2+ 2( l\eta)^2 = 0
	\end{align}

	Put all constrains (\ref{constraint_r}), (\ref{constraint_f}) and $\tau \leq l^2$ into equality (\ref{eq:1}), we get a new linear function in terms of $\mathcal{K}$:
		\begin{align} \label{eq:2}
	   (\frac{ \eta^2 }{36 \rho^2 C L  } + \frac{ \eta^2}{72 \rho^2 C L^2 })\epsilon_h^4\mathcal{K}+4( lr)^2+ 2( l\eta)^2 = 0
	\end{align}
	Then based on $\epsilon_h^4 \leq 1 $, we get : 
	\begin{align} \label{eq:2}
	   (\frac{ \eta^2 }{36 \rho^2 C L  } + \frac{ \eta^2}{72 \rho^2 C L^2 })\mathcal{K}+4( lr)^2+ 2( l\eta)^2 = 0
	\end{align}
	The point of intersection of  $$y = (\frac{ \eta^2 }{36 \rho^2 C L  } + \frac{ \eta^2}{72 \rho^2 C L^2 })\mathcal{K}+4( lr)^2+ 2( l\eta)^2 ~~\&~~ y=\frac{1}{3} \tau r^2 \kappa^{2\frac{n}{b}(\mathcal{K}-1)} $$ is larger than the point of intersection of $$ y = 4\mathcal{K}C f_{thres} + 2\mathcal{K}C L(lr)^2 +4( lr)^2+ 2( l\eta)^2 ~~\&~~ y=\frac{1}{3} \tau r^2 \kappa^{2\frac{n}{b}(\mathcal{K}-1)} $$
	
	Therefore, we only need to find the point of intersection $\mathcal{K}_{point}$ of  $$y = (\frac{ \eta^2 }{36 \rho^2 C L  } + \frac{ \eta^2}{72 \rho^2 C L^2 })\mathcal{K}+4( lr)^2+ 2( l\eta)^2 = 0 ~~\&~~  y=\frac{1}{3} \tau r^2 \kappa^{2\frac{n}{b}(\mathcal{K}-1)} $$  and let $\mathcal{K}_{thresh} \geq \mathcal{K}_{point}$.

Let $$f( \mathcal{K}) = \frac{1}{3} \tau r^2 (1+ \eta \epsilon_h)^{2\frac{n}{b}(\mathcal{K}-1)} - (\frac{ \eta^2 }{36 \rho^2 C L  } + \frac{ \eta^2}{72 \rho^2 C L^2 })\mathcal{K}-4( lr)^2- 2( l\eta)^2$$
Then we calculate the gradient of $f( \mathcal{K})$ and let it be 0:
$$f'( \mathcal{K}) = \frac{2}{3} \frac{n}{b} \tau r^2 (1+ \eta \epsilon_h)^{2\frac{n}{b}(\mathcal{K}-1)} log( (1+ \eta \epsilon_h) - (\frac{ \eta^2 }{36 \rho^2 C L  } + \frac{ \eta^2}{72 \rho^2 C L^2 })=0 $$
\begin{align*}
    \mathcal{K^*} &= \frac{b}{2n}\frac{log( \frac{3b}{2n\tau r^2 log( (1+ \eta \epsilon_h) }(\frac{ \eta^2 }{36 \rho^2 C L  } + \frac{ \eta^2}{72 \rho^2 C L^2 }))}{log( 1 + \eta \epsilon_h)} + 1\\
    & \leq \frac{b}{2n \eta \epsilon_h}log( \frac{3b}{2n\tau r^2 \eta \epsilon_h  }(\frac{ \eta^2 }{36 \rho^2 C L  } + \frac{ \eta^2}{72 \rho^2 C L^2 })) + 1
\end{align*}
here we use $log (1+a) \geq a$ if $a \in (-1,1]$.

Based on property of function $f( \mathcal{K})$, we know :
\begin{align*}
    &\mathcal{K}_{point} \leq 2\mathcal{K^*}  + \frac{4( lr)^2+ 2( l\eta)^2}{(\frac{ \eta^2 }{36 \rho^2 C L  } + \frac{ \eta^2}{72 \rho^2 C L^2 })}\\
    &=\frac{b}{n \eta \epsilon_h}log( \frac{3b}{2n\tau r^2 \eta \epsilon_h  }(\frac{ \eta^2 }{36 \rho^2 C L  } + \frac{ \eta^2}{72 \rho^2 C L^2 })) + \frac{4( lr)^2+ 2( l\eta)^2}{(\frac{ \eta^2 }{36 \rho^2 C L  } + \frac{ \eta^2}{72 \rho^2 C L^2 })} +  2\\
    & =\frac{b}{n \eta \epsilon_h}(log( \frac{1}{\epsilon_h^5}) + log( \frac{3*144\rho^2l^6b}{2nC_2^2\tau^3\eta  }(\frac{ \eta^2 }{36 \rho^2 C L  } + \frac{ \eta^2}{72 \rho^2 C L^2 })) + \frac{4( lr)^2+ 2( l\eta)^2}{(\frac{4 \eta }{C L \rho^2 } + \frac{2 \eta^2}{144 C L^2 \rho^2})} + 2\\
    &= \frac{b}{n \eta \epsilon_h}log( \frac{1}{\epsilon_h^5})(1 + \frac{1}{log( \frac{1}{\epsilon_h^5})}log( \frac{3*144\rho^2l^6b}{2nC_2^2\tau^3\eta  }(\frac{ \eta^2 }{36 \rho^2 C L  } + \frac{ \eta^2}{72 \rho^2 C L^2 })) + \frac{\epsilon_h}{log( \frac{1}{\epsilon_h^5})}(\frac{4( lr)^2+ 2( l\eta)^2}{(\frac{ \eta^2 }{36 \rho^2 C L  } + \frac{ \eta^2}{72 \rho^2 C L^2 })} +  2)\\
    &\leq \frac{b}{n \eta \epsilon_h}log( \frac{1}{\epsilon_h^5})(1 + \frac{1}{log( \frac{1}{(0.1)^5})}log( \frac{3*144\rho^2l^6b}{2nC_2^2\tau^3\eta  }(\frac{ \eta^2 }{36 \rho^2 C L  } + \frac{ \eta^2}{72 \rho^2 C L^2 })) + \frac{1}{log( \frac{1}{(0.1)^5})}(\frac{4( lr)^2+ 2( l\eta)^2}{(\frac{4 \eta }{C L \rho^2 } + \frac{2 \eta^2}{144 C L^2 \rho^2})} +  2)\\
     &\leq \frac{b}{n \eta \epsilon_h}log( \frac{1}{\epsilon_h^5})(1 + \frac{1}{log( \frac{1}{(0.1)^5})}log( \frac{3*144\rho^2l^6b}{2nC_2^2\tau^3\eta  }(\frac{ \eta^2 }{36 \rho^2 C L  } + \frac{ \eta^2}{72 \rho^2 C L^2 }))\\& + \frac{1}{log( \frac{1}{(0.1)^5})}(\frac{4( l)^2C_2 \frac{\tau}{12 \rho l^3}+ 2( l\eta)^2}{(\frac{ \eta^2 }{36 \rho^2 C L  } + \frac{ \eta^2}{72 \rho^2 C L^2 })} +  2)\\
      &\leq C_1 \frac{b}{n \eta \epsilon_h}log( \frac{1}{\epsilon_h^5})
\end{align*}
where $C_1 \geq (1 + \frac{1}{log( \frac{1}{(0.1)^5})}log( \frac{3*144\rho^2l^6b}{2nC_2^2\tau^3\eta  }(\frac{ \eta^2 }{36 \rho^2 C L  } + \frac{ \eta^2}{72 \rho^2 C L^2 })) + \frac{1}{log( \frac{1}{(0.1)^5})}(\frac{4( l)^2C_2 \frac{\tau}{12 \rho l^3}+ 2( l\eta)^2}{(\frac{4 \eta }{C L \rho^2 } + \frac{2 \eta^2}{144 C L^2 \rho^2})} +  2)$

In summary,  the lower bound will be eventually larger than the upper bound with the increase of epoch $\mathbb{K}$ and let $ \mathcal{K}_{thres} \geq \frac{C_1}{\eta \epsilon_h} \frac{n}{b}\log( \frac{1}{\epsilon_h}) $, where $C_1 $ is a sufficient large constant and independent with $\epsilon_h$. The contradiction is introduced and the assumption about decrease of function value is broken.

\end{proof}

\textbf{Proof of Convergence }
In order to cooperate with constraints derived in previous cases, $g_{thres}$ should be carefully selected. We argue that constants $\gamma$ and $C_1$ exist such that 
	
	\begin{align*}
		 \frac{10l^2\gamma^2}{L \delta}(\frac{b}{n})^{\frac{1}{3}} &\leq \frac{n}{b}\frac{\eta^2 \epsilon_h^3 \tau r}{C_1 12 l \rho C} log^{-1}( \frac{1}{\epsilon_h}) 
		\leq \frac{\gamma}{5L} ( \frac{n}{b})^\frac{1}{3} \epsilon_g^2, 
	\end{align*}
	
	where $\gamma$ is a small constant and $C_1$ is a large constant.
	
	With above argument about the existence of $\gamma, C_1$, let $g_{thres} = \frac{\eta^2 \epsilon_h^3 \tau r}{C_1 12 l \rho C} \log^{-1}( \frac{12l}{\tau r \epsilon_h})$ and we are ready to prove the main theorem under probabilistic setting.

We define event $A_t$ as 
$$ A_t := \{\|\nabla f( \tilde{x}^{t} ) \| \geq \epsilon_g ~or~ \lambda_{min}( \nabla^2 f( \tilde{x}^{t}  )) \leq -\epsilon_h\}.$$
Let $R$ be a random variable represented the ratio of second-order stationary points visited at the end of each epoch in past T epochs by our algorithm. Hence:
$R = \frac{1}{T}\sum_{t =1 }^{T} \mathbf{I} (A_t^c),$
where $\mathbf{I}(\cdot)$ is an indicator function. Further, let $P_t$ be the probability of event $A_t$ and $1-P_t$ be the probability of its complement $A_t^c$. Our goal will be 
$$E( R) = \frac{1}{T} \sum_{t=1}^{T}( 1 - P_t)$$
in high probability, which means:
$$E( R) = \frac{1}{T} \sum_{t=1}^{T}( 1 - P_t) \geq 1- \delta, $$
or
$$\frac{1}{T}\sum_{t=1}^{T} P_t \leq \delta.$$

Estimating the probabilities $P_t$ for $t \in \{1, ..., T\}$ are very difficult. However, we can bound them together. Our algorithm obtains the guaranteed function value decrease per epoch in large gradient and large negative curvature regimes:
$$E[ f( \tilde{x}^{t} ) - f( \tilde{x}^{t-1} )| A_t] \leq -g_{thres},$$
and the increase of function value around second-order stationary points is controlled by our choice of parameters:
$$E[f( \tilde{x}^{t} ) - f( \tilde{x}^{t-1} )| A_t^c] \leq \delta g_{thres}/2.$$
Then we can obtain expected change of function values:
$$E[ f( \tilde{x}^{t} ) - f(\tilde{x}^{t-1} )] \leq \delta (g_{thres}/2 )( 1 - P_t) - g_{thres}P_t.$$
Summing the above inequality over the $T$ and reorganizing, we obtain:
$$\frac{1}{T}\sum_{t=1}^{T} P_t \leq \frac{f( x_0) - f^*}{Tg_{thres}} + \frac{\delta}{2} \leq \delta.$$
Therefore, 
$$T = \frac{2( f( x_0) - f^*)}{\delta g_{thres}} 
= \frac{b}{n} \frac{288 C C_1\rho^2 l^4 ( f( x_0) - f^*)}{C_2\delta \eta^2 \tau^2 \epsilon_h^5} \log( \frac{1}{\epsilon_h})
= \frac{b}{n} \frac{288 C C_1 l^4 ( f( x_0) - f^*)}{C_2\delta \eta^2 \tau^2 \epsilon^2} \log( \frac{1}{ \sqrt{\rho}\epsilon^{\frac{2}{5}}}).$$

\subsection{Proof of Computational Complexity}
Considering constants $C$, $C_1$, $C_2$ and $\gamma$ in Theorem 3.1 are independent of $\epsilon$, the convergence rate will be $E[\sum_{k=1}^{T} N_k]=\frac{n}{b}T =\tilde{O}( \epsilon^{-2} log( 1/\epsilon))$. If we further require $C_4 = \frac{n}{b}$ to be a constant independent with $n$, then the number of epochs $T$ only depends on $C_4=\frac{n}{b}$. Therefore, the number of IFO calls can be derived by $E[\sum_{k=1}^{T} (n + b N_k )]=2nT=O( n\epsilon^{-2} log( 1/\epsilon))$.%As a stochastic algorithm, the convergence rate of CNC-SCSG is similar to CNC-GD and much better than CNC-SGD.

\section{Proofs for the proposed general framework}
As the same situation in Neon and Neon2, generic framework also depends on how to check the $\epsilon_g$-first-order condition (Line 3). In practice, we can  use some approximation approaches to check the $\epsilon_g$-first-order condition. For practical consideration, we can make sample size $|S|$ large enough and use $\nabla f_{S}(x_{t-1}^k)$ to check $\epsilon_g$-first-order condition. When we say sample size large enough, it means that $\| \nabla f_{S}(x_{t-1}^k) -  \nabla f(x_{t-1}^k)\| \leq \epsilon/2$ with high probability $1-\delta$, hence $\| \nabla f_{S}(x_{t-1}^k) \| \leq \epsilon/2$ can guarantee that  $\| \nabla f(x_{t-1}^k) \| \leq \epsilon$ with high chance. We say the $\epsilon_g$-first-order stationary point is found and the algorithm is ready for SGD jump step. The sample size $|S|$  can be determined by the following lemma which was first established in \citep{ghadimi2016mini}:
\begin{lemma}\label{prob}
	Suppose function $f$ is gradient-bounded . Let $\nabla f_{S} ( x )= \frac{1}{|S|} \sum_{z\in S} \nabla f_z( x) $, for any $\epsilon, \delta \in (0, 1)$, $x\in\mathbb{R}^d$, when $|S| \geq \frac{2l^2( 1+ log( 1/\delta))}{\epsilon^2}$, we have $Pr( \| \nabla f_S( x) - \nabla f( x ) \| \leq \epsilon) \geq 1 - \delta$.
\end{lemma}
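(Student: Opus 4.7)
The plan is to reduce this to a standard vector-valued concentration inequality for sums of i.i.d.\ bounded random vectors. I would first introduce the centered summands $Y_z := \nabla f_z(x) - \nabla f(x)$ for $z$ drawn uniformly from $[n]$, so that $\nabla f_S(x) - \nabla f(x) = \frac{1}{|S|}\sum_{z\in S} Y_z$ is a sample average of i.i.d.\ mean-zero vectors. Two elementary properties of $Y_z$ are all that is needed: by Assumption \ref{assumption4f}(3) and the triangle inequality, $\|Y_z\| \leq 2l$ almost surely; and by Jensen's inequality, $E[\|Y_z\|^2] \leq E[\|\nabla f_z(x)\|^2] \leq l^2$ since $\nabla f(x) = E[\nabla f_z(x)]$ minimizes $E[\|\nabla f_z(x) - c\|^2]$ over $c$.

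Next, I would invoke the vector Bernstein-type large deviation bound used in the cited work of Ghadimi and Lan, namely that for any i.i.d.\ sequence of zero-mean random vectors $Y_1, \dots, Y_m$ with $E[\|Y_i\|^2] \leq \sigma^2$ and a suitable sub-Gaussian/boundedness condition, one has
\begin{equation*}
\Pr\!\left(\left\|\tfrac{1}{m}\textstyle\sum_{i=1}^{m} Y_i\right\| \geq \epsilon\right) \leq \exp\!\left(1 - \tfrac{m\epsilon^2}{2\sigma^2}\right).
\end{equation*}
Applying this with $\sigma^2 = l^2$ and $m = |S|$ yields a tail bound of $\exp(1 - |S|\epsilon^2/(2l^2))$.

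Finally, I would solve for the sample size that forces this tail probability to be at most $\delta$: the inequality $\exp(1 - |S|\epsilon^2/(2l^2)) \leq \delta$ rearranges to $|S| \geq \frac{2l^2(1 + \log(1/\delta))}{\epsilon^2}$, which is precisely the stated threshold. The main obstacle is simply invoking the correct form of the vector concentration inequality; the $+1$ inside the logarithm (rather than a plain $\log(1/\delta)$) is exactly the fingerprint of the Pinelis/Juditsky--Nemirovski-style bound used in \citep{ghadimi2016mini}, so one must be careful to cite that specific form rather than a generic Hoeffding-type inequality, which would produce a slightly different constant.
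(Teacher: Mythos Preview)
The paper does not actually supply a proof of this lemma: it is stated verbatim with the remark that it ``was first established in \citep{ghadimi2016mini}'' and then used as a black box. Your proposal correctly reconstructs the argument underlying that cited result --- centering the stochastic gradients, bounding the second moment by $l^2$, invoking the Pinelis/Juditsky--Nemirovski vector concentration bound with the characteristic $\exp(1-\cdot)$ prefactor, and inverting to get the sample-size threshold --- so there is nothing to compare against here beyond noting that you have filled in what the paper deliberately outsourced.
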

\subsection{Proof of Theorem 4.3}
Lemma 4.1 guarantees that the decrease of function value per epoch in expectation around strict saddle points is $D_e$. The assumption about algorithm $ \mathcal{A} $ in Theorem 4.2 makes sure that $ \mathcal{A} $ will drop function value in $D_{\mathcal{A}( \epsilon_g)}$ per epoch in expectation when $\|\nabla f( x)\| > \epsilon_g$ and won't increase function value too much in expectation when  $\|\nabla f( x) \|\leq \epsilon_g$ and $\lambda_{min}( \nabla^2 f( x) ) \leq  \epsilon_h$. Then let $g_{thres} = min(D_e, D_{\mathcal{A}( \epsilon_g)})$ and follow previous argument for main Theorem , we can obtain the desired result.

\section{A concrete case for algorithm $\mathcal{A}$ }
Let  algorithm $\mathcal{A}$ be one epoch of SCSG with batch size $B_1$ and $b_1$, then by Theorem 3.1, we can  obtain that
$$ E[\|\nabla f ( y)\|^2] \leq \frac{5 b_1}{\eta B_1} E[ f( x) - E( y)] + \frac{6\mathbf{I}( B_1 < n )}{B_1} l^2$$                                                                                                 
where $\mathbf{I}(\cdot )$ is an indicator function.
\begin{algorithm}[htb]
	\caption {SCSG-epoch}
	\label{alg:SCSG_epoch} 
	\begin{algorithmic}
		\Require $x$, $\eta$, $B_1$, $b_1$
		\State Randomly pick $I_B \in \{1,\dots,n\}$ where $|I_B| = B_1$  
		\State $\tilde{\mu} =  \nabla f_{I_B}(x)$
		\State Generate $N \sim $ Geom( $B_1/B_1-b_1$) 
		\For{$t=1,2,\dots, N$}
		\State Randomly pick $I_b \subset \{1,...,n\}$ and update weight, where $| I_b | = b_1$
		\State $x_t = x_{t-1} - \eta( \nabla f_{I_b}(x_{t-1}) - \nabla f_{I_b}( x) + \tilde{\mu} )$
		\EndFor
		\State return $ y = x_N$
	\end{algorithmic}
\end{algorithm}

When $\|\nabla f ( y)\|  > \epsilon_g$, 
\begin{align*}
E[f( x) - E( y)] &\geq \frac{\eta B_1}{b_1} ( \epsilon_g^2 -  \frac{6\mathbf{I}( B_1 < n )}{B_1} l^2) \\
&\geq  \frac{\eta B_1}{2b_1}  \epsilon_g^2 
\end{align*}
where $B_1 \geq \frac{12l^2}{\epsilon_g^2} $. Therefore, $D_{\mathcal{A}( \epsilon_g)} = \frac{\eta B_1 }{2 b_1}  \epsilon_g^2$.

When $\|\nabla f ( y)\|  \leq \epsilon_g$, 
\begin{align*}
E( f( y) - E( x)) &\leq \frac{\eta B_1}{b_1} (  \frac{6\mathbf{I}( B_1 < n )}{B_1} l^2) \\
&\leq  \frac{\eta }{2b_1}  l^2.
\end{align*}
Then, the expected increase of function value is  $\frac{\eta }{2b_1}  l^2$. Let
$$\frac{\eta }{2b_1}  l^2 \leq min(D_e, D_{\mathcal{A}( \epsilon_g)}), $$ 
$$b_1 \geq \frac{\eta l^2}{2} max(\frac{1}{D_e}, \frac{1}{D_{\mathcal{A}( \epsilon_g)}}).$$

Therefore, when $b_1 \geq  \frac{\eta l^2}{2} max(\frac{1}{D_e} ,\frac{1}{D_{\mathcal{A}( \epsilon_g)}})$ and $B_1 \geq max( \frac{12l^2}{\epsilon_g^2} ,b_1)$, SCSG-epoch satisfies the assumption of algorithm $\mathcal{A}$.

%\begin{corollary} \label{ corollary:SCSG}
%	Given assumptions \ref{assumption4f}, \ref{CNC_assumption} , and further assume that the first order algorithm$ \mathcal{A} $ is one epoch SCSG algorithm with batch sizes $B_1$, $b_1$, then with high probability,  CNC-SVRG-Escaping-SCSG converge to $(\epsilon_g, \epsilon_h )$ second-order stationary points in 
%	$$????$$
%	iterations.
%\end{corollary}

\section{The anisotropic noise in SGD}

In this section, we intend to justify the CNC-assumption. In non-convex optimization, the updating formula for SGD is:
$$x_{t+1} = x_{t} - r\nabla f_z( x_{t}),$$
where $r$ is the stepsize, $z$ is randomly sampled from $\{1,2,\dots,n\}.$

Then we know the covariance matrix of stochastic gradient $\nabla f_z(x )$:
\begin{align} 
Var(\nabla f_z( x ))
& = \frac{1}{n} \sum_{z= 1}^{n}  ( \nabla f_z( x ) - \nabla f( x ) ) (  \nabla f_z( x ) - \nabla f( x ) )^T \label{cov_fisher}\\ 
& = \frac{1}{n} \sum_{z= 1}^{n} \nabla f_z( x ) (\nabla f_z( x ))^T - \nabla f( x ) (\nabla f( x ) )^T \nonumber\\
&\Downarrow \nonumber\\
&around \;\; saddle\;\; points \Longrightarrow \|\nabla f( x ) \|\leq \epsilon	\nonumber\\
&\Downarrow \nonumber\\
&\approx\frac{1}{n} \sum_{z= 1}^{n} \nabla f_z( x ) (\nabla f_z( x ))^T \nonumber\\
&=F_E( x ), \nonumber
\end{align}
where $F_E(x)$ is empirical fisher information matrix. 
\begin{lemma} \label{fisher_hessian}
	Assume $ P( z | x) \propto exp^{ -f_z( x)} $, then
	
	$$E_{P(z|x)}[\nabla^2 \log( P( z|x))] = - F,$$
	where $F = E_{P( z|x)}[ \nabla f_z( x) \nabla f_z( x)^T]$ is Fisher information matrix.

\end{lemma}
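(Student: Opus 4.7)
My plan is to derive the standard Fisher-information identity for the model $P(z\mid x)\propto\exp(-f_z(x))$ and then extract the claimed equality. Writing $Z(x)=\sum_z\exp(-f_z(x))$, I start from
\[
\log P(z\mid x)=-f_z(x)-\log Z(x),
\]
so that $\nabla_x\log P(z\mid x)=-\nabla f_z(x)-\nabla\log Z(x)$ and $\nabla_x^2\log P(z\mid x)=-\nabla^2 f_z(x)-\nabla^2\log Z(x)$ (the $\nabla^2\log Z$ piece is deterministic in $z$). Differentiating the normalization identity $\sum_z P(z\mid x)=1$ once and swapping derivative with sum (justified by Assumption~\ref{assumption4f}'s Lipschitz and boundedness conditions through dominated convergence) gives the classical score-mean-zero relation $E_{P(z\mid x)}[\nabla\log P(z\mid x)]=0$, which in turn yields $\nabla\log Z(x)=-E_{P(z\mid x)}[\nabla f_z(x)]$.

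Next I differentiate the score-mean-zero identity once more to obtain the Fisher identity
\[
E_{P(z\mid x)}[\nabla^2\log P(z\mid x)]=-E_{P(z\mid x)}\!\left[\nabla\log P\,\nabla\log P^{T}\right].
\]
Substituting $\nabla\log P=-\nabla f_z+E_{P(z\mid x)}[\nabla f_z]$ into the right-hand side and expanding the outer product causes the cross terms to combine into a centred second moment,
\[
E_{P(z\mid x)}\!\left[\nabla\log P\,\nabla\log P^{T}\right]=E_{P(z\mid x)}[\nabla f_z\,\nabla f_z^{T}]-E_{P(z\mid x)}[\nabla f_z]\,E_{P(z\mid x)}[\nabla f_z]^{T}.
\]
The first term is precisely $F$, so combining with the Fisher identity gives
$E_{P(z\mid x)}[\nabla^2\log P]=-F+E_{P(z\mid x)}[\nabla f_z]E_{P(z\mid x)}[\nabla f_z]^{T}$, which collapses to the stated identity $-F$ exactly when the mean score vanishes.

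The two main technical steps are the derivative--sum interchanges; these are routine given Assumption~\ref{assumption4f}. The one subtlety worth flagging in the write-up, and which I expect to be the only real point of care rather than a genuine obstacle, is that the identity as printed is strictly an equality only when $E_{P(z\mid x)}[\nabla f_z]=0$ (i.e., $x$ is a critical point of $-\log Z$); otherwise it picks up the rank-one correction $E[\nabla f_z]E[\nabla f_z]^{T}$. In the regime targeted by the preceding remark---near the saddle points of $f$ where $\nabla f(x)\approx 0$ and the weighting in $E_{P(z\mid x)}$ is close to uniform---this correction is negligible, so I would either state the lemma with the correction made explicit or explicitly invoke the saddle-point approximation from the remark.
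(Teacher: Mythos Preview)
Your approach is essentially the same as the paper's: both routes establish the standard Fisher identity
\[
E_{P(z\mid x)}\bigl[\nabla^{2}\log P(z\mid x)\bigr]
=-E_{P(z\mid x)}\bigl[\nabla\log P(z\mid x)\,\nabla\log P(z\mid x)^{T}\bigr]
\]
by differentiating the normalization $\sum_z P(z\mid x)=1$. The paper packages this via the algebraic identity $\nabla^{2}\log P=\nabla^{2}P/P-(\nabla\log P)(\nabla\log P)^{T}$ and then uses $E[\nabla^{2}P/P]=\nabla^{2}\sum_z P=0$; you instead differentiate the score-mean-zero relation. These are the two textbook phrasings of the same computation.

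Where you are actually \emph{more} careful than the paper is in the final substitution. The paper jumps directly from $-E[(\nabla\log P)(\nabla\log P)^{T}]$ to $-E[\nabla f_z\,\nabla f_z^{T}]=-F$, silently replacing $\nabla\log P$ by $-\nabla f_z$ and thereby dropping the $\nabla\log Z(x)$ contribution without comment. Your observation that this step is an exact equality only when $E_{P(z\mid x)}[\nabla f_z]=0$, and otherwise picks up the rank-one term $E[\nabla f_z]\,E[\nabla f_z]^{T}$, is correct and is a point the paper's proof simply elides. Since the lemma is used only heuristically in the surrounding remark (near saddle points where the mean gradient is small), the paper evidently treats this correction as negligible; your plan to make the approximation explicit is the cleaner option.
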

\begin{proof}
	\begin{align*}
	\nabla^2 \log( P( z|x))&=\frac{\nabla^2 P( z|x) P ( z| x ) - \nabla P( z|x) \nabla P( z|x)^T }{P( z|x) P ( z| x )} \\
	&=\frac{\nabla^2 P( z|x) P ( z| x )}{P( z|x) P ( z| x )^T} -\frac{ \nabla P( z|x) \nabla P( z|x)^T }{P( z|x) P ( z| x )}\\
	&=\frac{\nabla^2 P( z|x) }{P( z|x) } -\frac{ \nabla P( z|x)  }{P( z|x)} (\frac{ \nabla P( z|x)  }{P( z|x)})^T.\\
	\end{align*}
	
	Therefore:
	\begin{align*}
	&E_{P(z|x)}[\nabla^2 \log( P( z|x))]\\
	&=E_{P(z|x)}[\frac{\nabla^2 P( z|x) }{P( z|x) }] -E_{P(z|x)}[\frac{ \nabla P( z|x)  }{P( z|x)} (\frac{ \nabla P( z|x)  }{P( z|x)})^T]\\
	%&= \int P(z|x)\cdot\frac{\nabla^2 P( z|x) }{P( z|x) }] dz -E_{P(z|x)}[\nabla log(P( z|x) ) (\nabla log(P( z|x) ))^T]\\
	&= -E_{P(z|x)}[\nabla log(P( z|x) ) (\nabla log(P( z|x) ))^T]\\
	&=-E_{P(z|x)}[\nabla f_z( x) f_z( x)^T]\\
	&=-F
	\end{align*}
	where $F$ is Fisher information matrix.
\end{proof}
%In practice, with assumptions that the sample size $n$ is large enough and current parameter $x$ are not far away from the ground truth parameter, we can get:
%\begin{align*} \label{fisher_hessian}
%F_E( x ) \approx F( x) &= -E_{P(z|x)}[\nabla^2 \log( P( z|x))]\\
%& =E_{P(z|x)}[\nabla^2 f_z( x)] \approx H( x ) 
%\end{align*}
%where $H(x) = \nabla^2 f( x)$ is Hessian matrix of function $f(\cdot)$ at x.
	Around saddle points, the variance-covariance matrix of the stochastic gradient $\nabla f_z( x)$ is approximately equal to the empirical Fisher's information matrix (see Appendix). In practice, under the assumptions that the sample size $n$ is large enough and the current $x$ is not too far away from the ground truth parameter, Fisher's information matrix $E_{P(z|x)}[\nabla^2 \log( P( z|x))]$ approximately equals to the Hessian matrix $H$ of $f(\cdot)$ at $x$. Thus, the intrinsic noise in SGD is naturally embedded into Hessian information. This explains the anisotropic behavior of the noise in SGD.
\end{document}